\newtheorem{theorem}{Theorem}[section]
\newtheorem{definition}[theorem]{Definition}
\newtheorem{proposition}[theorem]{Proposition}
\newtheorem{lemma}[theorem]{Lemma}
\newtheorem{corollary}[theorem]{Corollary}
\newtheorem{remark}[theorem]{Remark}
\newcommand{\sezione}[1]{\section{#1}\setcounter{equation}{0}}
\newcommand{\nor}{\Arrowvert}
\def\R{{\rm I\mskip -3.5mu R}}
\def\N{{\rm I\mskip -3.5mu N}}
\def\cS{{\mathcal{S}}}
\def\cH{{\mathcal{H}}}
\def\cSn{{\mathcal{S}(n)}}
\def\e{{\varepsilon}}
\def\di12{\mathcal{D}^{1,2}(\R^n)}
\def\na{\nabla}
\def\d{\delta}
\def\g{\gamma}
\def\D{\Delta}
\def\l{{\lambda}}
\def\L{{\Lambda}}
\def\a{{\alpha}}
\def\b{{\beta}}
\def\o{\omega}
\def\0l{_{0,\l}}
\def\1l{_{1,\l}}
\def\2l{_{2,\l}}
\def\3l{_{3,\l}}
\def\4l{_{4,\l}}
\def\de{\partial}
\def\Om{\Omega}
\def\c{{\gamma}}
\begin{document}
\title{
Nonradial solutions  for the H\'enon equation in $\R^N$
\thanks{The first two authors are supported by PRIN-2009-WRJ3W7 grant,
S. Neves has been partially supported by FAPESP.}}
\author{Francesca Gladiali  \thanks{Dipartimento Polcoming-Matematica e Fisica, Universit\`a di
Sassari,via Piandanna 4 -07100 Sassari, e-mail {\sf
fgladiali@uniss.it}.} \and Massimo Grossi\thanks{Dipartimento di Matematica, Universit\`a di Roma
La Sapienza, P.le A. Moro 2 - 00185 Roma, e-mail {\sf
massimo.grossi@uniroma1.it}.} \and S\'ergio L. N. Neves \thanks{Departamento de Matem\'atica, Universidade Estadual de Campinas - IMECC,
Rua S\'ergio Buarque de Holanda 651, Campinas-SP 13083-859, Brazil,
e-mail {\sf sergio184@gmail.com}}}
\maketitle
 \begin{abstract}
In this paper we consider the problem
$$
\left\{\begin{array}{ll}
-\Delta u=(N+\a)(N-2)|x|^{\a}u^\frac{N+2+2\a}{N-2}  & \hbox{ in }\R^N\\
u>0& \hbox{ in }\R^N\\
u\in D^{1,2}\left(\R^N\right)
\end{array}\right.
$$
where $N\ge3$. From the characterization of the solutions of the linearized operator, we deduce the existence of nonradial solutions which bifurcate from the radial one when $\alpha$ is an even integer.

 \end{abstract}

\tableofcontents

\sezione{Introduction and statement of the main results}
We consider the problem
\begin{equation}\label{1}
\left\{\begin{array}{ll}
-\Delta u=C(\a)|x|^{\a}u^{p_\a}  & \hbox{ in }\R^N\\
u>0& \hbox{ in }\R^N\\
u\in D^{1,2}\left(\R^N\right)
\end{array}\right.
\end{equation}
where $N\geq 3$, $\a> 0$, $p_\a=\frac{N+2+2\a}{N-2}$,
$C(\a)=(N+\a)(N-2)$,
$D^{1,2}\left(\R^N\right)=\{u\in L^{2^*}(\R^N)\hbox{ such that } |\na
u| \in L^2(\R^N)\}$ and $2^*=\frac{2N}{N-2}$.\\
This problem, for $\alpha>0$, generalizes the well-known equation which involves the critical Sobolev exponent
\begin{equation}\label{i2}
\left\{\begin{array}{ll}
-\Delta u=N(N-2)u^\frac{N+2}{N-2}  & \hbox{ in }\R^N\\
u>0& \hbox{ in }\R^N.
\end{array}\right.
\end{equation}
Smooth solutions to \eqref{i2} have been completely classified in  \cite{CGS89}, where the authors proved that they are given by
\begin{equation}\label{i3}
U_{\l,y}(x)=\frac {\l^{\frac{N-2}2}}{\left(1+\l^{2}|x-y|^2\right)^\frac{N-2}2}
\end{equation}
with $\l>0$ and $y\in\R^N$
and they are extremal functions for the well-known Sobolev inequality,
\begin{equation}\label{i3a}
\int_{\R^N}|\nabla u|^2\ge S\left(\int_{\R^N}|u|^\frac{2N}{N-2}\right)^\frac{N-2}N.
\end{equation}
The presence of the term $|x|^{\a}$ in equation \eqref{1} drastically changes the problem. For this kind of nonlinearities it is not possible to apply the moving plane method anymore (to get the radial symmetry around some point), and indeed nonradial solutions appear, as we will see in Theorem \ref{i17}. This phenomenon has brought attention to the H\'enon problem, i.e. \eqref{1} or
\begin{equation}\label{i5}
\left\{\begin{array}{ll}
-\Delta u=C(\a)|x|^{\a}u^{p}  & \hbox{ in }B_1\\
u>0& \hbox{ in }B_1\\
u=0& \hbox{ on }\partial B_1
\end{array}\right.
\end{equation}
where $B_1$ is the unit ball  of $\R^N$, $N\ge3$, and $p>1$.
Problem  \eqref{i5} was widely studied, mainly in the subcritical range $1<p<\frac{N+2}{N-2}$ (see for example  \cite{SSW02},  \cite{PS07}
 and the references therein) where the existence of nonradial solutions was observed. The only result in the full range $(1,\frac{N+2+2\alpha}{N-2})$ is the following one (see W. M. Ni,  \cite{N82}) ,
 \begin{theorem}\label{i4}
Let $B_1$ be the unit ball of $\R^N$, $N\ge3$. Then, for any $1<p<\frac{N+2+2\alpha}{N-2}$ there exists a radial solution to the  problem \eqref{i5}.
Moreover, if $p\ge\frac{N+2+2\alpha}{N-2}$ there exists no solution to \eqref{i5}.
\end{theorem}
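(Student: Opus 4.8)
The plan is to prove the two assertions separately. \emph{Existence} will be obtained by a constrained minimization in the subspace $H^1_{0,r}(B_1)$ of radial functions of $H^1_0(B_1)$, and the decisive point — the reason the critical exponent is raised from $\frac{N+2}{N-2}$ to $\frac{N+2+2\alpha}{N-2}$ — is that, for $N\ge3$, $\alpha>0$ and $0<q<\frac{2(N+\alpha)}{N-2}$, the embedding $H^1_{0,r}(B_1)\hookrightarrow L^{q}(B_1;|x|^\alpha\,dx)$ is continuous and compact. I would deduce this from the radial decay estimate
\[
|u(x)|\le C_N\,|x|^{-\frac{N-2}{2}}\,\|\nabla u\|_{L^2(B_1)},\qquad u\in H^1_{0,r}(B_1),
\]
which follows by writing $u(r)=-\int_r^1 u'(s)\,ds$ and applying Cauchy--Schwarz against the measure $s^{N-1}ds$. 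Given a bounded sequence $u_k\rightharpoonup u$ in $H^1_{0,r}(B_1)$, on each annulus $B_1\setminus B_\rho$ the $u_k$ are uniformly bounded in $L^\infty$ by the decay estimate and converge in $L^2$ by Rellich, hence in $L^q$; while on $B_\rho$ the same estimate gives $\int_{B_\rho}|x|^\alpha|u_k-u|^{q}\,dx\le C\int_{B_\rho}|x|^{\alpha-\frac{q(N-2)}{2}}\,dx$, whose exponent satisfies $\alpha-\frac{q(N-2)}{2}+N>0$ exactly when $q<\frac{2(N+\alpha)}{N-2}$, so this term is small uniformly in $k$ as $\rho\to0$. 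Together these give strong convergence in $L^q(B_1;|x|^\alpha dx)$.

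With this in hand, choose $q=p+1\in(2,\frac{2(N+\alpha)}{N-2})$, i.e. $1<p<\frac{N+2+2\alpha}{N-2}$, and set
\[
S_\alpha=\inf\Big\{\textstyle\int_{B_1}|\nabla u|^2:\ u\in H^1_{0,r}(B_1),\ \int_{B_1}|x|^\alpha|u|^{p+1}=1\Big\}>0.
\]
A minimizing sequence is bounded in $H^1_0$, so it converges weakly in $H^1_{0,r}(B_1)$ and, by the compact embedding, strongly in $L^{p+1}(B_1;|x|^\alpha dx)$; hence $S_\alpha$ is attained at some $u\ge0$ (replacing $u$ by $|u|$). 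As a constrained minimizer $u$ satisfies, weakly, the radial equation $-(r^{N-1}u')'=\mu\,r^{N-1+\alpha}u^p$ for some $\mu>0$, and standard elliptic regularity — with the right-hand side controlled in $L^q$ through the decay estimate, which for $p<\frac{N+2+2\alpha}{N-2}$ is exactly what lets the bootstrap close — upgrades $u$ to a classical solution of $-\Delta u=\mu|x|^\alpha u^p$ in $B_1$. Rescaling $u\mapsto(\mu/C(\alpha))^{1/(p-1)}u$ produces a solution of \eqref{i5}, which is positive in $B_1$ by the strong maximum principle and vanishes on $\partial B_1$ since $u\in H^1_0(B_1)$.

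\emph{Nonexistence.} Let $u$ be a classical positive solution of \eqref{i5} for some $p\ge\frac{N+2+2\alpha}{N-2}$. Multiplying the equation by $u$ and integrating gives $\int_{B_1}|\nabla u|^2=C(\alpha)\int_{B_1}|x|^\alpha u^{p+1}$. Multiplying instead by $x\cdot\nabla u$ and integrating by parts — using that $u=0$, hence $\nabla u=(\partial_\nu u)\nu$, on $\partial B_1$, that $x\cdot\nu\equiv1$ there, and $\nabla\cdot(|x|^\alpha x)=(N+\alpha)|x|^\alpha$ — yields the Pohozaev identity
\begin{equation*}
\Big(\frac{N+\alpha}{p+1}-\frac{N-2}{2}\Big)\,C(\alpha)\int_{B_1}|x|^\alpha u^{p+1}=\frac12\int_{\partial B_1}\Big(\frac{\partial u}{\partial\nu}\Big)^2 d\sigma.
\end{equation*}
For $p+1=\frac{2(N+\alpha)}{N-2}$ the left-hand side vanishes, and for $p+1>\frac{2(N+\alpha)}{N-2}$ it is negative, while the right-hand side is strictly positive by Hopf's lemma; this contradiction shows that \eqref{i5} has no solution when $p\ge\frac{N+2+2\alpha}{N-2}$.

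The main obstacle is the compactness of the weighted radial embedding: it is the only genuinely new ingredient compared with the classical pure--power case, and it is precisely the decay of radial functions that shifts the threshold to $\frac{N+2+2\alpha}{N-2}$. Once it is available, the variational existence argument and the regularity bootstrap (somewhat delicate because $p$ may exceed $\frac{N+2}{N-2}$) are routine, and the Pohozaev step is a direct computation.
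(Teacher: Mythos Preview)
Your argument is correct. For existence you follow essentially Ni's original route: establish the compact embedding $H^1_{0,r}(B_1)\hookrightarrow L^{p+1}(B_1;|x|^\alpha dx)$ directly, via the Strauss-type pointwise decay $|u(r)|\le C\,r^{-(N-2)/2}\|\nabla u\|_{L^2}$, and then minimize under constraint. The paper's own proof in the Appendix (Theorem~\ref{a5}) takes a genuinely different path: it performs the change of variables $r=s^{2/(\alpha+2)}$, $v(s)=u(s^{2/(\alpha+2)})$, which transforms the radial ODE into
\[
-v''-\frac{M-1}{s}\,v'=\frac{4}{(2+\alpha)^2}\,v^p,\qquad M=\frac{2(N+\alpha)}{2+\alpha},
\]
and the subcritical range $p<\frac{N+2+2\alpha}{N-2}$ becomes exactly $p<\frac{M+2}{M-2}$. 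Existence is then the classical subcritical compactness in ``dimension'' $M$, and one obtains \emph{uniqueness} of the radial solution for free by invoking the Gidas--Ni--Nirenberg ODE uniqueness result for the transformed equation. Your direct approach is self-contained and makes the role of the radial decay transparent, and the ODE bootstrap you allude to does close precisely when $p<\frac{N+2+2\alpha}{N-2}$ (each iteration improves the singularity exponent by the fixed amount $(2+\alpha)-(p-1)\frac{N-2}{2}>0$). The paper's transformation is shorter once that trick is in hand and delivers uniqueness as a bonus, which your argument does not address. For nonexistence your Pohozaev computation is the standard one; the paper's Appendix does not revisit that half of the statement.
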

Coming back to  \eqref{1}, we quote the following result by E. Lieb (\cite{L83}), which, in the radial case, extends the inequality  \eqref{i3a}.
\begin{theorem}\label{i6}
 Let $u\in D^{1,2}\left(\R^N\right)$ be a radial function. Then we have that,
\begin{equation}\label{i7}
\int_{\R^N}|\nabla u|^2\ge S(\alpha)\left(\int_{\R^N}|x|^\alpha|u|^\frac{2N+2\alpha}{N-2}\right)^\frac{N-2}N,
\end{equation}
for some positive constant $S(\a)$. Moreover the extremal functions which achieve $S(\alpha)$ are solutions to \eqref{1} and are given by
\begin{equation}\label{i8}
U_{\l,\a}(x)=\frac {\l^{\frac{N-2}2}}{\left(1+\l^{2+\a}|x|^{2+\a}\right)^{\frac{N-2}{2+\a}}}
\end{equation}
with $\l>0$.
\end{theorem}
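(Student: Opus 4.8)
\medskip
\noindent\textbf{Proof strategy.} The plan is to use the radial symmetry to turn \eqref{i7} into a one-dimensional inequality, and then, via an Emden--Fowler type change of variables, to recognize that inequality as the classical Sobolev (Bliss) inequality for a---generally non-integer---dimension parameter $M$. First, write $u(x)=v(|x|)$ and pass to polar coordinates: setting $q:=\frac{2N+2\alpha}{N-2}$,
\[
\int_{\R^N}|\nabla u|^2=\omega_{N-1}\int_0^\infty|v'(r)|^2\,r^{N-1}\,dr,\qquad
\int_{\R^N}|x|^\alpha|u|^q=\omega_{N-1}\int_0^\infty|v(r)|^q\,r^{N-1+\alpha}\,dr,
\]
with $\omega_{N-1}=|\partial B_1|$. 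So it suffices to identify the best constant, and the minimizers, of the corresponding weighted one-dimensional inequality for $v$ on $(0,\infty)$.

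Next, perform the substitution $t=r^{\frac{2+\alpha}{2}}$, $w(t)=v(r)$. A direct computation gives
\[
\int_0^\infty|v'|^2\,r^{N-1}\,dr=\frac{2+\alpha}{2}\int_0^\infty|w'|^2\,t^{M-1}\,dt,\qquad
\int_0^\infty|v|^q\,r^{N-1+\alpha}\,dr=\frac{2}{2+\alpha}\int_0^\infty|w|^q\,t^{M-1}\,dt,
\]
where $M:=\frac{2(N+\alpha)}{2+\alpha}>2$, and one checks that $q=\frac{2M}{M-2}$, i.e. $q$ is exactly the critical exponent for the dimension $M$. Thus the one-dimensional inequality becomes the radial Sobolev inequality in dimension $M$,
\[
\int_0^\infty|w'|^2\,t^{M-1}\,dt\ \ge\ c_M\left(\int_0^\infty|w|^{\frac{2M}{M-2}}\,t^{M-1}\,dt\right)^{\!\frac{M-2}{M}},\qquad \frac{M-2}{M}=\frac{N-2}{N+\alpha}=\frac{2}{q}.
\]

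Then I would invoke the classical inequality of Bliss, which is precisely this estimate for every real $M>2$, with sharp constant $c_M$ attained exactly by the functions $w(t)=a\,(1+b\,t^2)^{-\frac{M-2}{2}}$, $a\in\R$, $b>0$. If a self-contained proof is preferred, one first reduces to nonincreasing $w$ (rearrangement preserves $\int_0^\infty|w|^q t^{M-1}\,dt$ and does not increase $\int_0^\infty|w'|^2 t^{M-1}\,dt$), and then analyses the Euler--Lagrange ODE $\big(t^{M-1}w'\big)'+t^{M-1}w^{\frac{M+2}{M-2}}=0$ on $(0,\infty)$ under the finite-energy constraint, showing that its only positive solutions are the functions above; this ODE analysis, together with the attached uniqueness statement, is the step I expect to be the main obstacle.

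Finally, undo the substitution: $v(r)=a\,(1+b\,r^{2+\alpha})^{-\frac{M-2}{2}}$, and since $\frac{M-2}{2}=\frac{N-2}{2+\alpha}$ this is, up to the harmless positive factor $a$, the two-parameter family $\big(1+b\,r^{2+\alpha}\big)^{-\frac{N-2}{2+\alpha}}$; collecting the constants produced by the two changes of variables gives the explicit value of $S(\alpha)$. Moreover every minimizer of \eqref{i7} satisfies its Euler--Lagrange equation $-\Delta u=\mu\,|x|^\alpha u^{q-1}$ with $q-1=\frac{N+2+2\alpha}{N-2}=p_\alpha$, and transporting the identity $-\Delta_M U=M(M-2)\,U^{\frac{M+2}{M-2}}$ (valid for $U(t)=(1+t^2)^{-\frac{M-2}{2}}$, where $\Delta_M w=w''+\frac{M-1}{t}\,w'$) back through the substitution yields $\mu=\big(\tfrac{2+\alpha}{2}\big)^{2}M(M-2)=(N+\alpha)(N-2)=C(\alpha)$. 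Hence, after normalizing $u$ to solve \eqref{1} (which pins $a=\lambda^{\frac{N-2}{2}}$, $b=\lambda^{2+\alpha}$), the extremals are exactly the functions $U_{\lambda,\alpha}$ of \eqref{i8}, and they are solutions of \eqref{1}, as claimed.
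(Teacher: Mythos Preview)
Your approach is correct and is essentially identical to the paper's own proof (given in the Appendix as Theorem~A.1): both reduce the radial inequality via the change of variables $t=r^{\frac{2+\alpha}{2}}$ to the one-dimensional Sobolev inequality in the (possibly non-integer) ``dimension'' $M=\frac{2(N+\alpha)}{2+\alpha}$, then invoke the sharp inequality with its known extremals---you cite Bliss, the paper cites Talenti~\cite{TA76}, Lemma~2---and transform back. The only additions in your write-up are the explicit verification that the extremals satisfy \eqref{1} with constant $C(\alpha)$, and the outlined self-contained ODE argument; the paper omits the former and simply quotes Talenti for the latter.
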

In (\cite{GS81}) it was proved that the functions in \eqref{i8} are the $unique$ radial solutions to \eqref{1}.
We will call $U_{\a}$ the unique radial solution of \eqref{1}, related to the exponent $\a$, such that $U_{\a}(0)=1$, i.e.
\begin{equation}
\label{i9}
U_{\a}(x)=\frac 1{\left(1+|x|^{2+\a}\right)^\frac{N-2}{2+\a}}.
\end{equation}
In this paper we are interested in the existence of nonradial solutions for problem  \eqref{1}. This problem is quite difficult because, in this case, there is no embedding of the space $D^{1,2}\left(\R^N\right)$ in $L^\frac{2N+2\a}{N-2}\left(\R^N\right)$. So the standard variational methods can not be applied. To overcome this problem we will use the {\em bifurcation theory}.\\
Our first result concerns the study of the linearized problem related to \eqref{1} at the function $U_{\a}$. This leads to study the problem,
\begin{equation}\label{1.4}
\left\{\begin{array}{ll}
-\Delta v=C(\a)p_\a|x|^{\a}U_{\a}^{p_\a-1}v  & \hbox{ in }\R^N\\
v\in D^{1,2}\left(\R^N\right).
\end{array}\right.
\end{equation}
Next theorem characterizes all the solutions to \eqref{1.4}.
\begin{theorem} \label{linearized}
Let $\a \geq 0$. If $\a > 0$ is not an even integer, then the space of solutions of \eqref{1.4} has dimension $1$ and is spanned by
\begin{equation} \label{i12}
Z(x)=\frac{1-|x|^{2+\a}}{(1+|x|^{2+\a})^{\frac{N+\a}{2+\a}}}.
\end{equation}
If $\a = 2(k-1)$ for some $k \in \N$ then the space of solutions of  \eqref{1.4} has dimension $1 + \frac{(N+2k-2)(N+k-3)!}{(N-2)!k!}$
and is spanned by the functions
\begin{equation} \label{i13}
Z(x)=\frac{1-|x|^{2+\a}}{(1+|x|^{2+\a})^{\frac{N+\a}{2+\a}}} \,\, , \,\, Z_k(x)=\frac{Y_k(x)}{(1+|x|^{2+\a})^{\frac{N+\a}{2+\a}}}
\end{equation}
where $Y_k$ form a basis of $\mathbb{Y}_k(\R^N)$, the space of all homogeneous harmonic polynomials of degree $k$ in $\R^N$.
\end{theorem}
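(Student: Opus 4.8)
The plan is to reduce \eqref{1.4} to a family of ordinary differential equations by separation of variables and then to integrate each of them in closed form through a hypergeometric substitution. After a standard regularity step, any $v$ solving \eqref{1.4} can be written as $v(x)=\sum_{k\ge 0}\sum_j \varphi_{k,j}(|x|)\,Y_{k,j}(x/|x|)$, where $\{Y_{k,j}\}_j$ is an $L^2(S^{N-1})$-orthonormal basis of the spherical harmonics of degree $k$ (eigenvalue $k(k+N-2)$ of $-\Delta_{S^{N-1}}$), so that $\|v\|_{D^{1,2}}^2=\sum_{k,j}\int_0^\infty\big(|\varphi_{k,j}'|^2+k(k+N-2)r^{-2}|\varphi_{k,j}|^2\big)r^{N-1}\,dr$. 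Since $U_\a^{p_\a-1}=(1+r^{2+\a})^{-2}$, each profile $\varphi=\varphi_{k,j}$ must solve
\begin{equation}\label{eq-mode}
\varphi''+\frac{N-1}{r}\varphi'-\frac{k(k+N-2)}{r^2}\varphi+\frac{C(\a)p_\a\,r^\a}{(1+r^{2+\a})^2}\,\varphi=0,
\end{equation}
and $v\in D^{1,2}(\R^N)$ forces every term of the sum above to be finite. Both singular points of \eqref{eq-mode} are regular, and the potential is $o(r^{-2})$ at $0$ and at $\infty$; hence a Frobenius/comparison analysis gives indicial exponents $\{k,\,2-N-k\}$ at $r=0$ and, after $r\mapsto 1/r$, again $\{k,\,2-N-k\}$ at $r=\infty$. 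A direct estimate of the Dirichlet integral shows that finiteness selects the exponent $k$ at $0$ and the exponent $2-N-k$ at $\infty$. Consequently, for each fixed $k$ the space of admissible profiles has dimension at most $1$, and it equals $1$ exactly when the solution of \eqref{eq-mode} behaving like $r^k$ near $0$ also behaves like $r^{2-N-k}$ near $\infty$; I call this the resonance condition at mode $k$.

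To decide resonance I integrate \eqref{eq-mode}. Writing $\varphi=r^k w$ cancels the $k(k+N-2)r^{-2}$ term and turns \eqref{eq-mode} into $w''+\frac{N+2k-1}{r}w'+C(\a)p_\a r^\a(1+r^{2+\a})^{-2}w=0$, the radial equation ``in dimension $N+2k$''. The change of variable $s=\dfrac{r^{2+\a}}{1+r^{2+\a}}\in(0,1)$ (with $r=0\leftrightarrow s=0$ and $r=\infty\leftrightarrow s=1$) then reduces this to the hypergeometric equation
\begin{equation}\label{eq-hyper}
s(1-s)\,w_{ss}+(\beta_k-2s)\,w_s+\gamma(\gamma+1)\,w=0,\qquad \gamma=\frac{N+\a}{2+\a},\ \ \beta_k=\frac{N+2k+\a}{2+\a},
\end{equation}
whose parameters are $a=-\gamma$, $b=\gamma+1$, $c=\beta_k$ (since $a+b=1$ and $-ab=\gamma(\gamma+1)$). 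Under this substitution the exponent $r^k$ at $0$ corresponds to the solution analytic at $s=0$, namely ${}_2F_1(-\gamma,\gamma+1;\beta_k;s)$, while the exponent $r^{2-N-k}$ at $\infty$ corresponds to the solution behaving like $(1-s)^{\beta_k-1}$ at $s=1$ (note $\beta_k-1=\frac{N+2k-2}{2+\a}>0$). Thus resonance at mode $k$ holds precisely when ${}_2F_1(-\gamma,\gamma+1;\beta_k;s)$ has no component along the solution of \eqref{eq-hyper} that is analytic at $s=1$.

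By the Gauss connection formula, that component is a nonzero multiple of $\dfrac{\Gamma(\beta_k)\Gamma(\beta_k-1)}{\Gamma(\beta_k+\gamma)\Gamma(\beta_k-\gamma-1)}$, so it vanishes if and only if $\beta_k+\gamma$ or $\beta_k-\gamma-1$ is a non-positive integer. Now $\beta_k+\gamma=\dfrac{2(N+\a+k)}{2+\a}>0$, so the only possibility is $\beta_k-\gamma-1=\dfrac{2k-2-\a}{2+\a}\in\{0,-1,-2,\dots\}$, i.e. $2(k-1+m)+\a(m-1)=0$ for some integer $m\ge0$; for $\a\ge0$ this happens exactly for $k=0$ (for every $\a$, taking $m=1$) or for $k\ge1$ with $\a=2(k-1)$ (taking $m=0$). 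Hence mode $0$ always resonates, mode $k\ge1$ resonates if and only if $\a=2(k-1)$, and no other mode contributes anything.

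Finally I assemble the statement. For $k=0$ the admissible profile yields the function $Z$ of \eqref{i12} (that $Z$ solves \eqref{1.4} is checked directly, e.g. $Z$ is a constant multiple of $\partial_\l U_{\l,\a}|_{\l=1}$), which gives the ``$1$'' in both dimension formulas. If $\a$ is not an even integer, then $\a=2(k-1)$ fails for every $k\ge1$, so no higher mode resonates and the solution space of \eqref{1.4} is exactly $\mathrm{span}\{Z\}$. If $\a=2(k-1)$ for some $k\in\N$, then among the modes $\ge1$ only mode $k$ resonates, and there one verifies by direct substitution that $w(r)=(1+r^{2+\a})^{-\gamma}$ solves the reduced equation; therefore the corresponding solutions of \eqref{1.4} are exactly $Y_k(x)(1+|x|^{2+\a})^{-\gamma}$ with $Y_k\in\mathbb{Y}_k(\R^N)$, adding $\dim\mathbb{Y}_k(\R^N)=\dfrac{(N+2k-2)(N+k-3)!}{(N-2)!\,k!}$ dimensions (the classical count of homogeneous harmonic polynomials of degree $k$). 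The step requiring the most care is the application of the connection formula in the degenerate configurations — when $\gamma\in\N\cup\{0\}$, so that ${}_2F_1$ terminates to a polynomial, or when $\beta_k-1\in\N$, so that logarithms appear at $s=1$ — which must be treated through the appropriate limiting forms of Gauss' identity; however the explicit solutions $Z$ and $Y_k(x)(1+|x|^{2+\a})^{-\gamma}$ already establish the existence part of the theorem, so only the uniqueness part genuinely depends on this bookkeeping.
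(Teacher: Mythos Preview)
Your argument is correct and complete in substance, but it takes a genuinely different route from the paper. After the common step of separating into spherical-harmonic modes, the paper does not pass to a hypergeometric equation at all: it applies the single change of variable $\eta_k(r)=\psi_k(r^{2/(2+\a)})$, which turns the radial equation for each mode into
\[
-\eta_k''-\frac{M-1}{r}\eta_k'+\frac{4\mu_k}{(2+\a)^2}\frac{\eta_k}{r^2}=\frac{M(M+2)}{(1+r^2)^2}\,\eta_k,\qquad M=\frac{2(N+\a)}{2+\a}.
\]
This is precisely the linearized critical Sobolev problem in (real, possibly noninteger) dimension $M$, viewed as a singular Sturm--Liouville problem in the parameter $\beta=4\mu_k/(2+\a)^2$. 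The first two eigenvalues and eigenfunctions of that problem are known explicitly --- $\beta_1=M-1$ with $r/(1+r^2)^{M/2}$ and $\beta_2=0$ with $(1-r^2)/(1+r^2)^{M/2}$ --- and since these are the only two nonnegative eigenvalues one reads off immediately that a nontrivial $\psi_k$ exists iff $4\mu_k/(2+\a)^2\in\{0,M-1\}$, i.e.\ $k=0$ or $\a=2(k-1)$. The paper's proof is therefore essentially two lines once the substitution is in hand, and it makes the link to the classical $\a=0$ case transparent. Your reduction via $\varphi=r^kw$ and $s=r^{2+\a}/(1+r^{2+\a})$ to ${}_2F_1(-\gamma,\gamma+1;\beta_k;s)$, followed by Gauss's summation ${}_2F_1(a,b;c;1)=\Gamma(c)\Gamma(c-a-b)/(\Gamma(c-a)\Gamma(c-b))$, arrives at exactly the same dichotomy but through heavier special-function machinery; on the other hand it is self-contained (it does not borrow the $\a=0$ classification) and gives the resonance criterion in closed algebraic form. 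One small imprecision worth noting: for $k=0$ the gradient integral alone does \emph{not} rule out the exponent $0$ at infinity (both Frobenius branches there have $\int_1^\infty r^{N-1}|\varphi'|^2<\infty$); it is the Sobolev embedding $D^{1,2}\hookrightarrow L^{2^*}$ that excludes a nonzero limit at infinity. This does not affect your dimension count, since the condition at $r=0$ already cuts the $k=0$ space to one dimension and your Gauss computation then confirms that this solution is $Z$.
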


We note that in the case $\alpha=0$ we get $k = 1$ and one gets
back the known result for the equation involving the critical
Sobolev exponent. We observe that for all $\alpha>0$ the problem
\eqref{1} is invariant for dilations but not for translations.
Theorem \ref{linearized} highlights the new phenomenon that if $\alpha$
is an even integer then there exist new solutions to \eqref{1.4} that
``replace" the ones due to the translations invariance.
It would be very interesting to understand if these new solutions are given by some geometrical invariants of the problem or not.\\
The key step of the proof is the change of variables $r\mapsto r^\frac2{\alpha+2}$. In this way the problem  \eqref{1.4} leads back, in a suitable sense, to the well-known case $\alpha = 0$, where there is a complete characterization of the solutions.\\
We emphasize that the transformation $r\mapsto r^\frac2{\alpha+2}$, which was used in \cite{CG10} in a different context, allows to prove in an easy way some known results.\\
The first example is a new (and in our opinion very simple) proof of the inequality \eqref{i7}, which also provides the uniqueness result due to Gidas and Spruck in
\cite{GS81}. The second one is a new proof of Theorem \ref{i4} jointly with the uniqueness of the radial solution (this last result was proved in \cite{NN85}). Both proofs are given in the Appendix.\\
A first consequence of Theorem \ref{linearized} is the computation of the Morse index of the solution $U_{\a}$.
\begin{corollary}\label{cor-1}
Let $U_{\a}$ be the solution of \eqref{1}, then its Morse index $m(\a)$ is equal to
$$m(\a) = \sum_{0 \leq  k<\frac{\a+2}{2}\atop_{k\  integer}} \frac{(N+2k-2)(N+k-3)!}{(N-2)!\,k!}.$$
In particular, we have that the Morse index of $U_{\a}$ changes as $\a$ crosses the even integers and also that $m(\a) \to+\infty$ as $\a \to+\infty.$
\end{corollary}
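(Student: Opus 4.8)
The plan is to separate variables in the quadratic form attached to the linearised operator, so that $m(\a)$ becomes a sum over the spherical harmonic degrees $k$ of the Morse indices of one--dimensional weighted problems; and then to identify each of those problems, through the substitution $r\mapsto r^{2/(\a+2)}$ already used to prove Theorem \ref{linearized}, with the radial linearisation of the critical equation \eqref{i2} in a (generally non--integer) ``dimension'' $M(\a)=\frac{2(N+\a)}{\a+2}$, whose spectral picture is classical.

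Concretely, $m(\a)$ is the largest dimension of a subspace of $D^{1,2}\left(\R^N\right)$ on which $Q_\a(v):=\int_{\R^N}|\na v|^2-C(\a)p_\a\int_{\R^N}|x|^\a U_\a^{p_\a-1}v^2$ is negative definite. Writing $v(x)=\sum_{k\ge 0}\sum_{j}\psi_{k,j}(|x|)\,Y_{k,j}(x/|x|)$ with $\{Y_{k,j}\}_{j=1}^{N_k}$ an orthonormal basis of $\mathbb{Y}_k(\R^N)$, of dimension $N_k=\frac{(N+2k-2)(N+k-3)!}{(N-2)!\,k!}$, and $-\D_{S^{N-1}}Y_{k,j}=k(k+N-2)Y_{k,j}$, the radial potential makes $Q_\a$ block diagonal, $Q_\a(v)=\sum_k\sum_j Q_{k,\a}(\psi_{k,j})$, where
\[ Q_{k,\a}(\psi):=\int_0^\infty\Big((\psi')^2+\tfrac{k(k+N-2)}{r^2}\psi^2\Big)r^{N-1}\,dr-C(\a)p_\a\int_0^\infty r^\a U_\a^{p_\a-1}\psi^2\,r^{N-1}\,dr . \]
Hence $m(\a)=\sum_{k\ge 0}N_k\,m_k(\a)$, where $m_k(\a)$ is the number of negative eigenvalues of $Q_{k,\a}$; in particular finiteness of $m(\a)$ will be a by--product of showing that $m_k(\a)=0$ for all large $k$.

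Next I would set $t=r^{(\a+2)/2}$ and $\psi(r)=\phi(t)$. Using that $U_\a^{p_\a-1}=(1+t^2)^{-2}$, that the Jacobian converts $\int_0^\infty(\psi')^2 r^{N-1}dr$, $\int_0^\infty\psi^2 r^{N-3}dr$ and $\int_0^\infty r^\a U_\a^{p_\a-1}\psi^2 r^{N-1}dr$ into $\frac{\a+2}{2}\int_0^\infty(\phi')^2 t^{M-1}dt$, $\frac{2}{\a+2}\int_0^\infty\phi^2 t^{M-3}dt$ and $\frac{2}{\a+2}\int_0^\infty(1+t^2)^{-2}\phi^2 t^{M-1}dt$ respectively, and that $\frac{4}{(\a+2)^2}C(\a)p_\a=M(M+2)$ for $M=M(\a)=\frac{2(N+\a)}{\a+2}$, one finds, with $\ell_k=\frac{4k(k+N-2)}{(\a+2)^2}$,
\[ Q_{k,\a}(\psi)=\frac{\a+2}{2}\left[\int_0^\infty\Big((\phi')^2+\tfrac{\ell_k}{t^2}\phi^2\Big)t^{M-1}\,dt-M(M+2)\int_0^\infty(1+t^2)^{-2}\phi^2\,t^{M-1}\,dt\right]. \]
That is, up to the positive factor $\frac{\a+2}{2}$, $Q_{k,\a}$ is the radial linearised form at the Aubin--Talenti bubble $V_0(t)=(1+t^2)^{-(M-2)/2}$ of equation \eqref{i2} in dimension $M$, restricted to the ``angular level'' $\ell_k$; since the substitution is a bijection of the corresponding form domains, $m_k(\a)=m^M(\ell_k)$, the number of negative eigenvalues of this one--dimensional operator. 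I expect this to be the technical heart of the argument: one must check carefully that $t=r^{(\a+2)/2}$, $\psi=\phi$, induces an isomorphism between the form domain of $Q_{k,\a}$ (finite Dirichlet energy against the singular weight, finite weighted $L^2$ norm against $r^\a U_\a^{p_\a-1}$) and the natural radial space in dimension $M$ --- the consistent appearance of the exponents $M-1$ and $M-3$ being the point --- so that negative--eigenvalue counts are genuinely transported.

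Finally I would analyse $m^M(\ell)$ for real $M>2$ and $\ell\ge 0$. For every such $M$, $\varphi_0(t)=\frac{1-t^2}{(1+t^2)^{M/2}}$ solves the $\ell=0$ equation (it is, up to a constant, the generator of dilations of $V_0$) and has a single sign change on $(0,\infty)$, so by Sturm oscillation theory $0$ is the second eigenvalue of the $\ell=0$ operator and $m^M(0)=1$ (the bound $m^M(0)\ge 1$ is also immediate from $Q_\a(U_\a)<0$); and $\varphi_1(t)=\frac{t}{(1+t^2)^{M/2}}\in L^2\big(t^{M-1}dt\big)$ solves the $\ell=M-1$ equation and is of one sign, hence is the ground state, so $0=\inf\mathrm{spec}$ there and $m^M(M-1)=0$. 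Since $\ell\mapsto m^M(\ell)$ is non--increasing (the form gains the nonnegative term $\ell\,t^{-2}$) and testing the $\ell$--form on $\varphi_1$ gives $(\ell-(M-1))\int_0^\infty\varphi_1^2\,t^{M-3}\,dt<0$ when $\ell<M-1$, we conclude $m^M(\ell)=1$ for $0\le\ell<M-1$ and $m^M(\ell)=0$ for $\ell\ge M-1$. Therefore $m_k(\a)=1$ precisely when $\ell_k<M(\a)-1$, and
\[ \ell_k<M(\a)-1\iff 4k(k+N-2)<(\a+2)(2N+\a-2)\iff\a>2(k-1)\iff k<\tfrac{\a+2}{2}, \]
which yields $m(\a)=\sum_{0\le k<(\a+2)/2}N_k$, the asserted formula. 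The remaining assertions follow: since the inequality $k<\frac{\a+2}{2}$ is strict, the term $N_k\ge 1$ enters the sum exactly as $\a$ crosses the even integer $2(k-1)$, and this happens for infinitely many $k$ with $N_k\to+\infty$, so $m(\a)\to+\infty$.
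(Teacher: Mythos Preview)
Your proof is correct and follows essentially the same route as the paper: spherical-harmonic decomposition, the substitution $r\mapsto r^{(\a+2)/2}$ reducing to the linearised critical problem in ``dimension'' $M=\frac{2(N+\a)}{\a+2}$, and the explicit eigenfunctions $\varphi_0=\frac{1-t^2}{(1+t^2)^{M/2}}$, $\varphi_1=\frac{t}{(1+t^2)^{M/2}}$ to locate the spectrum. The only difference is one of packaging --- the paper works with the eigenvalue parameter $\Lambda$ and computes $\Lambda_{1,k}=\frac{(N-2+2k)(N+\a+2k)}{(N+2+2\a)(N+\a)}$ explicitly, whereas you parametrize by the angular level $\ell$ and reach the threshold $\ell=M-1$ (equivalently $k=\frac{\a+2}{2}$) by testing the form on $\varphi_1$ and using monotonicity in $\ell$; this is a slight streamlining but not a different argument.
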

Now let us consider the most important consequence of Theorem \ref{linearized}: the existence of {\em nonradial
solutions} to \eqref{1}. \\
\noindent Set $X=D^{1,2}(\R^N)\cap L^{\infty}_{\b}(\R^N)$   where $L^{\infty}_{\b}(\R^N)$ is a suitable $L^{\infty}-$weighted space (see \eqref{norm-beta}, \eqref{norm-x} for the precise definition).
First let us give the following definition,
\begin{definition} \label{i16}
Let $U_{\a}$ be the radial solution of \eqref{1} defined in \eqref{i9}. We say that a nonradial bifurcation occurs at $(\overline\a,U_{\overline{\a}})$ if in every neighborhood of $(\overline\a,U_{\overline{\a}}) $ in $(0,+\infty)\times X$ there exists a point  $(\a,v_\a)$ with $v_\a$ nonradial solution of  \eqref{1}.
\end{definition}
\noindent Let $O(h)$ be the orthogonal group in $\R^h$.
Our main result is the following.

\begin{theorem} \label{i17}
Let $\a=2(k-1)$ with $k\in\mathbb{N}$, $k\geq 2$. Then, (see Figure 1)\\
i) If $k$ is odd there exists at least a continuum of nonradial solutions to \eqref{1}, invariant with respect to $O(N-1)$, bifurcating from the pair $(\a,U_\a)$.\\
ii) If $k$ is even there exist at least $\big[\frac N2\big]$ continua of nonradial solutions to
\eqref{1}, invariant with respect to $O(N-1),O(N-2)\times O(2),..\dots,$
 bifurcating from the pair $(\a,U_\a)$.\\
Moreover all these solutions are fast decaying, in the sense that
$$\limsup _{|x|\to +\infty} |x|^{N-2}v(x)<+\infty.$$
\end{theorem}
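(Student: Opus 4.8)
The plan is to apply Crandall--Rabinowitz-type bifurcation theory, or more precisely a degree-theoretic global bifurcation argument in the spirit of Rabinowitz, on the bifurcation branch parametrized by $\a$. The starting point is Theorem \ref{linearized}: at $\a = 2(k-1)$ the linearized operator at $U_\a$ acquires a nontrivial kernel coming from the harmonic polynomials $Y_k$, in addition to the always-present dilation mode $Z$. The strategy is to mod out the dilation invariance (which is never a genuine bifurcation and would otherwise pollute the kernel) by working in a space of functions normalized so that $U_\a(0)=1$ is preserved, or by fixing a suitable slice transverse to the dilation orbit; then the effective kernel at $\a = 2(k-1)$ is spanned precisely by $\{Z_k\}$, the space $\mathbb{Y}_k(\R^N)$-valued functions. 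To get \emph{nonradial} branches and to control the multiplicity, I would restrict the whole problem to subspaces of functions invariant under a fixed subgroup $G \subset O(N)$ --- namely $O(N-1)$, $O(N-2)\times O(2)$, and so on --- chosen so that the $G$-invariant part of $\mathbb{Y}_k(\R^N)$ is one-dimensional (this is a classical fact about decomposition of spherical harmonics under these subgroups, and explains the count $[N/2]$ and the parity restriction on $k$). On each such invariant subspace the kernel becomes genuinely one-dimensional and odd-multiplicity, so a Rabinowitz global bifurcation theorem applies in $X = D^{1,2}(\R^N) \cap L^\infty_\b(\R^N)$.

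The key steps, in order, are as follows. First, reformulate \eqref{1} as a fixed-point equation $v = T(\a, v)$ on $X$, where $T$ is built from the inverse Laplacian composed with the nonlinearity $C(\a)|x|^\a v^{p_\a}$; the weighted $L^\infty_\b$ norm is exactly what is needed to make $T$ compact (or the associated map $I - T$ of the form identity plus compact) despite the lack of a Sobolev embedding into $L^{(2N+2\a)/(N-2)}$ --- this is why the paper introduces $L^\infty_\b$ and why the resulting solutions are automatically fast-decaying, $\limsup |x|^{N-2} v(x) < \infty$. Second, verify the transversality/crossing-number hypothesis: one must show the relevant eigenvalue of the linearization genuinely crosses zero as $\a$ passes through $2(k-1)$, equivalently that the algebraic multiplicity picked up is odd; here one uses the explicit kernel from Theorem \ref{linearized} together with a derivative-in-$\a$ computation of the eigenvalue, much as in the proof of Corollary \ref{cor-1} on the Morse index (the Morse index jump already signals the crossing). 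Third, in each $G$-invariant subspace apply the global (Rabinowitz) bifurcation theorem to obtain a continuum emanating from $(\a, U_\a)$; then argue that this continuum consists of nonradial solutions near the bifurcation point (the nontrivial part of the kernel is nonradial, so solutions on the branch close to $U_\a$ are nonradial) and that the different subgroups give genuinely distinct continua because their symmetry types differ.

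The main obstacle, and the step I expect to require the most care, is the functional-analytic setup in the unbounded domain $\R^N$ with the critical-type (indeed slightly supercritical relative to the standard Sobolev exponent) nonlinearity $|x|^\a u^{p_\a}$: one must choose the weight $\b$ in $L^\infty_\b(\R^N)$ so that (a) $U_\a$ and its dilates lie in $X$ with the right asymptotics, (b) the map $T(\a,\cdot)$ is well-defined, continuous, and compact on bounded sets of $X$ uniformly for $\a$ in a neighborhood of $2(k-1)$, and (c) the linearization $D_v T(\a, U_\a)$ is a compact operator whose spectral properties match the kernel description of Theorem \ref{linearized}. Getting the decay estimates for the Newtonian-potential operator against the weight $|x|^\a U_\a^{p_\a - 1}$, and checking these bounds are locally uniform in $\a$, is the technical heart. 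A secondary but real difficulty is cleanly removing the dilation degeneracy so that the bifurcation theorem sees an odd-multiplicity crossing; I would handle this by restricting to the fixed-point set of the chosen subgroup $G$ intersected with a dilation-fixing normalization, on which $Z$ is either excluded or rendered harmless, leaving exactly the one-dimensional $G$-invariant piece of $\{Z_k\}$.
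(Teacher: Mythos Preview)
Your high-level strategy --- restrict to $G$-invariant subspaces so the relevant harmonic-polynomial piece of the kernel becomes one-dimensional, then use a degree/Morse-index change to trigger Rabinowitz bifurcation --- matches the paper. But the implementation you propose is a \emph{direct} bifurcation argument on $\R^N$ in the space $X=D^{1,2}\cap L^\infty_\beta$, and this is precisely the route the paper says it \emph{cannot} take: because $U_\a$ sits in a one-parameter family of dilates for every $\a$, it is never isolated, and the degree of $I-T(\a,\cdot)$ at $U_\a$ cannot be computed (indeed is not defined) on any ball in $X$. Your proposed fix, ``intersect with a dilation-fixing normalization,'' is the natural idea, but you have not carried it out: you would need to show that the constrained problem (say $v(0)=1$) is still of the form identity-plus-compact on the slice, that its linearization has the Fredholm properties inherited from Theorem \ref{linearized} with $Z$ genuinely removed, and that the degree jumps. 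The paper's authors evidently regarded this as intractable and chose a different route.

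What the paper actually does is approximate by the Dirichlet problem \eqref{p-epsilon} on balls $B_{1/\e}$. On each ball the radial solution $u_{\e,\a}$ is radially \emph{nondegenerate} (Lemma \ref{lemma-radial-nondegeneracy}), so the dilation mode is absent and classical degree-theoretic bifurcation applies cleanly in the $G$-invariant subspaces of $C^{1,\gamma}_0(\overline{B_{1/\e}})$; this yields global continua $\mathcal{C}(\a_k^n)$ bifurcating at points $\a_k^n\to 2(k-1)$. The hard work is then passing to the limit $\e\to 0$: one must show the approximating nonradial branches do not collapse onto the radial family $\{U_{\l,\a}\}$ in the limit. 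This is done via a Pohozaev-identity argument (Propositions \ref{lambda=1} and \ref{pohozaev}) that pins down $\l=1$ and gives a uniform lower bound $\|v_n-u_n\|_\infty\ge c>0$ whenever $\bar\a\ne 2(k-1)$, combined with a uniform decay estimate (Proposition \ref{prima-stima}) and a topological $\limsup$ lemma for connected sets. The fast-decay conclusion comes from Proposition \ref{prima-stima}, not from membership in $X$ per se. None of this Pohozaev/limit machinery appears in your outline, and it is the technical core of the proof; without it, or without a fully worked-out slice argument on $\R^N$, your proposal has a genuine gap.
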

\begin{figure}[h!]
\centering
 \vskip-0.5truecm
\includegraphics[scale=0.4]{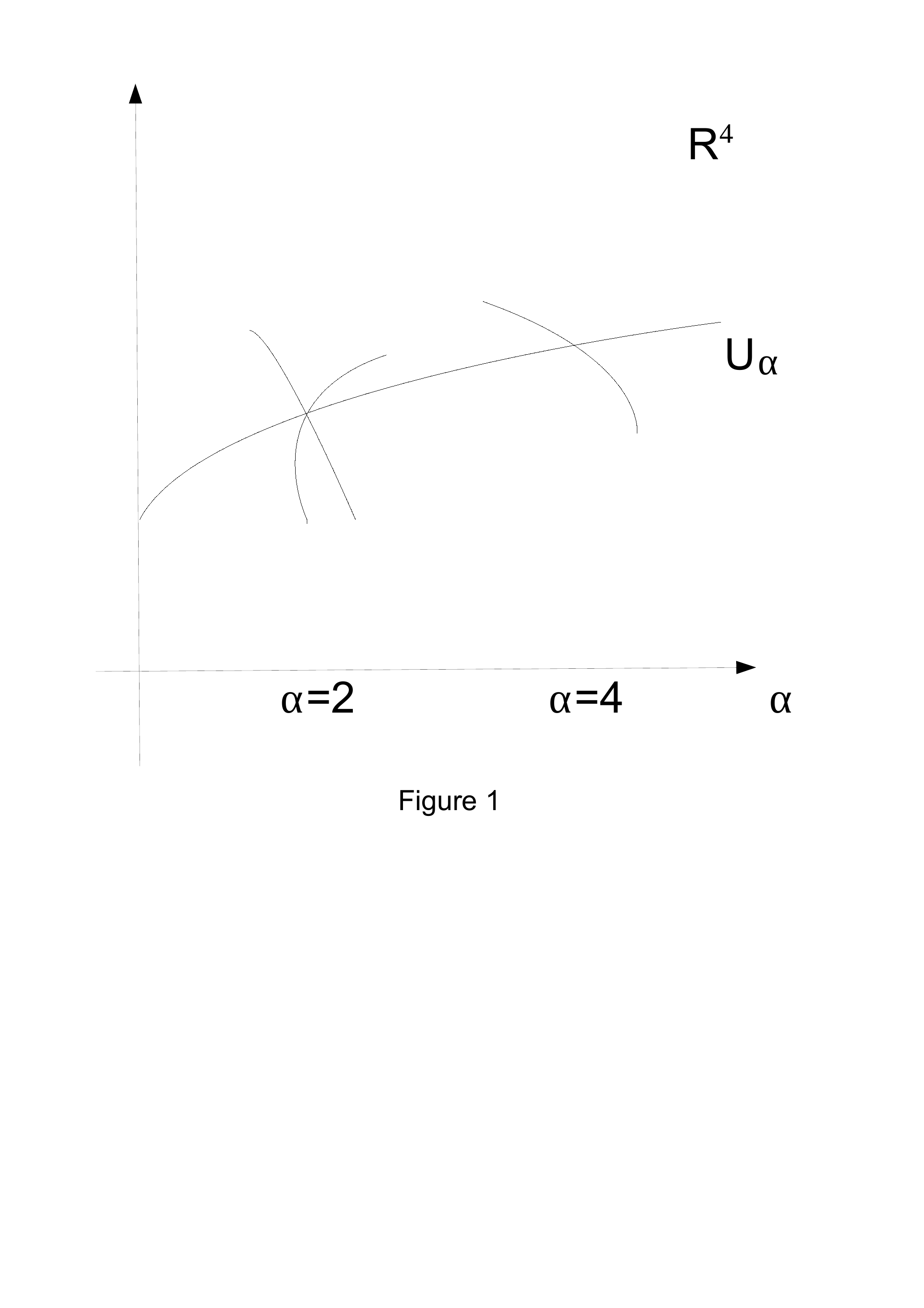}
\label{fig1}
\end{figure}
\vskip-4truecm
The previous theorem states that the structure of solutions to \eqref{1} is much more complex than the case $\a = 0$. In particular, it highlights the special role of the {\em even} numbers $\a$.
The proof of Theorem \ref{i17} requires a lot of work. In fact, even if Theorem
\ref{linearized} suggests the existence of nonradial bifurcation points, it is not possible to a
pply directly  the classical bifurcation theory, because the solution $U_\a$ is not isolated. This fact also makes very complicated to calculate the degree of the operator naturally associated with the problem, which is known as a crucial tool in the bifurcation theory.\\
In order to overcome these difficulties, we introduce a suitable approximated problem
on balls of radius $\frac1\epsilon$ and there we apply the classical bifurcation theory.
In this way we deduce the existence of nonradial solutions $v_\epsilon$ which bifurcate from some
radial functions close to $U_\a $. The final part of the proof will be to show that these
solutions converge to nonradial solutions of problem \eqref{1} as $\epsilon\rightarrow0$.
This last part requires several delicate estimates. In particular, we emphasize that a careful
use of the Pohozaev identity allow us to show that the approximated solutions  $v_\e$ stay
 away from the branches of radial solutions of the  limit problem.\\ 
A natural question that arises from Theorem \ref{i17} is the study of the shape of the bifurcation
diagram of the nonradial solutions.  From the classical bifurcation theory
we are not able to derive information of this type.
However, we conjecture that the nonradial solutions of Theorem \ref{i17} exist only when $\alpha$ is an even integer (see Figure 2).

\begin{figure}[h!] 
\centering
\includegraphics[scale=0.3]{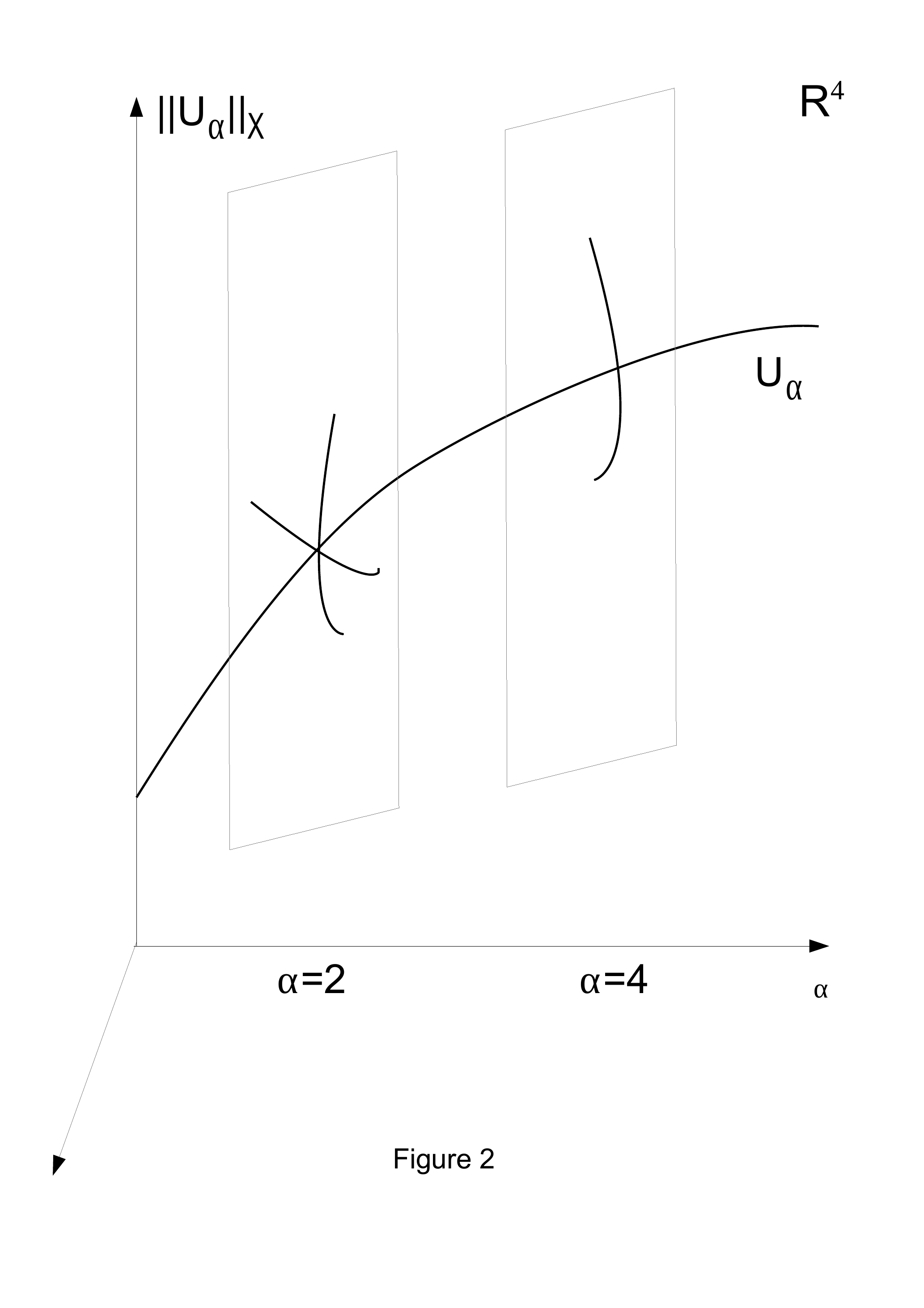}
\label{fig2}
\end{figure}

We have no proof of this, but we are going to make two remarks that support this conjecture.
The first one is the calculation of a branch of explicit solutions which bifurcate from $U_\a$, at least when $\a=2$ and $N$ is even.
\begin{proposition}\label{i18}
Let $\alpha=2$, $N \geq 4$ even and $x\in\R^{N}=\R^\frac N2\times\R^\frac N2$, $x=(x',x'')$ with  $x'\in\R^\frac N2$ and $x''\in\R^\frac N2$. Then, for any $a\in\R$, the functions
\begin{equation}\label{i19}
u(x) = u(|x'|,|x''|) = \frac1{(1+|x|^4 -2a(|x'|^2 - |x''|^2) + a^2)^\frac{N-2}4 }
\end{equation}
form a branch of solutions to \eqref{1} bifurcating from $U_2$.
\end{proposition}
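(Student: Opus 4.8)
The plan is to verify \emph{by direct substitution} that the functions in \eqref{i19} solve \eqref{1} with $\a=2$, reducing the PDE to an elementary polynomial identity, and then to check the remaining requirements --- $u>0$, $u\in D^{1,2}(\R^N)$, nonradiality, fast decay --- together with the fact that $a\mapsto u_a$ is tangent at $a=0$ to the kernel direction $Z_2$ produced by Theorem \ref{linearized}. Write $\R^N=\R^{N/2}\times\R^{N/2}$, $x=(x',x'')$ (here $N$ even is used precisely so that the two blocks have equal dimension $N/2$), and set
\[ g(x)=|x'|^2-|x''|^2,\qquad f_a(x)=1+a^2+|x|^4-2a\,g(x), \]
so that $u=f_a^{-\frac{N-2}{4}}$ and $f_0=1+|x|^4$, hence $u=U_2$ when $a=0$. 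Note also that $u$ depends only on $|x'|$ and $|x''|$, so it is invariant under $O(N/2)\times O(N/2)$, in accordance with Theorem \ref{i17}(ii).

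I would first record the elementary identities on which everything rests: $g$ is a homogeneous harmonic polynomial of degree $2$ (indeed $\Delta g=2\cdot\frac N2-2\cdot\frac N2=0$), so $g\in\mathbb{Y}_2(\R^N)$, with $|\nabla g|^2=4|x|^2$ and $x\cdot\nabla g=2g$; and $\Delta|x|^4=4(N+2)|x|^2$. Differentiating $u=f_a^{-\frac{N-2}{4}}$ twice, the equation $-\Delta u=(N+2)(N-2)|x|^2u^{p_2}$ (with $p_2=\frac{N+6}{N-2}$, so that $u^{p_2}=f_a^{-\frac{N+6}{4}}$) becomes, after multiplication by $\frac{16}{N-2}\,f_a^{\frac{N+6}{4}}$, the identity
\[ 4\,f_a\,\Delta f_a-(N+2)\,|\nabla f_a|^2=16(N+2)|x|^2 . \]
Using the facts above one gets $\Delta f_a=4(N+2)|x|^2$ and $|\nabla f_a|^2=16|x|^6-32a\,g\,|x|^2+16a^2|x|^2$; substituting, all terms containing $a$ or $a^2$ cancel and one is left exactly with $16(N+2)|x|^2$. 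Hence $u$ solves the differential equation for \emph{every} $a\in\R$. The only genuinely creative point is the ansatz itself: one must perturb $|x|^4$ by a degree‑$2$ harmonic polynomial whose gradient has precisely the size $|\nabla g|^2=4|x|^2$ needed to match the weight $|x|^2$, which forces the product structure $\R^{N/2}\times\R^{N/2}$; once this is seen there is no real obstacle.

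Then come the qualitative properties. Since $|g|\le|x|^2$, one has $f_a\ge 1+a^2+|x|^4-2|a|\,|x|^2=1+(|x|^2-|a|)^2\ge 1>0$, so $u>0$. As $|x|\to+\infty$, $f_a\sim|x|^4$, whence $u(x)\sim|x|^{2-N}$ and $|\nabla u(x)|=O(|x|^{1-N})$; this gives simultaneously $u\in D^{1,2}(\R^N)$ (both $\int_{\R^N}|\nabla u|^2$ and $\int_{\R^N}|u|^{2^*}$ are finite) and the fast decay $\limsup_{|x|\to+\infty}|x|^{N-2}u(x)<+\infty$. Finally $u$ is nonradial for $a\neq0$: for instance $u(e_1,0)=(2-2a+a^2)^{-\frac{N-2}{4}}\neq(2+2a+a^2)^{-\frac{N-2}{4}}=u(0,e_1)$.

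It remains to make sense of ``bifurcating from $U_2$''. From $\partial_a f_a=2a-2g$ we obtain
\[ \partial_a u_a\big|_{a=0}=-\tfrac{N-2}{4}\,(1+|x|^4)^{-\frac{N+2}{4}}(-2g)=\tfrac{N-2}{2}\;\frac{g}{(1+|x|^4)^{\frac{N+2}{4}}}=\tfrac{N-2}{2}\,Z_2, \]
where $Z_2$ is exactly the kernel element in \eqref{i13} for $\a=2$ (i.e. $k=2$) associated with $Y_2=g\in\mathbb{Y}_2(\R^N)$. Combined with the fact that $u_a\to U_2$ in $X$ as $a\to0$ --- a routine consequence of the uniform pointwise decay estimate above --- this shows that $\{(2,u_a):a\in\R\}$ is a genuine branch of solutions of \eqref{1}, nonradial for $a\neq0$, passing through $(2,U_2)$ and tangent there to the linearized direction $Z_2$ predicted by Theorem \ref{linearized}; in particular a nonradial bifurcation in the sense of Definition \ref{i16} occurs at $(2,U_2)$. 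As anticipated, the whole computation is a verification; the only real difficulty is discovering the ansatz, i.e. the right product decomposition and harmonic perturbation.
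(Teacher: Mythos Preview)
Your proof is correct and follows essentially the same route as the paper: one substitutes the ansatz $u=f_a^{-\frac{N-2}{4}}$ into the equation and reduces everything to the single condition $|\nabla Y|^2=4|x|^2$ on the harmonic polynomial $Y(x)=|x'|^2-|x''|^2$, which is precisely the paper's condition \eqref{cond} for $\a=2$. The paper adds some motivation for the ansatz via the transformation $r\mapsto r^{\frac{2+\a}{2}}$ and the fictitious dimension $M=\frac{2(N+\a)}{2+\a}$, while you instead supply the details the paper omits (the explicit identity $4f_a\Delta f_a-(N+2)|\nabla f_a|^2=16(N+2)|x|^2$, positivity, decay, $D^{1,2}$-membership, nonradiality, and the tangent direction $\partial_a u_a|_{a=0}=\tfrac{N-2}{2}Z_2$); these are useful complements but do not change the underlying argument.
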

The second reason that supports our conjecture is the following classification result for a  Liouville-type equations with singular data (see  J. Prajapat and G. Tarantello \cite{PT01}).
\begin{theorem}\label{i20}
Let us consider the problem
 \begin{equation}\label{i21}
\left\{\begin{aligned}
&-\Delta u=2(\alpha+2)^2|x|^\alpha e^u  \quad \hbox{in }\R^2, \\
&\int_{\R^2}|x|^\alpha e^u<+\infty.
\end{aligned}
\right.
\end{equation}
If $\alpha$ is not an {\em even} integer then the unique solutions to \eqref{i21} are given by
 \begin{equation}\label{i22}
u_\lambda(x)=\log\frac{\lambda^{\alpha+2}}{\left(1+\lambda^{\alpha+2}|x|^{\alpha+2}\right)^2}, \qquad \lambda > 0.
 \end{equation}
On the other hand, if $\alpha$ is an {\em even} integer we also have the following $nonradial$ solutions, for $a\in\R,\ \theta_0 \in[0,2\pi]$
 \begin{equation}\label{i23}
 v_{a,\theta_0}(x)=\log\frac1{\left(1+|x|^{\alpha+2}-2a|x|^\frac{\alpha+2}2\cos\left(\frac{\alpha+2}2(\theta-\theta_0)\right)+a^2\right)^2}.
 \end{equation}
 with $\theta$ the angle of $x$ in polar coordinates.
 \end{theorem}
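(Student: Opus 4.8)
The plan is to regard the weight $|x|^\a e^u$ in \eqref{i21} as the conformal factor of a metric of constant Gaussian curvature, and to reduce the classification, by means of the change of variable $r\mapsto r^{(\a+2)/2}$ (equivalently the conformal map $z\mapsto z^{(\a+2)/2}$), to that of round metrics on a flat cone of total angle $2\pi\b$, where $\b:=\frac{\a+2}{2}$. The whole phenomenon is then dictated by whether $\b$ is an integer.

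First I would settle the easy direction together with a normalization. The radial family \eqref{i22} solves \eqref{i21} by the classical Liouville computation. For \eqref{i23}, writing $z=x_1+ix_2$ and using that $\D(W\circ f)=|f'|^2\,(\D W)\circ f$ for holomorphic $f$, one checks that $u(z)=W(z^\b)$ solves $-\D u=8\b^2|z|^{2\b-2}e^u=2(\a+2)^2|x|^\a e^u$ whenever $W$ solves the regular Liouville equation $-\D W=8e^W$; taking the standard bubble $W(w)=\log\frac{\mu^2}{(1+\mu^2|w-w_0|^2)^2}$ with $w_0=a\,e^{i\b\t_0}$ then reproduces exactly \eqref{i23}, provided $z^\b$ is single-valued, i.e. $\b\in\N$ (equivalently $\a=2(\b-1)$ an even integer); for $w_0=0$ it reproduces \eqref{i22}, valid for all $\b>1$. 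For the converse I would first show, using Brezis--Merle type estimates together with the Pohozaev identity attached to the scaling $u\mapsto u(\l\,\cdot)+(\a+2)\log\l$, that every finite-mass solution satisfies $u(x)=-2(\a+2)\log|x|+O(1)$ as $|x|\to\infty$ and $\int_{\R^2}2(\a+2)^2|x|^\a e^u=4\pi(\a+2)$.

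Next I would carry out the reduction. The metric $g_u:=|x|^\a e^u\,|dx|^2$ has Gaussian curvature $\equiv4$ on $\R^2\setminus\{0\}$, a conical singularity of total angle $2\pi\b$ at the origin (since $u$ is bounded there and $|x|^\a=|x|^{2\b-2}$) and, by the asymptotics above, a conical singularity of the \emph{same} angle at infinity; equivalently, with $s=r^\b$ and $\Theta=\b\t$, the function $v(s,\Theta):=u(r,\t)$ is a finite-mass solution of $-\D v=8e^v$ on the flat cone of total angle $2\pi\b$. It then remains to classify the constant-curvature-$4$ metrics on $S^2$, in the standard conformal class, with two antipodally placed conical points (at $0$ and $\infty$) of angle $2\pi\b$. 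To such a metric one attaches a developing map $D$ into $S^2\subset\mathbb{C}P^1$, whose monodromy around the origin is a rotation $\rho\in SO(3)$ through the angle $2\pi\b$. If $\b\in\N$, then $\rho=\mathrm{id}$, so $D$ descends to a rational self-map of $\mathbb{C}P^1$ which, by Riemann--Hurwitz, is ramified exactly --- and to order $\b$ --- over $0$ and $\infty$; hence $D=A\circ(z\mapsto z^\b)$ for some M\"obius map $A$, and $g_u$ is the pullback under $z\mapsto z^\b$ of the bubble metric $A^{*}g_{S^2}=e^{W}|dw|^2$, i.e. $u(x)=W(z^\b)$ --- which is \eqref{i22} when $w_0=0$ and \eqref{i23} otherwise. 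If $\b\notin\N$, then $\rho$ is a nontrivial rotation and $D$ does not descend; here I would instead argue that $u$ is radial, by noting that every rotate $u(e^{i\psi}\,\cdot)$ is again a finite-mass solution of \eqref{i21} with the same conical data, so once one knows that such solutions form only the one-parameter scaling family \eqref{i22}, the circle $\psi\mapsto u(e^{i\psi}\,\cdot)$ must be constant and $u$ radial; a radial finite-mass solution of \eqref{i21} then obeys an ODE whose only admissible solutions are the $u_\l$.

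The hard part will be the exclusion of nonradial solutions when $\b\notin\N$: equivalently, showing that the solution set of \eqref{i21} reduces to the single scaling family \eqref{i22} when the cone angle is not a multiple of $2\pi$. I see two routes. One is to establish the Liouville counterpart of Theorem \ref{linearized} --- that for $\b\notin\N$ the linearization of \eqref{i21} at $u_\l$ has one-dimensional kernel, which follows from the same change of variable by reduction to the (known) kernel of the regular Liouville equation at a bubble --- and then to upgrade this, by a global nondegeneracy/connectedness argument, to the statement that the full solution set is that family, after which the circle of rotates collapses as above. The other is a direct transplantation of the moving-plane method to the transformed equation for $v$ on the cone, exploiting the reflections that survive there. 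Either way, ruling out nonradial solutions precisely when the cone angle is not a multiple of $2\pi$ is the crux, and it parallels the role of the even integers in Theorem \ref{linearized}. By contrast, when $\b\in\N$ the only genuinely delicate point is the removable-singularity analysis of the developing map at the two conical points, which is what makes the descent to a rational self-map legitimate.
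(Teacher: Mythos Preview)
The paper does not prove Theorem~\ref{i20}; it is quoted from Prajapat--Tarantello \cite{PT01} as external evidence supporting a conjecture about the H\'enon problem. There is therefore no proof in the paper to compare your proposal against.

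For what it is worth, the argument in \cite{PT01} proceeds by the method of moving planes adapted to the singular weight: one shows directly that when $\alpha$ is not an even integer every finite-mass solution is radially symmetric about the origin, after which the ODE classification is routine. Your developing-map/conformal approach is a genuinely different route, closer to the complex-analytic classifications of Chou--Wan and related work. The verification of the explicit families and the asymptotic/total-mass computation are fine. But your proposal has a real gap exactly where you flag it: for $\beta\notin\N$, your ``first route'' is circular --- knowing that the linearization at $u_\lambda$ has one-dimensional kernel tells you the radial family is isolated among solutions, not that it is the \emph{only} family; a ``global nondegeneracy/connectedness argument'' cannot rule out disconnected components of the solution set without further input, and that input is precisely the classification you are trying to prove. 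Your ``second route'' is essentially the Prajapat--Tarantello argument, which you have not carried out. In the case $\beta\in\N$ your outline is correct, though the removable-singularity step for the developing map at $0$ and $\infty$ (so that it descends to a rational self-map of $\mathbb{C}P^1$) deserves a precise statement rather than a passing mention.
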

Problem \eqref{i21}, that admits nonradial solutions only if $\a$ is an even integer,
can be seen as the equivalent of \eqref{1} if $N=2$.\\
Note also the similarities of \eqref{i23}, when $\alpha=2$,  with the explicit solution given by  \eqref{i19}.\\
Another important similarity between our results and those related to problem \eqref{i21},
 concerns the analogue of Theorem \ref{linearized} (\cite{DEM12}).
\begin{theorem} \label{i24}
Let $\a$ be an even integer and  $k=\frac{\a+2}2$. Let us consider the linearized problem associated to \eqref{i21}, i.e.,
 \begin{equation}\label{i25}
-\Delta z=2(\alpha+2)^2|x|^\alpha e^{u_1(x)}z  \quad \hbox{in }\R^2
 \end{equation}
Then the space of all bounded solutions of \eqref{i25} is spanned
by
 \begin{equation}\label{i26}
 Z_1(x)=\frac{1-|x|^{2+\a}}{1+|x|^{2+\a}},\quad Z_2(x)=\frac{P_1(x)}{1+|x|^{2+\a}},\quad Z_3(x)=\frac{P_2(x)}{1+|x|^{2+\a}}
 \end{equation}
 where $P_1(x)$ and $P_2(x)$ form a basis of $\mathbb{Y}_k(\R^2)$, the space of all homogeneous harmonic polynomials of degree $k$ in $\R^2$.
 \end{theorem}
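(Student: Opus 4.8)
We indicate how Theorem \ref{i24} can be proved (the full argument is in \cite{DEM12}), following the philosophy of Theorem \ref{linearized}. The plan is to reduce the singular linearized equation \eqref{i25} to the model case $\a=0$ via the change of variables $r\mapsto r^{\frac{\a+2}{2}}$ used throughout this paper; the only genuinely new feature is that, since this map is $\frac{\a+2}{2}$--to--one on $\R^2\setminus\{0\}$, it must be applied separately on each angular Fourier mode.

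Write $\a=2(k-1)$ with $k=\frac{\a+2}{2}\in\N$; recalling that $u_1(x)=-2\log(1+|x|^{\a+2})$, equation \eqref{i25} reads $-\Delta z=2(\a+2)^2|x|^{\a}(1+|x|^{\a+2})^{-2}z$ in $\R^2$. Since $\a$ is an even integer the coefficient is smooth, so I would first note that a bounded solution $z$ is $C^\infty$, expand $z(r,\theta)=\sum_{m\in\Z}z_m(r)e^{im\theta}$ with each $z_m$ bounded on $[0,\infty)$ and $z_m(r)=O(r^{|m|})$ as $r\to 0$, and observe that each $z_m$ solves
$$-z_m''-\tfrac1r z_m'+\tfrac{m^2}{r^2}z_m=\frac{2(\a+2)^2 r^{\a}}{(1+r^{\a+2})^2}\,z_m\qquad\text{on }(0,\infty).$$
Then I would substitute $s=r^{k}$ and set $\psi_m(s):=z_m(r)$: a direct computation turns the last equation into the radial ODE of the azimuthal mode $\nu:=\frac{2m}{\a+2}=\frac mk$ of the \emph{model} linearized Liouville equation $-\Delta\zeta=8(1+|y|^2)^{-2}\zeta$ in $\R^2$, while $\psi_m$ stays bounded with $\psi_m(s)=O(s^{|\nu|})$ near $s=0$.

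The crucial step is the classification of these model modes. Since the model equation is conformally equivalent, via stereographic projection, to the eigenvalue problem $-\Delta_{S^2}\zeta=2\zeta$ on the sphere, its mode--$\nu$ radial ODE has a solution bounded on all of $(0,\infty)$ if and only if $|\nu|\in\{0,1\}$, and then it is a multiple of $\frac{1-s^2}{1+s^2}$ (for $\nu=0$) or of $\frac{s}{1+s^2}$ (for $|\nu|=1$). I expect this to be the main obstacle: excluding non--integer $\nu$ and $|\nu|\ge 2$ requires a Frobenius analysis at $s=0$ and $s=\infty$ --- equivalently, the non--existence of an associated Legendre function $P_1^{\nu}$ bounded at both poles for such $\nu$ --- and can also be obtained by a Sturm/Wronskian comparison of the solution regular at $s=0$ with the explicit solution $\frac{s}{1+s^2}$ of the $\nu=1$ equation.

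It then remains to translate back. The condition $\nu=\frac mk\in\{0,\pm1\}$ selects $m=0$, giving $z_0(r)=\frac{1-r^{\a+2}}{1+r^{\a+2}}$, i.e.\ the function $Z_1$ of \eqref{i26}; and $m=\pm k$, which is admissible \emph{precisely because} $k=\frac{\a+2}{2}$ is an integer, i.e.\ because $\a$ is an even integer, giving $z_{\pm k}(r)=\frac{r^k}{1+r^{\a+2}}$ with angular factors $\cos(k\theta)$ and $\sin(k\theta)$. As $r^k\cos(k\theta)$ and $r^k\sin(k\theta)$ are the real and imaginary parts of $(x_1+ix_2)^k$, they form a basis of $\mathbb{Y}_k(\R^2)$, so these modes yield exactly $Z_2$ and $Z_3$; all remaining modes vanish, and the reverse inclusion is the straightforward verification that each $Z_i$ solves \eqref{i25}. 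Every step other than the model--mode classification --- the regularity, the Fourier decomposition, the change of variables, and the bookkeeping --- is routine.
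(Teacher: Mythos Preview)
The paper does not give its own proof of Theorem~\ref{i24}: the result is quoted from \cite{DEM12} as a two--dimensional analogue supporting the authors' conjecture, and no argument is supplied in the present paper. So there is no ``paper's proof'' to compare against.

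That said, your sketch is a faithful adaptation of the paper's own proof of Theorem~\ref{linearized} to the $N=2$ Liouville setting, and the computations you indicate are correct: the substitution $s=r^{k}$ with $k=\frac{\a+2}{2}$ does transform the mode--$m$ radial ODE into the mode--$\nu$ ODE of the model problem $-\Delta\zeta=8(1+|y|^2)^{-2}\zeta$ with $\nu=m/k$, and the stereographic identification with $-\Delta_{S^2}\zeta=2\zeta$ is the right way to read off the integer--mode solutions. You correctly flag the one genuine gap relative to Theorem~\ref{linearized}: here the model analysis must also rule out \emph{non--integer} values of $\nu$ (those $m$ not divisible by $k$), which is outside the scope of the spherical--harmonic picture and does require the Frobenius/Legendre or Sturm argument you mention. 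This is exactly the extra work that \cite{DEM12} carries out; once it is granted, the rest of your outline is routine and yields the statement.
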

We conclude showing a further application of Theorem \ref{linearized} that allows to give a
generalization of an existence result of single-peak
solutions for the almost critical H\'enon equation (see
\cite{GG12}) in bounded domains.
\begin{theorem} \label{i14}
Let $\alpha > 0$ different from an {\em even integer} and $\Om$
be a smooth bounded domain of $\R^N$ with $N\ge3$ and $0\in\Om$.
Then, for $\e$ small enough, there exists a solution $u_\e$ to
\begin{equation} \label{i15}
\left\{\begin{array}{ll}
-\Delta u=|x|^\alpha u^{\frac{N+2+2\alpha}{N-2}-\e}  & \hbox{ in }\Om\\
u>0 & \hbox{ in }\Om\\
u=0 & \hbox{ on }\de\Om.
\end{array}\right.
\end{equation}
\end{theorem}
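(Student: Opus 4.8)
The plan is to construct $u_\e$ by a Lyapunov--Schmidt finite-dimensional reduction modelled on the bubble $U_\a$ of \eqref{i9} rescaled and concentrated at the origin, the decisive nondegeneracy input being Theorem \ref{linearized} — and this is exactly where the hypothesis that $\a$ is not an even integer is spent. Write $p_\a=\frac{N+2+2\a}{N-2}$, so that $p_\a+1=\frac{2(N+\a)}{N-2}$ is the critical exponent of the Hardy--Sobolev embedding $H^1_0(\Om)\hookrightarrow L^{p_\a+1}(\Om,|x|^\a\,dx)$; since $p_\a-\e<p_\a$, the exponent $p_\a+1-\e$ is subcritical, the embedding is compact, and the energy $I_\e(u)=\frac12\int_\Om|\na u|^2-\frac1{p_\a+1-\e}\int_\Om|x|^\a(u^+)^{p_\a+1-\e}$ is well posed on $H^1_0(\Om)$, with critical points solving \eqref{i15}. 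For $\d>0$ small put $U_\d(x)=\d^{-\frac{N-2}2}U_\a(x/\d)$; absorbing the constant $C(\a)^{1/(p_\a-1)}$ that reconciles \eqref{1} with \eqref{i15}, $U_\d$ solves $-\D U_\d=|x|^\a U_\d^{p_\a}$ in $\R^N$. Let $PU_\d\in H^1_0(\Om)$ be its projection, $-\D PU_\d=|x|^\a U_\d^{p_\a}$ in $\Om$, $PU_\d=0$ on $\de\Om$. Since $U_\d$ is fast decaying, $U_\d(x)=O(\d^{\frac{N-2}2}|x|^{-(N-2)})$ at infinity, one has $PU_\d=U_\d-\d^{\frac{N-2}2}\tau_\Om+o(\d^{\frac{N-2}2})$, with $\tau_\Om$ governed by the regular part of the Green function of $\Om$ at $0$.

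By Theorem \ref{linearized}, since $\a>0$ is not an even integer, the space of $D^{1,2}(\R^N)$-solutions of the linearized equation \eqref{1.4} is one-dimensional and spanned by $Z$ in \eqref{i12}, which is $\de_\l U_{\l,\a}|_{\l=1}$ up to a scalar. Accordingly I set $\psi_\d:=\de_\d PU_\d$ (the projection of the corresponding rescaling of $Z$), let $K_\d:=\mathrm{span}\{\psi_\d\}$, and split $H^1_0(\Om)=K_\d\oplus K_\d^\perp$. I look for $u_\e=PU_\d+\phi$ with $\phi\in K_\d^\perp$, splitting $I_\e'(PU_\d+\phi)=0$ into the auxiliary equation $\Pi_{K_\d^\perp}I_\e'(PU_\d+\phi)=0$ and the one-dimensional bifurcation equation $\langle I_\e'(PU_\d+\phi),\psi_\d\rangle=0$.

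The heart of the matter, and the step I expect to be the main obstacle, is the uniform invertibility of $L_{\d,\e}:=\Pi_{K_\d^\perp}\circ I_\e''(PU_\d)|_{K_\d^\perp}$ on $K_\d^\perp$, i.e.\ $\|L_{\d,\e}^{-1}\|\le C$ for all $\d,\e$ small. One argues by contradiction and blow-up: from $\phi_n\in K_{\d_n}^\perp$, $\|\phi_n\|=1$, $\|L_{\d_n,\e_n}\phi_n\|\to0$, rescale $\phi_n$ around the origin at scale $\d_n$ — legitimate because $0$ is an interior point, so $\d_n^{-1}\Om$ exhausts $\R^N$ — obtaining in the limit a $D^{1,2}(\R^N)$-solution of \eqref{1.4} which is orthogonal to $Z$, hence identically $0$ by Theorem \ref{linearized}; a standard local argument then forces $\|\phi_n\|\to0$, a contradiction. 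It is precisely here that $\a$ being an even integer would break the scheme: Theorem \ref{linearized} would then provide the extra kernel directions $Z_k$ coming from $\mathbb{Y}_k(\R^N)$, which are not captured by the one-parameter family $\{PU_\d\}$; the reduction would require more modulation parameters, and one would be back in the nonradial bifurcation regime of Theorem \ref{i17}. Granting the uniform bound, the auxiliary equation is solved by a contraction mapping, producing $\phi_{\d,\e}\in K_\d^\perp$, of class $C^1$ in $\d$, with $\|\phi_{\d,\e}\|\le C(\e|\log\d|+\d^{\frac{N-2}2})$ — the size of the error $\|\Pi_{K_\d^\perp}I_\e'(PU_\d)\|$ coming, respectively, from the exponent perturbation $p_\a-\e$ and from the boundary correction $PU_\d-U_\d$.

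It then remains to solve the bifurcation equation, equivalently to find a critical point of $\tilde I_\e(\d):=I_\e(PU_\d+\phi_{\d,\e})$; by the reduction scheme such a critical point $\d_\e$ yields a genuine solution $u_\e=PU_{\d_\e}+\phi_{\d_\e,\e}$ of \eqref{i15}, positive by the maximum principle ($PU_{\d_\e}$ being dominant) and exhibiting a single peak at the origin as $\e\to0$. A standard expansion gives
\begin{equation*}
\tilde I_\e(\d)=A(\a)+c_1\,\e\log\tfrac1\d+c_2\,\tau_\Om(0)\,\d^{N-2}+o\!\left(\e\log\tfrac1\d+\d^{N-2}\right),
\end{equation*}
uniformly for $\d$ in an interval around $\e^{1/(N-2)}$, with $c_1,c_2>0$ and $\tau_\Om(0)>0$ (a direct computation of the two interaction integrals). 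The two displayed variable terms compete and have a strict interior minimum at $\d\sim\e^{1/(N-2)}$, whence $\tilde I_\e$ has a $C^0$-stable critical point there for $\e$ small, producing the desired $u_\e$. Apart from the invertibility step, everything is by now routine; all the essential new information — namely that, for $\a$ not an even integer, $U_\a$ is nondegenerate modulo dilations — is supplied by Theorem \ref{linearized}.
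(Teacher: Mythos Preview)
Your proposal is correct and follows exactly the approach the paper indicates: the paper does not give a detailed proof of Theorem~\ref{i14} but states that the Lyapunov--Schmidt finite-dimensional reduction of \cite{GG12} (originally carried out for $0<\a\le1$) goes through verbatim once Theorem~\ref{linearized} is available, since for $\a$ not an even integer the kernel of the linearized operator is one-dimensional and generated by the dilation direction $Z$. Your write-up supplies precisely the details the paper omits --- the projection $PU_\d$, the invertibility of $L_{\d,\e}$ on $K_\d^\perp$ via blow-up and Theorem~\ref{linearized}, the contraction for $\phi_{\d,\e}$, and the reduced-energy expansion --- and correctly pinpoints why even $\a$ would break the scheme.
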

This theorem was proved in \cite{GG12} under the more restrictive assumption
$0<\alpha\le1$, since it was not available the characterization of
the solutions of \eqref{1.4}. The proof is based on the Liapunov-Schmidt finite dimensional reduction method
and still works if $\a$ is  different from an {\em even integer} since the kernel of the linearized operator is one-dimensional.
The case of $\a$ even is much more difficult, due to the richness of the kernel of the linearized operator, and it seems difficult to handle as the previous one. Obviously, Theorem \ref{linearized} can be applied to asymptotic problems similar
to \eqref{i15}.

The paper is organized as follows. In Section \ref{s2} we prove Theorem \ref{linearized} and Corollary \ref{cor-1}. In Section \ref{s3} we study the approximated problem in the ball and we construct our approximated solution. In Section \ref{s6} we prove some estimates on the approximated solution which enable us to pass to the limit and in Section \ref{s7} we prove Theorem \ref{i17} and Proposition \ref{i18}. Finally in the Appendix we give a simplified proof of known results.

\sezione{The linearized operator } \label{s2}
In this section we consider the linearized problem  \eqref{i9} and
we give the proof of Theorem \ref{linearized} and of Corollary \ref{cor-1}.

\begin{proof}[Proof of Theorem \ref{linearized}]
We want to find solutions of
\begin{align}\label{1.7}
\begin{cases}
-\Delta V=C(\a)p_\a\frac{|x|^{\a}}{(1+|x|^{2+\a})^2}V  & \hbox{ in }\R^N\\
V\in D^{1,2}\left(\R^N\right)&
\end{cases}
\end{align}
of the form
\begin{equation*}
V(r,\theta) = \sum_{k=0}^{\infty} \psi_k(r)Y_k(\theta), \qquad \text{where} \qquad r=|x| \,\, , \,\, \theta=\frac{x}{|x|} \in S^{N-1}
\end{equation*}
and
\begin{equation*}
\psi_k(r) = \int_{S^{N-1}}V(r,\theta)Y_k(\theta)d\theta.
\end{equation*}
Here $Y_k(\theta)$ denotes the $k$-th spherical harmonics, i.e. it satisfies
\begin{equation*}
-\Delta_{S^{N-1}} Y_k = \mu_kY_k
\end{equation*}
where $\Delta_{S^{N-1}}$ is the Laplace-Beltrami operator on $S^{N-1}$ with the standard metric and $\mu_k$ is the $k$-th eigenvalue
of $-\Delta_{S^{N-1}}$. It is known that
$$\mu_k = k(N-2+k), \qquad k=0,1,2,\dots$$
whose multiplicity is
$$ \frac{(N+2k-2)(N+k-3)!}{(N-2)!\,k!} $$
and that
$$ Ker\left(\Delta_{S^{N-1}} + \mu_k\right) = \mathbb{Y}_k(\R^N)\left|_{S^{N-1}}\right. .$$

The function $V$ is a solution of \eqref{1.4} if and only if $\psi_k(r)$ satisfies
\begin{align} \label{1.8}
\begin{cases}
-\psi_k''(r) - \frac{N-1}{r}\psi_k'(r) + \frac{\mu_k}{r^2}\psi_k(r) = C(\a)p_{\a}\frac{r^\alpha}{(1+r^{2+\a})^2}\,\psi_k(r)\,, \quad \text{in} \,\,(0,\infty) \\
 \psi_k \in \mathcal{E} \\
 \psi_k'(0)=0 \,\,\, \text{if} \,\,\, k=0 \quad \text{and} \quad \psi_k(0)=0 \,\,\, \text{if} \,\,\, k \geq 1
\end{cases}
\end{align}
where
\begin{equation}\label{def-E}
 \mathcal{E} = \left\{ \psi \in C^1([0,\infty)) \quad \left| \quad \int_0^{\infty}r^{N-1}|\psi'(r)|^2dr< \infty \right. \right\}.
\end{equation}

We shall solve \eqref{1.8} using the following change of variables
\begin{equation} \label{1.8a}
 \eta_k(r) = \psi_k(r^{\frac{2}{2+\a}})
\end{equation}
that transforms \eqref{1.8} into the equation
\begin{align} \label{1.9}
\begin{cases}
 -\eta_k''(r) - \frac{M-1}{r}\eta_k'(r) + \frac{4\mu_k}{(2+\a)^2}\frac{\eta_k(r)}{r^2} =
M(M+2)\,\frac{\eta_k(r)}{(1+r^2)^2}\,, \quad \text{in}\,\, (0,\infty) \\
\eta_k \in \widetilde{\mathcal{E}} \\
 \eta_k'(0)=0 \,\,\, \text{if} \,\,\, k=0 \quad \text{and} \quad \eta_k(0)=0 \,\,\, \text{if} \,\,\, k \geq 1
\end{cases}
\end{align}
where

\begin{equation} \label{1.8ab}
M=\frac{2(N+\a)}{2+\a}\,, \qquad  \widetilde{\mathcal{E}} = \left\{ \eta \in C^1([0,\infty)) \quad \left| \quad \int_0^{\infty}r^{M-1}|\eta'(r)|^2dr< \infty \right. \right\}.
\end{equation}

Note that we have $\eta_0'(0)=0$ since $\eta_0'(r)=\frac2{\alpha+2}r^{-\frac\a{\a+2}}\psi_0'\left(r^\frac2{\alpha+2}\right)$ and by \eqref{1.8} $\psi_0'(r) =O(r^{\a+1})$ near $r=0$.

Fixed $M$ let us now consider the following eigenvalue problem
\begin{equation} \label{1.10}
-\eta''(r) - \frac{M-1}{r}\eta'(r) + \beta \frac{\eta(r)}{r^2} =
M(M+2)\,\frac{\eta(r)}{(1+r^2)^2}\,, \quad \text{in}\,\, (0,\infty).
\end{equation}
The equation \eqref{1.10} is a singular Sturm-Liouville problem, it has a sequence of simple eigenvalues $\beta_1 > \beta_2 > \dots$ and if $\eta_j$
 is an eigenfunction of $\beta_j$ then $\eta_j$ has exactly $j-1$ zeros in $(0,\infty)$, see for example \cite[Theorem 10.12.1]{Z05}.

When $M$ is an integer we can study \eqref{1.10} as the linearized operator of the equation $-\Delta U = U^{\frac{M+2}{M-2}}$ around the
standard solution $U(x) = \frac{1}{(1+|x|^2)^{\frac{M-2}{2}}}\,$, (note that we always have $M>2$). In this case, we know that
\begin{equation} \label{1.11}
 \beta_1 = M-1\,;\,\, \beta_2=0 \quad \text{and} \quad \eta_1(r)=\frac{r}{(1+r^2)^{\frac{M}{2}}}\, ; \,\, \eta_2(r) = \frac{1-r^2}{(1+r^2)^{\frac{M}{2}}}\, .
\end{equation}
However, even when $M$ is not an integer we readily see that \eqref{1.11} remains true. Therefore, we can conclude that \eqref{1.9} has nontrivial
solutions if and only if
$$ \frac{4\mu_k}{(2+\a)^2} \in \{0\,, M-1\},$$
which means that
$$ k=0 \quad \text{or} \quad \a=2(k-1). $$
Turning back to \eqref{1.8} we obtain the solutions
\begin{align}
\psi_0(r) = \frac{1-r^{2+\a}}{(1+r^{2+\a})^{\frac{N+\a}{2+\a}}} & \qquad  \text{if} \,\,\, \a \neq 2(k-1) \,\,,\, k \in \N \label{1.11a}\\
\psi_0(r) = \frac{1-r^{2+\a}}{(1+r^{2+\a})^{\frac{N+\a}{2+\a}}} \,\,,\,\, &\psi_k(r) = \frac{r^k}{(1+r^{2+\a})^{\frac{N+\a}{2+\a}}} \label{1.11b}
\qquad  \text{if} \,\,\, \a = 2(k-1) \,\,,\, k \in \N
\end{align}
and the proof of the theorem is complete.
\end{proof}

%
\begin{proof}[Proof of Corollary \ref{cor-1}]
Let us now consider the following eigenvalue problem,
\begin{align}\label{1.12}
\begin{cases}
-\Delta V=\Lambda C(\a)p_\a\frac{|x|^{\a}}{(1+|x|^{2+\a})^2}V  & \hbox{ in }\R^N\\
V\in D^{1,2}\left(\R^N\right).&
\end{cases}
\end{align}
The Morse index of $U_{\a}$ is the sum of the dimensions of the eigenspaces of \eqref{1.12} related to $\Lambda < 1$.

As in the  proof of Theorem \ref{linearized}, we are led to the equation
\begin{align} \label{1.13}
\begin{cases}
-\psi_k''(r) - \frac{N-1}{r}\psi_k'(r) + \frac{\mu_k}{r^2}\psi_k(r) = \Lambda C(\a)p_{\a}\frac{r^\alpha}{(1+r^{2+\a})^2}\,\psi_k(r)\,, \quad \text{in} \,\,(0,\infty) \\
 \psi_k \in \mathcal{E}, \quad \Lambda < 1 \\
 \psi_k'(0)=0 \,\,\, \text{if} \,\,\, k=0 \quad \text{and} \quad \psi_k(0)=0 \,\,\, \text{if} \,\,\, k \geq 1.
\end{cases}
\end{align}
For every $k\geq 0$, the problem \eqref{1.13} has an increasing sequence of eigenvalues $\Lambda_{j,k}$, $j=1,2,\dots$ and an associated eigenfunction
has exactly $j-1$ zeros in $(0,\infty)$.

We claim that $\Lambda_{j,k}\geq 1$ for all $j \geq 2$ and any $k \geq 0$. Indeed, using the transformation \eqref{1.8a}, as in the previous theorem we get
\begin{align} \label{1.14}
\begin{cases}
 -\eta_k''(r) - \frac{M-1}{r}\eta_k'(r) + \frac{4\mu_k}{(2+\a)^2}\frac{\eta_k(r)}{r^2} =
\Lambda M(M+2)\,\frac{\eta_k(r)}{(1+r^2)^2}\,, \quad \text{in}\,\, (0,\infty) \\
\eta_k \in \widetilde{\mathcal{E}} \\
 \eta_k'(0)=0 \,\,\, \text{if} \,\,\, k=0 \quad \text{and} \quad \eta_k(0)=0 \,\,\, \text{if} \,\,\, k \geq 1
\end{cases}
\end{align}
and, for any fixed $\Lambda < 1$, we consider the more general equation
\begin{align} \label{1.15}
 -\eta''(r) - \frac{M-1}{r}\eta'(r) + \b\frac{\eta(r)}{r^2} =
\Lambda M(M+2)\,\frac{\eta(r)}{(1+r^2)^2}\,, \quad \text{in}\,\, (0,\infty)
\end{align}
which has a sequence of eigenvalues $\{\b_1(\Lambda) > \b_2(\Lambda) > \dots \}$. Since $\Lambda <1$, using the min-max characterization of the eigenvalues
and \eqref{1.11} we infer that
\begin{equation}\label{1.16}
M-1 = \b_1(1) > \b_1(\Lambda) \qquad \text{and} \qquad  0 = \b_2(1) > \b_2(\Lambda).
\end{equation}
Now, let $\psi_{j,k}$ be an eigenfunction of \eqref{1.13} related to $\Lambda_{j,k}$ for some $j \geq 2$, $k \geq 0$ and suppose that $\Lambda_{j,k} < 1$. Then
$\psi_{j,k}(r^{\frac{2}{2 + \a}})$ is an eigenfunction of \eqref{1.14} which changes sign related to $\beta(\Lambda_{j,k}) = \frac{4\mu_k}{(2+\a)^2} \geq 0$. On the other hand, by \eqref{1.16} with $\Lambda = \Lambda_{j,k} < 1$ we have that $\beta(\Lambda_{j,k})\leq \b_2(\L) < 0$, and a contradiction arises. Hence the claim is proved.

By the previous claim it follows that $j=1$ and
therefore only the first eigenvalues of \eqref{1.13} may contribute to the Morse index. When $\a$ is an even integer, from \eqref{1.11b}, we already know that
$$ \psi_k(r) = \frac{r^k}{(1+r^{2+\a})^{\frac{N+\a}{2+\a}}} = \frac{r^k}{(1+r^{2+\a})^{\frac{N+2(k-1)}{2+\a}}} $$
is the first eigenfunction of \eqref{1.13}, for $k=\frac{\a+2}{2}$, with eigenvalue $\Lambda = 1$. However, for general $\a>0$ and $k \geq 0$, we can check that the first eigenfunction of \eqref{1.13} is still
\begin{equation} \label{1.17}
\psi_{1,k}(r) = \frac{r^k}{(1+r^{2+\a})^{\frac{N+2(k-1)}{2+\a}}}
\end{equation}
and the first eigenvalue of \eqref{1.13} is
\begin{equation}
\label{2.16}
\Lambda_{1,k} = \frac{(N-2+2k)(N+\a+2k)}{(N+2+2\a)(N+\a)}.
\end{equation}
A straightforward computation shows that
\begin{equation*}
\Lambda_{1,k} < 1 \quad \Leftrightarrow \quad k < \frac{\a+2}{2}.
\end{equation*}
Therefore, the eigenvalues of \eqref{1.12} that satisfy $\Lambda < 1$ are precisely $\Lambda_{1,k}$ for $k < \frac{\a+2}{2}$ and the eigenfunctions of \eqref{1.12} related to $\Lambda_{1,k}$ are linear combinations of functions of the form
\begin{equation}
\label{1.18}
V=\frac{|x|^k}{(1+|x|^{2+\a})^{\frac{N+2(k-1)}{2+\a}}} Y_k\left(\frac{x}{|x|}\right) \qquad \text{where}  \quad Y_k \in \mathbb{Y}_k(\R^N).
\end{equation}
Since dim$(\mathbb{Y}_k(\R^N)) = \frac{(N+2k-2)(N+k-3)!}{(N-2)!\,k!}$ the proof is now complete.
\end{proof}

\sezione{The approximated problem}\label{s3}
In this section we consider the problem
\begin{equation}
\label{p-epsilon}
\left\{
\begin{array}{ll}
-\Delta u= C(\alpha)|x|^{\a}\left(u+U_{\a}(\frac 1 \e)\right)^{p_{\a}},&\text{ in } B_{\frac 1{\e}}(0), \\
u\geq 0 &\text{ in } B_{\frac 1{\e}}(0), \\
u=0,&\text{ on } \partial B_{\frac 1{\e}}(0) ,
\end{array}
\right.
\end{equation}
where $\a>0$ is fixed and $B_{\frac 1{\e}}(0)$ denotes
the ball of radius $\frac 1{\e}$ centered at the origin.
We let
\begin{equation}\label{w-epsilon}
u_{\e,\a}(x)
=U_{\a}(x)-U_{\a}\left(\frac 1 \e\right)=\frac 1{\left(1+|x|^{2+\a}\right)^{\frac{N-2}{2+\a}}}-\frac {\e^{N-2}}{\left(1+\e^{2+\a}\right)^{\frac{N-2}{2+\a}}}
\end{equation}
where $U_{\a}(x)$ is as defined in \eqref{i9}. We have
\begin{lemma}\label{lemma-radial-nondegeneracy}
For any $\a\geq 0$ and for any $0<\e<1$ sufficiently small, the function $u_{\e,\a}$ is a radial solution of \eqref{p-epsilon} which is nondegenerate in the space of the radial functions.
\end{lemma}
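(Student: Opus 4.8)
The plan is to verify directly that $u_{\e,\a}$ solves \eqref{p-epsilon}, and then to deduce nondegeneracy in the radial class from Theorem \ref{linearized} together with a careful discussion of the boundary condition. For the first part, observe that by definition $u_{\e,\a}=U_\a-U_\a(1/\e)$, so $-\Delta u_{\e,\a}=-\Delta U_\a=C(\a)|x|^\a U_\a^{p_\a}=C(\a)|x|^\a\big(u_{\e,\a}+U_\a(1/\e)\big)^{p_\a}$, which is exactly the equation in \eqref{p-epsilon}. Since $U_\a$ is radially strictly decreasing (from \eqref{i9}), we have $u_{\e,\a}(x)=U_\a(|x|)-U_\a(1/\e)\ge 0$ on $B_{1/\e}(0)$ with equality precisely on the boundary, so both the sign condition and the Dirichlet condition $u_{\e,\a}=0$ on $\partial B_{1/\e}(0)$ hold. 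Thus $u_{\e,\a}$ is a radial solution of \eqref{p-epsilon} for every $\e\in(0,1)$; no smallness of $\e$ is needed here.

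For the nondegeneracy, I would consider the linearized operator at $u_{\e,\a}$ acting on radial functions: a radial $v$ is in the kernel iff
$$
-\Delta v=C(\a)p_\a|x|^\a\big(u_{\e,\a}+U_\a(1/\e)\big)^{p_\a-1}v=C(\a)p_\a|x|^\a U_\a^{p_\a-1}v \quad\text{in }B_{1/\e}(0),\qquad v=0\text{ on }\partial B_{1/\e}(0).
$$
The key point is that this is exactly the radial ($k=0$) component of equation \eqref{1.4}, but now posed on the bounded ball with a Dirichlet condition. By the reduction in the proof of Theorem \ref{linearized} (specializing to $k=0$, i.e. $\mu_0=0$), the radial solutions of the \emph{equation} on $(0,\infty)$ form at most a one-dimensional space, and after the change of variables $r\mapsto r^{2/(2+\a)}$ the relevant ODE \eqref{1.10} with $\b=0$ has, by \eqref{1.11}, the two-dimensional solution space of the classical linearized operator for $-\Delta U=U^{(M+2)/(M-2)}$ spanned by $\eta_2(r)=\frac{1-r^2}{(1+r^2)^{M/2}}$ and a second, singular solution behaving like $r^{2-M}$ near $0$. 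Translating back, the regular radial solution on $(0,\infty)$ is (a multiple of) $Z$ from \eqref{i12}, while the second solution is singular at the origin and hence not admissible. So any radial element of the kernel on $B_{1/\e}(0)$ must be proportional to $Z$; but $Z(x)=\frac{1-|x|^{2+\a}}{(1+|x|^{2+\a})^{(N+\a)/(2+\a)}}$ vanishes only on the unit sphere $|x|=1$, not on $|x|=1/\e$ when $\e<1$. Therefore the kernel is trivial, i.e. $u_{\e,\a}$ is nondegenerate among radial functions.

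The main obstacle, and the only place where one must be slightly careful, is the passage from "the equation on $(0,\infty)$ has a one-dimensional regular solution space" to "the Dirichlet problem on the ball has trivial kernel": one must rule out the singular solution $\sim r^{2-M}$ as a legitimate competitor. This is handled by noting that any $v$ in the kernel is required to lie in $D^{1,2}$ locally — more precisely, finite energy $\int_0^{1/\e} r^{N-1}|v'|^2\,dr<\infty$ forces $v$ to be the regular solution near $0$ (the singular branch has non-integrable energy since $M>2$), exactly as in the definition of $\mathcal E$ in \eqref{def-E}. Once $v$ is forced to be a multiple of $Z$, the boundary condition $Z(1/\e)\ne 0$ for $\e<1$ finishes the argument. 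An alternative, self-contained route avoiding the ODE classification is to combine the strict monotonicity of the first radial eigenvalue in the domain (the ball $B_{1/\e}$ strictly contains... rather is contained in $\R^N$, so eigenvalues are larger) with the fact, from Corollary \ref{cor-1}'s computation, that $\Lambda=1$ is attained on $\R^N$ only by sign-changing radial eigenfunctions with their node exactly at $|x|=1$; since that node sits strictly inside $B_{1/\e}$ for $\e<1$, no Dirichlet eigenfunction on $B_{1/\e}$ can have eigenvalue $1$, which is precisely the nondegeneracy statement.
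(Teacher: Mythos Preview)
Your main argument is correct and rests on the same key observation as the paper's: the explicit function $Z(r)=\frac{1-r^{2+\a}}{(1+r^{2+\a})^{(N+\a)/(2+\a)}}$ solves the linearized radial ODE and satisfies $Z(1/\e)\ne 0$ for $\e<1$. The implementation, however, is genuinely different. You argue via Frobenius theory: the second-order ODE has a two-dimensional solution space near $r=0$, with indices $0$ and $2-N$; the singular branch $\sim r^{2-N}$ is excluded by finite energy (or by $\varphi'(0)=0$), so any admissible $\varphi$ is a scalar multiple of $Z$, and the boundary condition then forces the scalar to vanish. The paper instead runs a Lagrange/Wronskian identity: it multiplies the equation for $\varphi$ by $Z$ and the equation for $Z$ by $\varphi$, integrates over $(0,1/\e)$, and subtracts. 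The boundary term at $r=1/\e$ yields $\varphi'(1/\e)\,Z(1/\e)=0$, hence $\varphi'(1/\e)=0$; combined with $\varphi(1/\e)=0$ and ODE uniqueness at a regular point, this gives $\varphi\equiv 0$. The paper's route is slightly more self-contained (it never needs to discuss the singular solution), while yours makes the structure of the kernel more explicit; either is fine.

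One caution: your closing ``alternative route'' via domain monotonicity of eigenvalues is not quite complete as written. Monotonicity gives $\Lambda_{2,0}^\e>\Lambda_{2,0}=1$, but it does \emph{not} by itself prevent the first radial Dirichlet eigenvalue $\Lambda_{1,0}^\e$ from equalling $1$ for some $\e$, since $\Lambda_{1,0}^\e$ increases from $\Lambda_{1,0}=\frac{N-2}{N+2+2\a}<1$ as $\e$ grows. To close that argument you would need an additional step (e.g.\ a Sturm comparison showing that any positive Dirichlet eigenfunction on $(0,1/\e)$ with eigenvalue $1$ would force a zero of $Z$ beyond $r=1$, which is impossible). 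Since your main argument already works, I would simply drop this alternative.
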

\begin{proof}
For any $\e>0$ and for any $x\in B_{\frac 1{\e}}(0)$  it is immediate to check that $u_{\e,\a}$ is a radial solution to \eqref{p-epsilon}.
The function $u_{\e,\a}$ is radially nondegenerate if the linearized problem
\begin{equation}
\label{L-epsilon}
\left\{
\begin{array}{ll}
-\Delta \varphi= p_{\a}C(\alpha)|x|^{\a}\left(u_{\e,\a}+U_{\a}(\frac 1 \e)\right)^{p_{\a}-1}\varphi,&\text{ in } B_{\frac 1{\e}}(0), \\
\varphi=0,&\text{ on } \partial B_{\frac 1{\e}}(0) ,
\end{array}
\right.
\end{equation}
does not have radial nontrivial solutions. Using \eqref{w-epsilon} we can rewrite \eqref{L-epsilon} in radial coordinates, i.e.
\begin{equation}
\label{L-rad-epsilon}
\left\{
\begin{array}{ll}
- (r^{N-1}\varphi')'= p_{\a}C(\alpha)\frac {r^{\a+N-1}}{\left(1+r^{2+\a}\right)^2}\, \varphi,&\text{ in } (0,\frac 1{\e}), \\
\varphi'(0)=0,\quad \varphi(\frac 1{\e})=0.
\end{array}
\right.
\end{equation}
Observe that the function $z(r):=\frac{1-r^{2+\a}}{\left(1+r^{2+\a}\right)^{\frac{N+\a}{2+\a}}}$ satisfies the linearized equation
\begin{equation}\label{L-rad}
- (r^{N-1}z')'= p_{\a}C(\alpha)\frac {r^{\a+N-1}}{\left(1+r^{2+\a}\right)^2}\, z,\text{ in } \R_+, \qquad z'(0)=0
\end{equation}
but $z(\frac 1{\e}) \neq 0$, for $\e < 1$.

Multiplying \eqref{L-rad-epsilon} by $z$, \eqref{L-rad} by $\varphi$ and integrating we get
$$ \varphi'\left(\frac 1{\e}\right) = 0$$
and since $\varphi(\frac 1{\e}) = 0$ we must have $\varphi \equiv 0$.
 This implies that \eqref{L-rad-epsilon} does not have any nontrivial solution and hence $u_{\e,\a}$ is radially nondegenerate.
\end{proof}

\noindent Let $\beta>0$ be such that $\frac{N}{N+2}(N-2)< \b <N-2$.
For every $g\in L^{\infty}(\R^N)$, we define the weighted norm
\begin{equation}
\label{norm-beta}
\nor g\nor_{\b}:=\sup_{x\in \R^N}(1+|x|)^{\b}|g(x)|.
\end{equation}
Set $L^{\infty}_{\b}(\R^N):=\{g\in L^{\infty}(\R^N) \text{ such that }\exists\, C>0 \text{ and }\nor g\nor_{\b}<C\}$ and
\begin{equation}\label{X}
X=D^{1,2}(\R^N)\cap L^{\infty}_{\b}(\R^N).
\end{equation}
$X$ is a Banach space with the norm
\begin{equation}\label{norm-x}
\nor g\nor_X:=\max\{\nor g\nor_{1,2},\nor g\nor_{\b}\}
\end{equation}
where $\nor \cdot \nor_{1,2}$ denotes the usual norm in $D^{1,2}(\R^N)$, i.e. $\nor g\nor_{1,2}=\left( \int_{\R^N}|\na g|^2\, dx\right)^{\frac 12}$ for $g\in D^{1,2}(\R^N)$.
\noindent Now  we are in position to  state the following
\begin{proposition}\label{p3.2}
Let $\e_n$ and $\a_n$ be sequences such that $\e_n\to 0$ and $\a_n\to  \a>0$ as $n\to +\infty$. Denoting by $u_{n}:=u_{\e_n,\a_n}$, with $ u_{\e,\a}$ as defined in \eqref{w-epsilon}, we have that
\begin{equation}\label{convergence}
\nor u_{n}-U_{\a}\nor _X\to 0 \qquad \text{as} \,\,\, n \to + \infty
\end{equation}
where $u_{n}$ is assumed to be extended by zero outside $B_{\frac{1}{\e_n}}$.
\end{proposition}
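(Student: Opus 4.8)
The plan is to use the explicit formula \eqref{w-epsilon} for $u_n=u_{\e_n,\a_n}$ and to estimate the two components of the norm \eqref{norm-x} separately, reducing \eqref{convergence} to three facts, each of which uses only $0<\b<N-2$: (i) the map $\gamma\mapsto U_\gamma$ is continuous from a neighbourhood of $\a$ into $X$; (ii) the subtracted constant $U_{\a_n}(\frac1{\e_n})$ is negligible against the weight $(1+|x|)^\b$ on $B_{1/\e_n}$; (iii) the tail of $U_\a$ outside $B_{1/\e_n}$ is small in $X$. First I would record the pointwise identity
\[
u_n(x)-U_\a(x)=\big(U_{\a_n}(x)-U_\a(x)\big)-U_{\a_n}\big(\tfrac1{\e_n}\big)\ \ \text{for}\ |x|<\tfrac1{\e_n},\qquad u_n(x)-U_\a(x)=-U_\a(x)\ \ \text{for}\ |x|\ge\tfrac1{\e_n},
\]
observing that $u_n$ is continuous across $\partial B_{1/\e_n}$ exactly because the constant $U_{\a_n}(\frac1{\e_n})$ has been subtracted, so that $u_n-U_\a$ is a Lipschitz (hence $D^{1,2}$) function whose distributional gradient carries no jump on the sphere.

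For the Dirichlet component, the constant does not affect the gradient, so $\na(u_n-U_\a)=(\na U_{\a_n}-\na U_\a)\,\chi_{B_{1/\e_n}}-\na U_\a\,\chi_{\R^N\setminus B_{1/\e_n}}$ and hence
\[
\|u_n-U_\a\|_{1,2}^2=\int_{B_{1/\e_n}}|\na U_{\a_n}-\na U_\a|^2\,dx+\int_{\R^N\setminus B_{1/\e_n}}|\na U_\a|^2\,dx\le\|U_{\a_n}-U_\a\|_{1,2}^2+\int_{\R^N\setminus B_{1/\e_n}}|\na U_\a|^2\,dx.
\]
The last integral tends to $0$ because $U_\a\in D^{1,2}(\R^N)$ and $\frac1{\e_n}\to+\infty$. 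For the first term one computes $|\na U_\gamma(x)|=(N-2)|x|^{1+\gamma}(1+|x|^{2+\gamma})^{-\frac{N+\gamma}{2+\gamma}}$, which for $\gamma>0$ is bounded by $N-2$ when $|x|\le1$ and by $(N-2)|x|^{1-N}$ when $|x|\ge1$; this majorant is independent of $\gamma$ and lies in $L^2(\R^N)$, so since $\na U_{\a_n}(x)\to\na U_\a(x)$ for every $x\neq0$, dominated convergence gives $\|U_{\a_n}-U_\a\|_{1,2}\to0$, and therefore $\|u_n-U_\a\|_{1,2}\to0$.

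For the weighted sup component, the decomposition above and the triangle inequality give
\[
\|u_n-U_\a\|_\b\ \le\ \|U_{\a_n}-U_\a\|_\b\ +\ \big(1+\tfrac1{\e_n}\big)^\b\,U_{\a_n}\big(\tfrac1{\e_n}\big)\ +\ \sup_{|x|\ge 1/\e_n}(1+|x|)^\b\,U_\a(x).
\]
Since $U_\gamma(x)\le|x|^{-(N-2)}$ for $|x|\ge1$, we get $(1+|x|)^\b U_\gamma(x)\le2^\b|x|^{\b-(N-2)}$, a decreasing function of $|x|$ because $\b<N-2$; hence the last supremum is $\le 2^\b\e_n^{\,N-2-\b}\to0$. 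From \eqref{w-epsilon}, $U_{\a_n}(\frac1{\e_n})\le\e_n^{N-2}$ while $(1+\frac1{\e_n})^\b\le2^\b\e_n^{-\b}$, so the middle term is likewise $\le2^\b\e_n^{\,N-2-\b}\to0$. Finally, to see $\|U_{\a_n}-U_\a\|_\b\to0$, fix $\d>0$ and pick $R$ so large that $(1+|x|)^\b|U_{\a_n}(x)-U_\a(x)|\le 2^{\b+1}|x|^{\b-(N-2)}<\d/2$ for all $|x|\ge R$ and all $n$, while on $\overline{B_R}$ one has $U_{\a_n}\to U_\a$ uniformly by the uniform continuity of $(\gamma,r)\mapsto(1+r^{2+\gamma})^{-\frac{N-2}{2+\gamma}}$ on the compact set $[\a/2,2\a]\times[0,R]$. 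Combining the two components yields \eqref{convergence}.

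The argument presents no deep obstacle, but the most delicate point is the behaviour near the ``edge'' $|x|\sim\frac1{\e_n}$: there the decay of $U_\a$, the growth of the weight $(1+|x|)^\b$, and the size of the subtracted constant $U_{\a_n}(\frac1{\e_n})$ all compete, and the estimates above show that each of the three resulting contributions is of order $\e_n^{\,N-2-\b}$ — which vanishes precisely because of the standing assumption $\b<N-2$ (equivalently the upper constraint on $\b$ in the definition \eqref{X}--\eqref{norm-x} of $X$), so this is exactly where that hypothesis is used.
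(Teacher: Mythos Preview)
Your proof is correct and follows essentially the same approach as the paper's: the same decomposition of $u_n-U_\a$ into the pieces $U_{\a_n}-U_\a$, the subtracted constant, and the tail of $U_\a$, handled separately for the $D^{1,2}$ and $L^\infty_\b$ components. Your version is in fact more detailed --- you supply an explicit $L^2$ majorant for $|\na U_\gamma|$ to justify dominated convergence, and an $\e/2$ argument for $\|U_{\a_n}-U_\a\|_\b\to0$ --- whereas the paper simply asserts the first integral vanishes ``by the decay of $|\na U_\a|$'' and bounds the weighted-sup term by $O(|\a-\a_n|)+O(\e_n^{N-2-\b})$ via the mean value theorem.
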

\begin{proof}
By \eqref{norm-x} we need to prove that $\nor u_{n}-U_{\a}\nor_{1,2}\to 0$  and $\nor u_{n}-U_{\a}\nor_{\b}\to 0$ as $n\to +\infty$. By the definition of $u_{n}$, letting $B_n:=B_{\frac{1}{\e_n}}$, we have
$$\int_{\R^N}|\na u_{n}-\na U_{\a}|^2\, dx=\int_{B_n}|\na U_{\a_n} - \na U_{\a}|^2\, dx + \int_{\R^N\setminus B_n}|\na U_{\a}|^2\, dx .$$
Since $U_{\a}\in D^{1,2}(\R^N)$ it follows that
$$\int_{\R^N\setminus B_n}|\na U_{\a}|^2\, dx \to 0 \qquad \text{as} \,\,\, n \to +\infty  $$
and by the decay of $|\na U_{\a}|$
$$ \int_{B_n}|\na U_{\a_n} - \na U_{\a}|^2\, dx \to 0 \qquad \text{as} \,\,\, n \to +\infty .$$
Finally, using the definitions of $u_{n}$, $U_{\a}$ and the mean value Theorem we have
\begin{align*}
\nor u_{n}-U_{\a}\nor_{\b}&=\sup_{x\in \R^N }(1+|x|)^{\b}|u_{n}(x)-U_{\a}(x)|\\
\begin{split}
 &\leq \sup_{x\in B_n} (1+|x|)^{\b}\left| U_{\a_n}(x) - U_{\a}(x) \right| +\sup_{x\in B_n} (1+|x|)^{\b}U_{\a_n}\left(\frac 1 {\e_n}\right) \\
 &\qquad +\sup _{x\in \R^N\setminus B_n}\frac{(1+|x|)^{\b}} {\left(1+|x|^{2+\a}\right)^{\frac{N-2}{2+\a}}}
\end{split} \\
&\leq O(|\a-\a_n|) + O(\e_n^{N-2-\b})
\end{align*}
and since $\b<N-2$, the proposition follows.
\end{proof}

\subsection{Convergence of the spectrum}\label{s4}
Let us consider the linearized problem associated with \eqref{p-epsilon}, i.e.
\begin{equation}\label{aut-epsilon}
\left\{\begin{array}{ll}
-\Delta v=p_{\a}C(\a)|x|^{\a}\left(u_{\e,\a}+U_{\a}(\frac 1 \e)\right)^{p_{\a}-1}v,&\text{ in } B_{\frac 1{\e}}(0), \\
v=0,&\text{ on } \partial B_{\frac 1{\e}}(0).
\end{array}
\right.
\end{equation}
Recalling that $u_{\e,\a}=U_{\a}-U_{\a}(\frac 1 \e)$ we can
rewrite \eqref{aut-epsilon} in the following way,
\begin{equation}\label{aut}
\left\{\begin{array}{ll}
-\Delta v=p_{\a}C(\a)\frac{|x|^{\a}}{\left(1+|x|^{2+\a}\right)^2}v,&\text{ in } B_{\frac 1{\e}}(0), \\
v=0,&\text{ on } \partial B_{\frac 1{\e}}(0).
\end{array}
\right.
\end{equation}
We can decompose problem \eqref{aut} in radial part and angular
part using the spherical harmonic functions (as in Section
\ref{s2}), getting that $v$ is a solution of \eqref{aut} if and
only if $v_k(r):=\int_{S^{N-1}}v(r,\theta)Y_k(\theta)\, d\theta$
is a solution of
\begin{equation}\label{a-epsilon}
\left\{\begin{array}{ll} -v_k''-\frac {N-1}r
v'_k+\frac{\mu_k}{r^2}
v_k=p_{\a}C(\a)\frac{r^{\a}}{\left(1+r^{2+\a}\right)^2}v_k
,&\text{ in } (0,\frac 1{\e}), \\
v_k'(0)=0=v_k(\frac 1{\e}) \,\,\, \text{if} \,\,\, k=0 \quad
\text{and} \quad v_k(0)=0=v_k(\frac 1{\e}) \,\,\, \text{if} \,\,\,
k \geq 1
\end{array}
\right.
\end{equation}
for some $\mu_k=k(N+k-2)$, where $Y_k(\theta)$ denotes the $k$-th
spherical harmonic function. In this way we have that all the
eigenfunctions of \eqref{aut-epsilon} are given by
$v_k(r)Y_k(\theta)$, if $v_k$ is a solution of
\eqref{a-epsilon}.\\
By Lemma \ref{lemma-radial-nondegeneracy} we get that \eqref{a-epsilon} admits a solution only if $k \neq 0$.\\
Let us introduce the following eigenvalue problem,
\begin{equation}\label{b3-epsilon}
\left\{\begin{array}{ll}
-z''-\frac {N-1}r z'-p_{\a}C(\a)\frac{r^{\a}}{\left(1+r^{2+\a}\right)^2}z=\L\frac z{r^2},&\text{ in } (0,\frac 1{\e}), \\
z(\frac 1{\e})=0\quad z\in L^{\infty}(0,\frac 1{\e})
\end{array}
\right.
\end{equation}
which admits a sequence of eigenvalues $\L_h^{\e}(\a)$ which are simple.
Observe that if $z\in L^{\infty}(0,\frac 1{\e})$ is an eigenfunction of \eqref{b3-epsilon} with $\L\ne0$, then $z(0)=0$. \\
Then \eqref{a-epsilon} is equivalent to find $\a>0$ and integers $h,k\ge1$ such that
\begin{equation}\label{c-epsilon}
-\mu_k=\L_h^{\e}(\a)
\end{equation}
We have the following lemma,
\begin{lemma}\label{l4.0}
We have that, for $0<\e<1$,
\begin{equation}\label{l4.1}
\L_1^{\e}(\a)<0
\end{equation}
and
\begin{equation}\label{l4.2}
\L_2^{\e}(\a)>0.
\end{equation}
\end{lemma}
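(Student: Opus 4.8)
The plan is to let the whole argument be driven by the explicit function
$$z_0(r):=\frac{1-r^{2+\a}}{(1+r^{2+\a})^{\frac{N+\a}{2+\a}}},$$
which by \eqref{L-rad} solves $-(r^{N-1}z_0')'=p_\a C(\a)\frac{r^{N-1+\a}}{(1+r^{2+\a})^2}z_0$ on $(0,\infty)$ with $z_0'(0)=0$. The features I would use are: (a) $z_0$ solves the equation in \eqref{b3-epsilon} with $\L=0$; (b) it is bounded at $r=0$, and it is the only solution of that equation (with $\L=0$) which is bounded there, up to a constant, since near the origin the equation is a perturbation of $(r^{N-1}z')'=0$, whose second solution behaves like $r^{2-N}$; (c) $z_0$ changes sign exactly once on $(0,\tfrac1\e)$, namely at $r=1$, which lies in $(0,\tfrac1\e)$ precisely because $\e<1$; (d) $z_0(\tfrac1\e)<0$ and $z_0'(1)=-(2+\a)2^{-\frac{N+\a}{2+\a}}<0$.

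For \eqref{l4.1} I would use $\tilde z_0:=z_0\mathbf{1}_{(0,1)}$, extended by zero on $[1,\tfrac1\e)$, as a competitor in the Rayleigh quotient of \eqref{b3-epsilon}. Since $z_0(1)=0$ and $r^{N-1}z_0'(r)\to0$ as $r\to0^+$, the function $\tilde z_0$ lies in the form domain, and multiplying the equation for $z_0$ by $z_0$ and integrating by parts on $(0,1)$ gives
$$\int_0^{1/\e}\!\left(r^{N-1}|\tilde z_0'|^2-p_\a C(\a)\frac{r^{N-1+\a}}{(1+r^{2+\a})^2}\,\tilde z_0^2\right)dr=0,$$
so the Rayleigh quotient of $\tilde z_0$ vanishes and $\L_1^\e(\a)\le0$. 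To upgrade this to a strict inequality I would observe that $\L_1^\e(\a)=0$ is impossible: a zero eigenvalue would provide an $L^\infty$ eigenfunction of \eqref{b3-epsilon} with $\L=0$, which by (b) must be a multiple of $z_0$, contradicting (d), since $z_0(\tfrac1\e)\ne0$ while the boundary condition requires $z(\tfrac1\e)=0$. Hence $\L_1^\e(\a)<0$.

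For \eqref{l4.2} I would first obtain $\L_2^\e(\a)\ge0$ by domain monotonicity: zero extension embeds the form domain of \eqref{b3-epsilon} on $(0,\tfrac1\e)$ into that of the analogous problem on $(0,\infty)$, so by the min–max characterization each $\L_h^\e(\a)$ dominates the $h$-th eigenvalue $\L_h^0(\a)$ of the limit problem; and the change of variables $r\mapsto r^{2/(2+\a)}$, carried out as in the proof of Theorem \ref{linearized}, turns the limit problem into \eqref{1.10} with $\beta=-\tfrac{4\L}{(2+\a)^2}$, so \eqref{1.11} yields $\L_1^0(\a)=-\tfrac{(2+\a)^2}{4}\big(\tfrac{2(N+\a)}{2+\a}-1\big)<0$ and $\L_2^0(\a)=0$, the latter realized by $z_0$ itself. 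Then the same uniqueness argument as in the previous paragraph rules out $\L_2^\e(\a)=0$, and therefore $\L_2^\e(\a)>0$.

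The step I expect to be the main obstacle is the functional-analytic justification of the limit problem used for \eqref{l4.2}: one must check that the quadratic form attached to \eqref{b3-epsilon} on $(0,\infty)$ is bounded below, has discrete spectrum and admits the min–max formula, and that zero extension really embeds the two form domains. For $N\ge3$ this should follow from Hardy's inequality $\int_0^\infty r^{N-1}|z'|^2\,dr\ge\tfrac{(N-2)^2}{4}\int_0^\infty r^{N-3}z^2\,dr$ together with the boundedness of $r^2 p_\a C(\a)\tfrac{r^\a}{(1+r^{2+\a})^2}$. If one prefers to sidestep the limit problem altogether, both \eqref{l4.1} and \eqref{l4.2} can be obtained by a Sturm-type comparison instead: monitoring $W(r)=r^{N-1}\big(\varphi'z_0-\varphi z_0'\big)$, where $\varphi$ is the positive first eigenfunction (resp. the sign-changing second eigenfunction) of \eqref{b3-epsilon}, and using $W(0^+)=0$, $W'(r)=-\L\,r^{N-3}\varphi z_0$, $z_0(\tfrac1\e)<0$ and $z_0'(1)<0$, one reaches a sign contradiction from $\L_1^\e(\a)\ge0$, resp. $\L_2^\e(\a)\le0$; in that approach the real work is the case analysis according to the position of the zero of $\varphi$ relative to $r=1$.
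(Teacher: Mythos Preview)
Your argument is correct and is essentially the paper's proof: the paper also drives everything with $z_0$ and domain monotonicity, observing for \eqref{l4.1} that $z_0$ is the positive first eigenfunction of \eqref{b3-epsilon} on $(0,1)$ so that $\L_1^{1}(\a)=0$ and hence $\L_1^{\e}(\a)<\L_1^{1}(\a)=0$ for $\e<1$, and for \eqref{l4.2} comparing with the problem \eqref{l4.3} on $(0,\infty)$ where $\L_2(\a)=0$ via the change of variables of Section~\ref{s2}. Your version simply unpacks the monotonicity as an explicit test-function computation and justifies the strict inequalities via uniqueness of the bounded solution at $r=0$, a step the paper leaves implicit.
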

\begin{proof}
Since $Z(r)=\frac{1-r^{2+\a}}{(1+r^{2+\a})^{\frac{N+\a}{2+\a}}}$ is a positive solution to \eqref{a-epsilon} with $\e=1$, we get that $\L_1^{1}(\a)=0$. 
By the monotonicity of the first eigenvalue with respect to $\e$ we get that $\L_1^{\e}(\a)<\L_1^{1}(\a)=0$  and \eqref{l4.1} follows.\\
Concerning \eqref{l4.2}, it follows from the monotonicity of the
eigenvalues with respect to the domain. Let us denote by
$\L=\L_h(\a)$ the eigenvalues of the problem,
\begin{equation}\label{l4.3}
\left\{\begin{array}{ll}
-z''-\frac {N-1}r z'-p_{\a}C(\a)\frac{r^{\a}}{\left(1+r^{2+\a}\right)^2}z=\L\frac z{r^2},&\text{ in } (0,+\infty), \\
z\in\mathcal{E}
\end{array}
\right.
\end{equation}
with $\mathcal{E}$ as in \eqref{def-E}. We get that
$\L_2^{\e}(\a)>\L_2(\a)$. Using \eqref{1.8a}, \eqref{1.15} and
\eqref{1.16} we derive that $\L_2(\a)=0$ which gives the claim.
\end{proof}
By the previous lemma, since $\mu_k>0$, we get that the equation
\eqref{c-epsilon} has solutions only if $h=1$.\\
Let us denote by $\L_1(\a)$ and $z_1$ the first eigenvalue and the
first eigenfunction of problem \eqref{l4.3}. Next lemma studies
the convergence of $\L_1^{\e}(\a)$ as $\e\longrightarrow0$.
\begin{lemma}\label{l4.4}
Let $\e_n$ be a sequence such that $\e_n\to 0$ as $n\to +\infty$.
Let $\L_1^n(\a)$ denote the first eigenvalue of \eqref{b3-epsilon}
in $(0,\frac 1{\e_n})$ (related to the exponent $\a$). Then
\begin{equation}\label{conv-autov}
\L_1^n(\a)\to \L_1(\a)=-\frac{(2N+\a-2)(\a+2)}4\quad \text{ as
}n\to +\infty.
\end{equation}
Moreover the convergence in \eqref{conv-autov} is uniform in $\a$
on compact sets of $(0,+\infty)$.\\
Finally, denoting by $z_n(r)$ the first positive eigenfunction of
\eqref{b3-epsilon} related to $\L_1^n(\a)$ with
$||z_n||_{\infty}=1$, we have that
\begin{equation}\label{conv-autov2}
|\left(z_n(r)\right)'|\leq Cr^{1-N}\quad \quad |z_n(r)|\leq Cr^{2-N},
\end{equation}
\begin{equation}\label{conv-autov3}
z_n\rightarrow
z(r)=\frac{r^\frac{2+\a}2}{\left(1+r^{2+\a}\right)^\frac{N-2}{2+\a}}\quad\hbox{uniformly
on compact sets of }[0, +\infty).
\end{equation}
\end{lemma}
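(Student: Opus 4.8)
The plan is to treat the three assertions in order, using throughout the change of variables $\eqref{1.8a}$, $\eta(r)=z(r^{2/(2+\a)})$. Indeed $\eqref{l4.3}$ is nothing but $\eqref{1.8}$ with $\mu_k$ replaced by $-\L$, so under $\eqref{1.8a}$ it becomes $\eqref{1.10}$ with $\b=\tfrac{-4\L}{(2+\a)^2}$; by $\eqref{1.11}$ the only nontrivial solutions in the energy space occur for $\b\in\{M-1,0\}$, the value $\b_1=M-1$ being the one carrying a positive eigenfunction. Hence the first eigenvalue of $\eqref{l4.3}$ is $\L_1(\a)=-\tfrac{(M-1)(2+\a)^2}{4}=-\tfrac{(2N+\a-2)(\a+2)}{4}$ (using $M=\tfrac{2(N+\a)}{2+\a}$), with first eigenfunction $z(r)=\eta_1\!\left(r^{(2+\a)/2}\right)=\dfrac{r^{(2+\a)/2}}{(1+r^{2+\a})^{M/2}}$; a direct inspection shows $z$ behaves like $r^{(2+\a)/2}$ near $0$ and decays like $r^{-(N-1)-\a/2}$ at infinity.

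For the eigenvalue convergence I would use the Rayleigh quotient. Multiplying $\eqref{b3-epsilon}$ by $r^{N-1}z$ and integrating by parts (the boundary terms vanish: at $r=1/\e$ since $z(1/\e)=0$, and at $r=0$ since a bounded eigenfunction with $\L\ne0$ behaves like $r^{\g}$, $\g>0$, near the origin) one gets
$$
\L_1^\e(\a)=\inf\left\{\frac{\displaystyle\int_0^{1/\e}\Bigl(r^{N-1}|z'|^2-p_\a C(\a)\tfrac{r^{N-1+\a}}{(1+r^{2+\a})^2}z^2\Bigr)dr}{\displaystyle\int_0^{1/\e}r^{N-3}z^2\,dr}\ :\ z\not\equiv0,\ z(1/\e)=0\right\},
$$
and the same formula on $(0,\infty)$ gives $\L_1(\a)$. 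Extending any competitor on $(0,1/\e)$ by zero shows $\L_1^\e(\a)\ge\L_1(\a)$. For the reverse inequality I would insert the truncation $z\,\xi_\e$ of the explicit eigenfunction above, where $\xi_\e$ is a cut-off equal to $1$ on $(0,\tfrac1{2\e})$, vanishing on $(\tfrac1\e,\infty)$, with $|\xi_\e'|\le C\e$; because $z$ decays like $r^{-(N-1)-\a/2}$ every tail contribution is $o(1)$ — in particular $\int r^{N-1}z^2|\xi_\e'|^2=O(\e^{N+\a})$ — so the quotient tends to $\L_1(\a)$, forcing $\L_1^n(\a)\to\L_1(\a)$. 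Since the decay rate and the relevant constants depend continuously, hence uniformly, on $\a$ over compacta, the convergence is uniform on compact subsets of $(0,+\infty)$.

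For the pointwise bounds $\eqref{conv-autov2}$ I would argue by comparison. Write $\eqref{b3-epsilon}$ as $-(r^{N-1}z_n')'=\bigl(r^{N-1}V+\L_1^n r^{N-3}\bigr)z_n$ with $V=p_\a C(\a)\tfrac{r^\a}{(1+r^{2+\a})^2}$, and use $\|z_n\|_\infty=1$ together with the boundedness of $\L_1^n$. Fix $\g$ with $-(N-1)-\a/2<\g<2-N$ (possible since $\a>0$); for $r$ beyond a fixed $R_0$, independent of $n$ (as $\L_1^n$ stays bounded away from $0$ and $r^2V(r)\to0$), the function $r\mapsto C r^{\g}$ is a supersolution of $\eqref{b3-epsilon}$, the zeroth-order coefficient $\tfrac{-\L_1^n}{r^2}-V$ is nonnegative there, and $Cr^{\g}$ dominates $z_n$ on $\{r=R_0\}$ and on $\{r=1/\e_n\}$; the maximum principle gives $z_n(r)\le C r^{\g}\le C r^{2-N}$ on $[R_0,1/\e_n]$, while $z_n\le1\le R_0^{N-2}r^{2-N}$ on $(0,R_0)$. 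With this decay at hand, integrating $-(r^{N-1}z_n')'=\bigl(r^{N-1}V+\L_1^n r^{N-3}\bigr)z_n$ from $0$ (where $r^{N-1}z_n'\to0$) yields $|r^{N-1}z_n'(r)|\le\int_0^\infty\bigl(s^{N-1}V+|\L_1^n|s^{N-3}\bigr)z_n\,ds\le C$, both integrands being integrable at $0$ (there $z_n\le1$, $N-3>-1$, $V\le Cs^{\a}$) and at $\infty$ (there $z_n\le C s^{\g}$ with $\g<2-N$); this is the bound on $z_n'$. The constants are again uniform for $\a$ on compacta.

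Finally, the bounds just obtained make $\{z_n\}$ precompact in $C^1_{\mathrm{loc}}[0,\infty)$, so along a subsequence $z_n\to z_*$ with $z_*\ge0$, $z_*(0)=0$, $z_*(r)\le C r^{2-N}$ (hence $z_*\in\mathcal E$), and $z_*$ solving $\eqref{l4.3}$ with eigenvalue $\L_1(\a)$ (here $\L_1^n\to\L_1(\a)$ is used to pass to the limit). The one genuinely delicate point — which I expect to be the main obstacle — is that $z_*\not\equiv0$, i.e. that the $L^{\infty}$-mass of $z_n$ does not escape. Let $r_n$ be an interior maximum point of $z_n$, so $z_n(r_n)=1$; the far-field barrier gives $\limsup r_n<\infty$, and $r_n\to0$ is impossible, since at an interior maximum $z_n''(r_n)\le0$ while $\eqref{b3-epsilon}$ gives $z_n''(r_n)=-V(r_n)+\tfrac{|\L_1^n|}{r_n^2}\to+\infty$ as $r_n\to0$ (because $V(0^+)=0$). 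Thus $r_n$ stays in a compact subinterval of $(0,\infty)$ and $z_*$ attains there the value $1$; being a nonnegative, nontrivial $\mathcal E$-solution of a linear second order ODE, $z_*$ is strictly positive on $(0,\infty)$ by uniqueness for the ODE, hence it is the first eigenfunction of $\eqref{l4.3}$, which after the normalization $\|z\|_\infty=1$ is exactly the function displayed in $\eqref{conv-autov3}$. Since the limit does not depend on the subsequence, the whole sequence converges, and the $C^1_{\mathrm{loc}}$ bounds upgrade this to uniform convergence on compact sets of $[0,+\infty)$.
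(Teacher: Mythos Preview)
Your argument is correct, and it differs from the paper's in several of the individual steps while reaching the same conclusions.

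For the convergence $\L_1^n(\a)\to\L_1(\a)$ the paper simply invokes domain--monotonicity results from singular Sturm--Liouville theory, whereas you carry out the variational argument by hand (lower bound from extension by zero, upper bound from truncating the explicit limit eigenfunction). Your route is more self-contained and also immediately yields the uniformity in $\a$ on compacta, since every tail term in the truncation is controlled by the explicit decay $z(r)\sim r^{-(N-1)-\a/2}$, which is continuous in $\a$. The paper instead obtains the uniformity by a cross-multiplication identity: it multiplies \eqref{b3-epsilon} by the limit eigenfunction $z$, multiplies \eqref{l4.3} by $z_n$, integrates on $(0,\tfrac1{\e_n})$ and subtracts, obtaining
\[
\bigl(\L_1^n(\a)-\L_1(\a)\bigr)\int_0^{1/\e_n}r^{N-3}z_n z\,dr
= -\Bigl(\tfrac1{\e_n}\Bigr)^{N-1}z_n'\!\Bigl(\tfrac1{\e_n}\Bigr)\,z\!\Bigl(\tfrac1{\e_n}\Bigr)=O(\e_n^{N-2}),
\]
which gives the sharper quantitative rate $|\L_1^n(\a)-\L_1(\a)|=O(\e_n^{N-2})$ uniformly in $\a$.

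For the decay \eqref{conv-autov2} the paper does not use a barrier: it integrates the equation on $(0,r)$ with the crude input $0\le z_n\le1$, obtains a first (suboptimal) bound on $|z_n'|$, and then \emph{bootstraps} by reinserting the resulting bound on $z_n$ into the integral, reaching $|z_n'|\le Cr^{1-N}$ after finitely many steps. Your single-step comparison with $r^{\g}$ is cleaner; the choice $-(N-1)-\tfrac{\a}{2}<\g<2-N$ is exactly what makes $-\g(\g+N-2)>\L_1(\a)$ strictly, so that the supersolution inequality survives the perturbation $\L_1^n\to\L_1$ and the vanishing of $r^2V(r)$, for $n$ large and $r\ge R_0$ independent of $n$.

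For the non-triviality of the limit, the paper only remarks that the maximum point of $z_n$ stays bounded (from \eqref{conv-autov2}) and appeals to uniform convergence on compacta of $[0,\infty)$. Your exclusion of $r_n\to0$ via the second-derivative sign at the maximum is more explicit and avoids any subtlety about equicontinuity near the origin.

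One incidental remark: your formula for the limit eigenfunction has denominator $(1+r^{2+\a})^{M/2}=(1+r^{2+\a})^{(N+\a)/(2+\a)}$, which is the correct one coming from $\eta_1$ in \eqref{1.11}; the exponent $(N-2)/(2+\a)$ printed in \eqref{conv-autov3} appears to be a typographical slip in the statement.
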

\begin{proof}
The convergence in \eqref{conv-autov} follows from the dependence on the domain of
the eigenvalues or from the Sturm-Liouville theory (see \cite[Theorems 5.3 and 6.4]{BEWZ93}
for example).\\
Moreover, by \eqref{1.8}-\eqref{1.8ab}, it follows that
$\L_1(\a)=-(M-1)\frac{(\a+2)^2}4=-\frac{(2N+\a-2)(\a+2)}4$.\\
 Let $z_n(r)$ be the first positive eigenfunction of \eqref{b3-epsilon}
related to $\L_1^n(\a)$ and normalized with the $L^{\infty}$-norm.
So $z_n$ satisfies
\begin{equation}\label{b4-epsilon}
\left\{\begin{array}{ll}
-z_n''-\frac {N-1}r z_n'-p_{\a}C(\a)\frac{r^{\a}}{\left(1+r^{2+\a}\right)^2}z_n=
\L_1^n(\a)\frac{z_n}{r^2},&\text{ in } (0,\frac 1{\e_n}), \\
z_n(\frac 1{\e_n})=0,\quad ||z_n||_\infty=1
\end{array}
\right.
\end{equation}
Moreover we have that, for $r$ large enough and $\a>0$,
\begin{equation}\label{d1}
p_{\a}C(\a)\frac{r^{N-1+\a}}
{\left(1+r^{2+\a}\right)^2}+\L_1^n(\a)r^{N-3}<0,
\end{equation}
where we used that $\L_1^n(\a)<-C$ for $n$ large.\\
Integrating \eqref{b4-epsilon} on $(r,\frac 1{\e_n})$ then we get
\begin{equation}\nonumber
r^{N-1}z_n'(r)=\left(\frac 1{\e_n}\right)^{N-1} z_n'\left(\frac 1{\e_n}\right)+p_{\a}C(\a)\int_{r}^{\frac 1{\e_n}}
\frac{s^{N-1+\a}}
{\left(1+s^{2+\a}\right)^2}z_n(s)ds +\L_1^n(\a)\int_{r}^{\frac 1{\e_n}} s^{N-3}z_n(s)\,
ds
\end{equation}
and since $0\le z_n(r)\leq 1$ and $z_n'\left(\frac 1{\e_n}\right)<0$ this implies
\begin{equation}\label{d2}
z'(r)<0\quad\hbox{for }r\hbox{ large enough}.
\end{equation}
Integrating \eqref{b4-epsilon} on $(0,r)$  we get
\begin{equation}\label{l-2}
-r^{N-1}z_n'(r)= p_{\a}C(\a)\int_0^{r}
\frac{s^{N-1+\a}}
{\left(1+s^{2+\a}\right)^2}z_n(s)+\L_1^n(\a)\int_0^{r}s^{N-3}z_n(s)\,
ds.
\end{equation}
Observe that since $\L_1^n(\a)<0$ and $0\le z_n(r)\leq 1$ we get, using \eqref{d2}
\begin{equation}\label{l-3}
\left|z_n'(r)\right|\leq \frac{C}{r^{N-1}}\left\{
\begin{array}{ll}
C & \text{ if }N<4+\a\\
C+\log r & \text{ if }N=4+\a\\
C+r^{N-4-\a} & \text{ if }N>4+\a
\end{array}\right.
\end{equation}
If $N<4+\a$ we get the optimal decay for $z_n(r)$ and $\left(z_n(r)\right)'$, i.e.
\begin{equation}\label{l-4}
\left|z_n'(r)\right|\leq Cr^{1-N}\quad \quad z_n(r)\leq Cr^{2-N}
\end{equation}
if $r$ is large enough. If else $N\ge4+\a$, inserting \eqref{l-3} into \eqref{l-2} and iterating the previous procedure,
after a finite number of steps we get \eqref{l-4} for any $n$ and
for any $\a$ on compact sets of $(0,+\infty)$.
This shows \eqref{conv-autov2}.\\
 From \eqref{b3-epsilon} and \eqref{conv-autov2} we have that
$$\int_0^{+\infty}r^{N-1}|\left(z_n(r)\right)'|^2 \, dr\leq C$$
(where $z_n$ is assumed to be zero for $r>\frac 1{\e_n}$) so that
$z_n\to z$ in $\mathcal{E}$ (weakly), a.e. in $(0,+\infty)$ and
uniformly on compact sets of $[0,+\infty)$. Here $z$ is the first
positive eigenfunction of \eqref{l4.3}. Using \eqref{conv-autov2}
again, we can pass to the limit into \eqref{b3-epsilon} getting
that $z$ is a solution of \eqref{l4.3} corresponding to the
eigenvalue $\L_1(\a)$. Moreover $z\not\equiv 0$ since, from
\eqref{conv-autov2}, the maximum point of $|z_n(r)|$ converges to
a
point $r_0\in [0,+\infty)$ and $|z(r_0)|=1$ from the uniform convergence.\\
We finish the proof by proving the uniform convergence of
$\L_1^n(\a)$ to $\L_1(\a)$ on compact sets. Let us multiply
\eqref{b3-epsilon} by $z$ and we integrate on $(0,\frac1{\e_n})$,
we  multiply \eqref{l4.3} by $ z_n$ and we integrate on
$(0,\frac1{\e_n})$, then we subtract getting
\begin{align*}
&-\left( \frac1{\e_n}\right)^{N-1}(z_n)'\left( \frac1{\e_n}\right)z \left( \frac1{\e_n}\right)
\\ &=\left[\L_1^n(\a)- \L_1(\a)\right]\int_0^{\frac1{\e_n}} r^{N-3}z_n(r)z(r)\, dr.
\end{align*}
Then
$$\left( \frac1{\e_n}\right)^{N-1}(z_n)'\left( \frac1{\e_n}\right) z\left( \frac1{\e_n}\right)=O\left(\e_n^{N-2}\right)$$
as $n\to +\infty$, uniformly in $\a$ on compact sets of $(0,+\infty)$, while
$$\int_0^{\frac1{\e_n}} r^{N-3}z_n(r)z(r)\, dr\rightarrow\int_0^{+\infty}r^{N-3}z^2(r)\, dr>0$$
as $n\to +\infty$, uniformly in $\a$ on compact sets of $(0,+\infty)$. This implies that
$$\sup_{\a\in K}\left|\L_1^n(\a)- \L_1(\a)\right|=o(1)$$
as $n\to +\infty$, for any compact set $K\subset (0,+\infty)$ and this concludes the proof.
\end{proof}
\begin{proposition}
Let $\e_n$ be a sequence such that $\e_n\to 0$ as $n\to +\infty$
and $\L_1^n(\a)$ denotes the first eigenvalue of
\eqref{b3-epsilon} in $(0,\frac1{\e_n})$. Then we have that,
\begin{equation}\label{c1}
\frac{\partial \L_1^n(\a)}{\partial\a}\rightarrow-\frac{N+\a}2<0\quad\hbox{uniformly on the compact set }K\subset(0,+\infty).
\end{equation}
Moreover, for any integer $k\ge1$, the equation
\begin{equation}\label{c2-epsilon}
-k(N+k-2)=-\mu_k=\L_1^{n}(\a)
\end{equation}
admits exactly one solution $\a_k^n$ and
\begin{equation}\label{c2}
\a_k^n\rightarrow2(k-1)\quad\hbox{as } n\rightarrow \infty.
\end{equation}
\end{proposition}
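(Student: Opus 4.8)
The plan is to prove \eqref{c1} by differentiating the simple eigenvalue $\L_1^n(\a)$ through a Feynman--Hellmann (Hadamard-type) formula and then passing to the limit with the uniform estimates of Lemma \ref{l4.4}; once \eqref{c1} is available, the assertions about $\a_k^n$ follow from an inverse-function type argument around $\a=2(k-1)$. It is essential to work with \eqref{b3-epsilon} \emph{as it stands}, since there the interval $(0,1/\e_n)$ does not depend on $\a$ (unlike in the transformed variable $r^{(2+\a)/2}$). So first I would multiply \eqref{b3-epsilon} by $r^{N-1}$ and read it as the weighted eigenvalue problem
\[
-(r^{N-1}z')' - p_\a C(\a)\,\frac{r^{N-1+\a}}{(1+r^{2+\a})^2}\,z \;=\; \L\, r^{N-3}z \quad\text{in }\Big(0,\tfrac1{\e_n}\Big),\qquad z\Big(\tfrac1{\e_n}\Big)=0 ,
\]
whose principal part $-(r^{N-1}z')'$ and whose weight $r^{N-3}$ are $\a$-independent, the only $\a$-dependence sitting in the bounded, analytic coefficient $p_\a C(\a)\,r^{N-1+\a}(1+r^{2+\a})^{-2}$. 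Standard perturbation theory for simple eigenvalues then gives that $\a\mapsto\L_1^n(\a)$ is $C^1$ (indeed real-analytic) and, for the eigenfunction $z_n=z_n(\cdot,\a)$ normalized by $\|z_n\|_\infty=1$,
\[
\frac{\partial \L_1^n(\a)}{\partial \a} \;=\; \frac{-\displaystyle\int_0^{1/\e_n}\frac{\partial}{\partial\a}\!\left(p_\a C(\a)\,\frac{r^{N-1+\a}}{(1+r^{2+\a})^2}\right)z_n^2\,dr}{\displaystyle\int_0^{1/\e_n} r^{N-3}\,z_n^2\,dr}.
\]

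The core of the argument is the passage to the limit in this quotient, uniformly for $\a$ in a compact set $K\subset(0,+\infty)$. By \eqref{conv-autov2} one has $0\le z_n(r)\le\min\{1,Cr^{2-N}\}$, with $C$ independent of $n$ and uniform in $\a\in K$, and a short computation bounds $\big|\partial_\a\big(p_\a C(\a)\,r^{N-1+\a}(1+r^{2+\a})^{-2}\big)\big|$ by $C_K\,r^{N-1+\a}(1+r^{2+\a})^{-2}(1+|\log r|)$. Since $N\ge3$ and $\a>0$, both integrands above are then dominated, uniformly in $n$ and in $\a\in K$, by fixed functions integrable on $(0,\infty)$ (routine check at $r=0$ and at $r=+\infty$). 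Together with the uniform convergence $z_n\to z$ of \eqref{conv-autov3} on compact sets, dominated convergence lets me replace $z_n$ by $z$ and $1/\e_n$ by $+\infty$; the denominator converges to $\int_0^\infty r^{N-3}z^2\,dr>0$ since $z\not\equiv0$. The resulting quotient is precisely the Feynman--Hellmann formula for $\tfrac{d}{d\a}\L_1(\a)$ for the limit problem \eqref{l4.3} with eigenfunction $z$; using $\L_1(\a)=-\tfrac{(2N+\a-2)(\a+2)}{4}$ from \eqref{conv-autov} one gets $\tfrac{d}{d\a}\L_1(\a)=-\tfrac{N+\a}{2}$, i.e. \eqref{c1}, and the convergence is uniform on $K$ by construction.

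To treat $\a_k^n$, set $\a^\ast:=2(k-1)$. Substituting into $\L_1(\a)=-\tfrac{(2N+\a-2)(\a+2)}{4}$ gives $\L_1(\a^\ast)=-k(N+k-2)=-\mu_k$, while $\L_1'<0$ shows $\L_1$ is strictly decreasing on $(0,+\infty)$. On a compact neighborhood $I$ of $\a^\ast$ in $(0,+\infty)$ (a one-sided one when $k=1$, using that the convergences extend to $\a=0$), \eqref{c1} forces $\partial_\a\L_1^n<0$ on $I$ for $n$ large, so $\L_1^n$ is strictly decreasing there, while $\L_1^n\to\L_1$ uniformly on $I$ by \eqref{conv-autov}; hence for $n$ large the continuous, strictly decreasing $\L_1^n$ takes the value $-\mu_k$ exactly once, at a point $\a_k^n\in I$, which settles existence and uniqueness in \eqref{c2-epsilon}. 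Finally, if $\a_k^n\to\bar\a$ along a subsequence, letting $n\to\infty$ in $\L_1^n(\a_k^n)=-\mu_k$ and using the uniform convergence $\L_1^n\to\L_1$ gives $\L_1(\bar\a)=-\mu_k$, hence $\bar\a=\a^\ast$ by injectivity of $\L_1$; since every solution of \eqref{c2-epsilon} must converge to $\a^\ast$ by the same reasoning, global uniqueness holds too and \eqref{c2} follows. The main obstacle is the limit passage in the Feynman--Hellmann formula: everything hinges on the fact that the decay estimates \eqref{conv-autov2} and the convergence \eqref{conv-autov3} of Lemma \ref{l4.4} are uniform in $\a$, so that the dominating functions in the dominated-convergence step can be chosen independent of $n$ and of $\a\in K$.
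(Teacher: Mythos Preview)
Your proof is correct and follows essentially the same route as the paper: both derive the Hadamard/Feynman--Hellmann formula \eqref{c3} for $\partial_\a\L_1^n(\a)$ (the paper obtains it by differentiating \eqref{b3-epsilon}, multiplying crosswise and integrating; you invoke perturbation theory for simple eigenvalues), and both pass to the limit using the uniform decay \eqref{conv-autov2} and convergence \eqref{conv-autov3} of Lemma~\ref{l4.4}. The one notable difference is in identifying the value of the limit: the paper evaluates the quotient of integrals in \eqref{c4} directly, whereas you observe that, since $\L_1^n\to\L_1$ and $\partial_\a\L_1^n$ both converge uniformly on compact sets, the limiting quotient must equal $\L_1'(\a)$, which you then read off from the explicit formula $\L_1(\a)=-\tfrac{(2N+\a-2)(\a+2)}{4}$; this avoids the integral computation. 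Your treatment of $\a_k^n$ is also slightly more detailed than the paper's (which simply asserts $\partial_\a\L_1^n<0$ globally), but the substance is the same.
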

\begin{proof}
By known results (see \cite{K95} for example) we have that if
$z_n$ is the first eigenfunction in \eqref{b3-epsilon} then
$\frac{\partial z_n}{\partial\a}$ and $\frac{\partial
\L_1^n(\a)}{\partial\a}$ are continuous functions. Deriving
\eqref{b3-epsilon} with respect to $\a$ we get that
$\frac{\partial z_n}{\partial\a}$ solves
\begin{equation}\label{b2-epsilon}
\left\{\begin{array}{ll}
-\left(\frac{\partial z_n}{\partial\a}\right)''-\frac {N-1}r \left(\frac{\partial z_n}{\partial\a}\right)'-p_{\a}C(\a)\frac{r^{\a}}{\left(1+r^{2+\a}\right)^2}\frac{\partial z_n}{\partial\a}-
\frac{\partial }{\partial\a}\left(p_{\a}C(\a)\frac{r^{\a}}{\left(1+r^{2+\a}\right)^2}\right) z_n\\
=\frac{\partial \L_1^n(\a)}{\partial\a}\frac { z_n}{r^2}+ \L_1^n(\a)\frac {\frac{\partial z_n}{\partial\a}}{r^2}\qquad\text{ in } (0,\frac 1{\e_n}), \\
\left(\frac{\partial z_n}{\partial\a}\right)(\frac 1{\e_n})=0
\end{array}
\right.
\end{equation}
Multiplying \eqref{b3-epsilon} by $\frac{\partial
z_n}{\partial\a}$ and \eqref{b2-epsilon} by $ z_n$, integrating
and subtracting we get
\begin{equation}\label{c3}
-\int\limits_0^\frac1{\e_n}\frac{\partial }{\partial\a}\left(p_{\a}C(\a)\frac{r^{\a}}{\left(1+r^{2+\a}\right)^2}\right) z_n^2(r)r^{N-1}dr=
\frac{\partial \L_1^n(\a)}{\partial\a}\int\limits_0^\frac1{\e_n}z_n^2(r)r^{N-3}dr
\end{equation}
By Lemma \ref{l4.4} we can pass to the limit in \eqref{c3} and we get
\begin{equation}\label{c4}
\frac{\partial \L_1^n(\a)}{\partial\a}\longrightarrow-\frac{\int\limits_0^{+\infty}\frac{\partial }{\partial\a}\left(p_{\a}C(\a)\frac{r^{\a}}{\left(1+r^{2+\a}\right)^2}\right) z^2(r)r^{N-1}dr}{\int\limits_0^{{+\infty}}z^2(r)r^{N-3}dr}=-\frac{N+\a}2
\end{equation}
uniformly on the compact sets of $[0,\infty)$. This proves \eqref{c1}.\\
Finally, since $\frac{\partial \L_1^n(\a)}{\partial\a}<0$ and
$\L_1^n(\a)\longrightarrow \L_1(\a)$, we have that the equation
\eqref{c2-epsilon} has exactly one solution $\a_k^n$ for any
$k\ge1$. Since $\L_1(\a)=-\frac{(2N+\a-2)(\a+2)}4$ we derive that
$\a_k=\lim\limits_{n\rightarrow \infty}\a_k^n$ solves
\begin{equation}\label{c5}
\frac{(2N+\a_k-2)(\a_k+2)}4=\mu_k=k(N-2+k)
\end{equation}
and then $\a_k=2(k-1)$ which ends the proof.
\end{proof}

\begin{remark}\label{rem-2}\rm
Let $n$ be fixed. From \eqref{a-epsilon}, \eqref{c-epsilon} and \eqref{c2-epsilon} we have that the solution $u_{n,\a}$ of \eqref{p-epsilon} corresponding to $\e_n$ is degenerate if and only if $\a=\a_k^n$ for $k=1,2,\dots$. Moreover, at these points $\a_k^n$ the Morse index of the solution $u_{n,\a_k^n}$ changes.\\
In particular, passing from $\a_k^n-\d$ to $\a_k^n+\d$, for $\d$
small enough, the Morse index of $u_{n,\a}$ increases of the
dimension of the eigenspace ${Ker}(\Delta_{S^{N-1}}+\mu_k)$.
\end{remark}

\subsection{The bifurcation result in the ball}\label{s5}
\noindent To state the bifurcation result we need some notations.
As before we
denote by $u_{n,\a}$ the radial solution of \eqref{p-epsilon} corresponding to the exponent $\a$, for  $\e=\e_n$, and by $B_n$ the ball of radius $\frac 1{\e_n}$ centered at the origin. We will denote by
\begin{equation}\label{S}
\cSn:=
\left\{
\begin{split}
(\a,u_{n,\a})\in  (0,+\infty)\times C^{1,\g}_0(\overline B_n)\, \hbox{ such that } u_{n,\a}
    \\ \hbox{ is the radial}
\hbox{ solution of (\ref{p-epsilon}) defined in \eqref{w-epsilon}}
\end{split}
\right\}
\end{equation}
Let us recall that, given the curve $\cSn$,
a point $(\a_i,u_{n,\a_i})\in \cSn$
is a {\em  nonradial bifurcation point } if in  every neighborhood of
$(\a_i,u_{n,\a_i})$  in $(0,+\infty)\times C^{1,\g}_0(\overline B_n)$
there exists a point $(\a,v_{n,\a})$ such that $v_{n,\a}$ is a nonradial solution of (\ref{p-epsilon}) in $B_n$.
We are in position to state our first result.
\begin{theorem}\label{t-bif}
Let us fix $n\in \N$ and let $\a_k^n$ be as defined in \eqref{c2-epsilon}. Then the points $(\a_k^n, u_{n,\a_k^n})$ are nonradial bifurcation points for
the curve $\cS(n)$.
\end{theorem}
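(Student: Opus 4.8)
The plan is to reduce \eqref{p-epsilon} to an abstract bifurcation equation, restrict it to a subspace of symmetric functions in which the kernel of the linearization becomes one--dimensional, and then apply the Crandall--Rabinowitz theorem. Fix $n\in\N$, an integer $k\ge 1$, and let $\a_k^n$ be as in \eqref{c2-epsilon}. Writing $v=u-u_{n,\a}$, a function $u\in C^{1,\g}_0(\overline B_n)$ solves \eqref{p-epsilon} if and only if
$$F(\a,v):=(u_{n,\a}+v)-T_\a(u_{n,\a}+v)=0,$$
where $T_\a(w):=(-\Delta)^{-1}[\,C(\a)|x|^\a(w+U_\a(1/\e_n))_+^{p_\a}\,]$ with zero Dirichlet datum on $B_n$. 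By Schauder estimates $T_\a$ is compact on $C^{1,\g}_0(\overline B_n)$, it is smooth in a neighborhood of the radial solution (there $w+U_\a(1/\e_n)$ stays in a compact subset of $(0,\infty)$), and $F(\a,0)\equiv0$ because $u_{n,\a}=T_\a(u_{n,\a})$; moreover any zero of $F$ is nonnegative, since the right--hand side of the equation is $\ge0$, hence is a genuine solution of \eqref{p-epsilon}. Now fix the closed subgroup $G=O(N-1)\subset O(N)$ acting on the last $N-1$ coordinates and let $X_G^n$ be the subspace of $G$--invariant functions in $C^{1,\g}_0(\overline B_n)$. Because $-\Delta$, $|x|^\a$ and $(\cdot)_+^{p_\a}$ are $O(N)$--equivariant, $F$ maps $\R\times X_G^n$ into $X_G^n$, and the curve $\cS(n)$ is contained in $\R\times X_G^n$, so bifurcation of nonradial $G$--invariant solutions can be analysed inside $X_G^n$.

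The next step is to show that $D_vF(\a_k^n,0)=\mathrm{Id}-DT_{\a_k^n}(u_{n,\a_k^n})$, restricted to $X_G^n$, is Fredholm of index zero with one--dimensional kernel, and that the eigenvalue crossing is transversal. The kernel of $D_vF(\a_k^n,0)$ on the whole space is the solution space of \eqref{aut-epsilon}. Expanding into spherical harmonics as in Section \ref{s4}, Lemma \ref{l4.0} shows that only the index $h=1$ can contribute; moreover $\L_1^n(\a_k^n)=-\mu_k$ together with the strict monotonicity $\partial_\a\L_1^n<0$ (which makes $\a_k^n$ the unique solution of \eqref{c2-epsilon}) forces the $k'$--sector to be nontrivial only for $k'=k$. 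Hence the full kernel is $\{\,v_k(|x|)\,Y(x/|x|):Y\in\mathbb{Y}_k(\R^N)\,\}$, $v_k$ being the first eigenfunction of \eqref{a-epsilon} at $\a=\a_k^n$. Intersecting with $X_G^n$ and using that the space of $O(N-1)$--invariant harmonic polynomials of degree $k$ in $\R^N$ is one--dimensional (spanned by the zonal, i.e., Gegenbauer, harmonic $Y_k$), one gets $\ker D_vF(\a_k^n,0)\cap X_G^n=\mathrm{span}\{\varphi_0\}$ with $\varphi_0(x)=v_k(|x|)\,Y_k(x/|x|)$. Compactness of $DT_{\a_k^n}$ gives Fredholmness of index zero, so the range has codimension one. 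Finally the transversality condition $D_\a D_vF(\a_k^n,0)[\varphi_0]\notin\mathrm{Range}\,D_vF(\a_k^n,0)|_{X_G^n}$ is equivalent to $\partial_\a\L_1^n(\a_k^n)\ne0$, which holds by \eqref{c1}.

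It then remains to apply the Crandall--Rabinowitz bifurcation theorem in $\R\times X_G^n$ (or, for later use in Theorem \ref{i17}, Rabinowitz' global bifurcation theorem, which yields a continuum): the solution set of $F=0$ near $(\a_k^n,0)$ consists of the trivial branch $\{(\a,0)\}$ and a $C^1$ curve $s\mapsto(\a(s),v(s))$ with $\a(0)=\a_k^n$ and $v(s)=s\varphi_0+o(s)$. For $0<|s|$ small, $v(s)\ne0$ and its $\varphi_0$--component is nonzero, so the corresponding solution $u_{n,\a(s)}+v(s)$ of \eqref{p-epsilon} has a nonconstant degree--$k$ spherical--harmonic component and is therefore nonradial (equivalently, by the radial nondegeneracy of $u_{n,\a}$ from Lemma \ref{lemma-radial-nondegeneracy} the only radial solution near $u_{n,\a(s)}$ is $u_{n,\a(s)}$ itself). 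Since $(\a(s),u_{n,\a(s)}+v(s))\to(\a_k^n,u_{n,\a_k^n})$ as $s\to0$, the pair $(\a_k^n,u_{n,\a_k^n})$ is a nonradial bifurcation point for $\cS(n)$, for every $k\ge1$. An alternative to the transversality computation is a degree argument: by Remark \ref{rem-2} the ($G$--invariant) Morse index of $u_{n,\a}$ jumps by one as $\a$ crosses $\a_k^n$, so the Leray--Schauder index of $F(\a,\cdot)$ at $0$ changes sign and Krasnoselskii's theorem gives bifurcation. The \emph{genuine} obstacle is that on the full space $\ker D_vF(\a_k^n,0)$ has dimension $\frac{(N+2k-2)(N+k-3)!}{(N-2)!\,k!}>1$ in general, which rules out a direct application of Crandall--Rabinowitz; the symmetry reduction to $X_G^n$ is exactly what cures this, and its validity rests on the equivariance of $F$ and on the elementary but essential one--dimensionality of the $G$--invariant harmonics of degree $k$, while the transversality input comes from the delicate derivative estimate \eqref{c1}.
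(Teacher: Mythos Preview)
Your proof is correct and shares the paper's key reduction: restricting to $O(N-1)$--invariant functions so that the kernel of the linearization becomes one--dimensional (the paper cites Smoller--Wasserman \cite{SW86} for this, you invoke the zonal harmonic directly). The difference is in the bifurcation mechanism. The paper argues by Leray--Schauder degree: it uses only that the $\cH_n$--Morse index jumps by one at $\a_k^n$ (your Remark~\ref{rem-2}), so the index $(-1)^{m(\a)}$ flips sign and the homotopy invariance of the degree is violated unless bifurcation occurs. You instead verify the full Crandall--Rabinowitz hypotheses, in particular the transversality $D_\a D_vF(\a_k^n,0)[\varphi_0]\notin\mathrm{Range}$, reducing it to $\partial_\a\Lambda_1^n(\a_k^n)\neq 0$ via \eqref{c1}. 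This equivalence is correct and, in fact, is exactly the content of the computation \eqref{c3}: pairing $D_\a D_vF(\a_k^n,0)[\varphi_0]$ with $\varphi_0$ in $H^1_0$ gives $\partial_\a\Lambda_1^n(\a_k^n)\int z_n^2\,r^{N-3}dr$, so your assertion is justified by the paper's own calculation. What you gain is a local $C^1$ curve of solutions with the explicit tangent $\varphi_0$; what the paper's degree route gains is that it feeds directly into the Rabinowitz global alternative (Theorem~\ref{t-alternative}) without further work, and it needs only an \emph{odd} crossing rather than a transversal one. Your closing remark that the degree/Krasnoselskii argument is an available alternative is exactly the paper's proof.
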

\begin{proof}
Let $\a_k^n$ be as defined in \eqref{c2-epsilon}.
We restrict our
attention to the subspace $\cH_n$ of $C^{1,\g}_0(\overline B_n)$ given by
\begin{equation}\label{2.3}
\cH_n:=\{v\in C^{1,\g}_0(\overline B_n) \, ,\,
\hbox{s.t. }v(x_1,\dots,x_N)=v(g(x_1,\dots, x_{N-1}),x_N)\, ,\hbox{
  for any }\atop g\in O(N-1)\}
\end{equation}
where $O(N-1)$ is the orthogonal group in  $\R^{N-1}$.\\
By a result of Smoller and Wasserman in
\cite{SW86}, we have that for any $k$ the eigenspace  $V_k$ of the
Laplace-Beltrami operator on  $S^{N-1}$, spanned by the
eigenfunctions corresponding to the
eigenvalue  $\mu _k$ which are $O(N-1)$ invariant, is
one-dimensional.  This implies that, (see Section \ref{s4})
\begin{equation}\label{2.4}
m(\a_k^n+\d)-m(\a_k^n-\d)=1
\end{equation}
if $\d>0$ is
small enough, where $m(\a)$ is the Morse index of the radial solution $u_{n,\a}$ in
the space $\cH_n$. \\
Let us consider the operator $T^n(\a,v):(0,+\infty)\times \cH_n\to \cH_n$,
defined by  $T^n(\a,v):=\left(-\D\right)^{-1}\left( |x|^{\a}|v+\c_{\a}(n)|^{p_{\a}-1}(v+\c_{\a}(n))\right)$, with $\c_{\a}(n)=U_{\a}\left(\frac 1{\e_n}\right)$. $T^n$  is a compact operator
for every fixed $\a$ and is continuous with respect to $\a$. Let us suppose by contradiction that $(\a_k^n,u_{n,\a_k^n})$ is not a
bifurcation point and set
$F^n(\a,v):=v-T^n(\a,v)$. Then there exists $\d_0>0$ such that for
$\d\in (0,\d_0)$ and every $c\in (0,\d_0)$ we have
\begin{equation}\label{2.5}
\begin{array}{ll}
&F^n(\a,v)\neq 0,\quad \forall \,\, \a\in (\a_k^n-\d,\a_k^n+\d),\\
\\
&\forall \,\, v\in \cH_n\hbox{
  such that }\nor v- u_{n,\a} \nor_{\cH_n}\leq c \,\,\hbox{ and }v\neq u_{n,\a}.
\end{array}
\end{equation}
We can also choose $\d_0$
in such a way that the interval $[\a_k^n-\d,\a_k^n+\d]$ does not contain other points $\a_h^n$ with $h\neq k$.
Let us consider the
set  $\Gamma:=\{(\a,v)\in[\a_k^n-\d,\a_k^n+\d]\times  \cH_n\,:\, \nor v- u_{n,\a}\nor_{\cH_n}
<c\}$.  Notice that
$F^n(\a,\cdot)$ is a compact perturbation of the identity and so it makes
sense to consider the Leray-Schauder topological degree $\mathit{deg}
\left(  F^n(\a,\cdot),\Gamma_{\a},0\right)$ of $F^n(\a,\cdot)$ on the set
 $\Gamma_{\a}:=\{v\in \cH_n\hbox{ such that } (\a,v)\in \Gamma\}$. From
(\ref{2.5}) it follows that there  exist no  solutions of $F^n(\a,v)=0$ on $\de
_{[\a_k^n-\d,\a_k^n+\d]\times \cH_n}\Gamma$. By the homotopy invariance of
the degree, we get
\begin{equation}\label{2.6}
\mathit{deg} \left( F^n(\a,\cdot),\Gamma_{\a},0\right)\hbox{ is constant on }[\a_k^n-\d,\a_k^n+\d].
\end{equation}
Since the linearized  operator
$T^n_v(\a,u_{n,\a})$ is invertible for  $\a=\a_k^n+\d$ and
$\a=\a_k^n-\d$, we have
$$\mathit{deg} \left( F^n(\a_k^n-\d,\cdot),\Gamma_{\a_k^n-\d},0\right)=(-1)^{m(\a_k^n-\d)}$$
and
$$\mathit{deg} \left( F^n(\a_k^n+\d,\cdot),\Gamma_{\a_k^n+\d},0\right)=(-1)^{m(\a_k^n+\d)}.$$
By the choice of  $\a_k^n$ and of the space  $\cH_n$ and by (\ref{2.4}) we get
$$\mathit{deg} \left( F^n(\a_k^n-\d,\cdot),\Gamma_{\a_k^n-\d},0\right)=-\mathit{deg} \left(
F^n(\a_k^n+\d,\cdot),\Gamma_{\a_k^n+\d},0\right)$$
contradicting (\ref{2.6}). Then $(\a_k^n,u_{n,\a_k^n})$ is a bifurcation point
and the bifurcating solutions are nonradial since $u_{n,\a} $ is radially
nondegenerate for any $\a$ as proved in Lemma \ref{lemma-radial-nondegeneracy}.
\end{proof}
\noindent Let us observe that these bifurcating solutions lie in the space $\cH_n$
and hence are $O(N-1)$-invariant.\\

\begin{theorem}\label{t-bif-2}
Let $\a_k^n$ be as defined in \eqref{c2-epsilon} and related to some $k$ even.
Then there exist $[\frac N2]$ distinct nonradial solutions of \eqref{p-epsilon} bifurcating from $(\a_k^n,u_{n,\a_k^n})$.
\end{theorem}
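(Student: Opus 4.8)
The plan is to carry out the degree-theoretic bifurcation argument of Theorem~\ref{t-bif} simultaneously in $\big[\frac N2\big]$ different symmetry classes. For $j=1,\dots,\big[\frac N2\big]$ split $\R^N=\R^{N-j}\times\R^j$, write $x=(x',x'')$ accordingly, let $G_j=O(N-j)\times O(j)$ act on $\R^N$ componentwise, and put
\[
\cH_n^j:=\bigl\{v\in C^{1,\g}_0(\overline B_n)\ :\ v(x)=v(gx)\ \text{ for all }g\in G_j\bigr\}.
\]
Since $|x|^\a$ is radial and $\c_\a(n)=U_\a(\tfrac1{\e_n})$ is a constant, the operator $T^n(\a,\cdot)$ of the proof of Theorem~\ref{t-bif} maps each $\cH_n^j$ into itself, is compact there and continuous in $\a$, and the radial solution $u_{n,\a}$ belongs to $\cH_n^j$ for every $\a$. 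Hence the whole Leray--Schauder degree computation of that proof applies verbatim inside $\cH_n^j$, with $m_j(\a)$ now denoting the Morse index of $u_{n,\a}$ in $\cH_n^j$.

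The quantitative point is to count the eigendirections of mode $\mu_k$ that survive in $\cH_n^j$. A $G_j$-invariant polynomial is a polynomial in $|x'|^2$ and $|x''|^2$, hence has even degree; writing $k=2m$ (recall that $k$ is even, so $m\ge1$), the space of $G_j$-invariant homogeneous polynomials of degree $k$ has dimension $m+1$, with basis $|x'|^{2a}|x''|^{2(m-a)}$, $0\le a\le m$. Since the Laplacian commutes with $G_j$ and carries homogeneous polynomials of degree $k$ onto those of degree $k-2$, averaging a preimage over the compact group $G_j$ shows that it carries the $G_j$-invariant space of degree $k$ onto the $G_j$-invariant space of degree $k-2$, which has dimension $m$. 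Therefore the space of $G_j$-invariant homogeneous harmonic polynomials of degree $k$ in $\R^N$ is exactly one-dimensional; call $Y_k^j$ a generator. By the spectral decomposition of Section~\ref{s4} and Remark~\ref{rem-2} it follows that the kernel of the linearization of \eqref{p-epsilon} at $(\a_k^n,u_{n,\a_k^n})$, restricted to $\cH_n^j$, is one-dimensional, spanned by $\phi(|x|)\,Y_k^j\!\left(\tfrac{x}{|x|}\right)$ for a suitable radial profile $\phi$, and that $u_{n,\a}$ is nondegenerate in $\cH_n^j$ for $\a\in[\a_k^n-\d,\a_k^n+\d]\setminus\{\a_k^n\}$, provided $\d>0$ is small enough to exclude the other (discrete) degeneracy values $\a_h^n$. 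Consequently $m_j(\a_k^n+\d)-m_j(\a_k^n-\d)=1$, the degree $(-1)^{m_j(\a)}$ changes sign across $\a_k^n$, and exactly as in Theorem~\ref{t-bif} we obtain a continuum $\mathcal C_j\subset\cH_n^j$ of solutions of \eqref{p-epsilon} bifurcating from $(\a_k^n,u_{n,\a_k^n})$; these solutions are nonradial because $u_{n,\a}$ is radially nondegenerate by Lemma~\ref{lemma-radial-nondegeneracy}.

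Finally I would show that $\mathcal C_1,\dots,\mathcal C_{[N/2]}$ are pairwise distinct. The generators $Y_k^1,\dots,Y_k^{[N/2]}$ are pairwise non-proportional: for $i\neq j$, $Y_k^j$ is not $G_i$-invariant, since otherwise it would be invariant under the group generated by $G_i$ and $G_j$, which — composing the coordinate-plane rotations coming from the two factors — contains $SO(N)$ and therefore admits no nonzero invariant harmonic polynomial of degree $k\ge1$. By a Lyapunov--Schmidt reduction at $(\a_k^n,u_{n,\a_k^n})$, admissible since the kernel in $\cH_n^j$ is one-dimensional, any $v\in\mathcal C_j$ close to the bifurcation point has $v-u_{n,\a}=t\bigl(\phi(|x|)Y_k^j(x/|x|)+o(1)\bigr)$ with $t\to0$. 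As $\phi(|x|)Y_k^j(x/|x|)\notin\cH_n^i$ and $\cH_n^i$ is a closed subspace, its distance to $\cH_n^i$ is positive, so $v-u_{n,\a}\notin\cH_n^i$ for small $t\neq0$; since $u_{n,\a}\in\cH_n^i$ this forces $v\notin\cH_n^i\supset\mathcal C_i$. Hence the $\big[\frac N2\big]$ continua are genuinely different, which proves the theorem.

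I expect the distinctness step to be the main obstacle: one has to rule out that a solution constructed with symmetry $G_j$ secretly carries the larger symmetry group generated by $G_j$ and some $G_i$, which is exactly what would let it sit on a second branch; doing so requires some control on the angular profile of the bifurcating solutions near $(\a_k^n,u_{n,\a_k^n})$ together with the elementary fact that $\langle G_i,G_j\rangle$ has no nonzero invariant harmonic polynomial of positive degree. The remaining ingredient, that the space of $G_j$-invariant homogeneous harmonic polynomials of degree $k$ is one-dimensional for every $j\le\big[\frac N2\big]$ — which is what makes each Morse-index jump odd — is routine once the polynomial count above is in place.
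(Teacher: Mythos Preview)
Your proof follows the same strategy as the paper's: work in the symmetry classes $\cH_n^j$ for the groups $G_j=O(N-j)\times O(j)$, show that the relevant kernel is one-dimensional, and rerun the degree argument of Theorem~\ref{t-bif}. The paper simply cites Smoller--Wasserman \cite{SW90} for the one-dimensionality, whereas your direct dimension count (surjectivity of $\Delta$ on the $G_j$-invariant homogeneous polynomials) is a nice self-contained substitute.

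The one place you overcomplicate matters is the distinctness step. Your own group-theoretic observation --- that a function invariant under both $G_i$ and $G_j$ with $i\neq j$ must be radial, because $\langle G_i,G_j\rangle\supset SO(N)$ --- applies directly to the \emph{solutions}, not merely to the harmonic polynomials $Y_k^j$: any nonradial $v\in\mathcal C_j\subset\cH_n^j$ is $G_j$-invariant, so if it also lay in $\cH_n^i$ it would be $G_i$-invariant and hence radial, a contradiction. This is precisely the paper's argument (again citing \cite{SW90}), and it works globally on the continua. Your Lyapunov--Schmidt detour is therefore unnecessary; it also quietly requires the Crandall--Rabinowitz transversality condition (nonzero speed of the crossing eigenvalue), which you have not checked, so the simpler route is also the safer one.
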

\begin{proof}
Let us consider the subgroups $\mathcal{G}_h$ of $O(N)$ defined by
\begin{equation}\label{g-h}
\mathcal{G}_h=O(h)\times O(N-h)\quad \text{ for }1\leq h\leq \left[\frac N2\right].
\end{equation}
In \cite{SW90} (see also  \cite{W89})  it is showed that if $k$ is even then the eigenspace $V_k$ of the
Laplace-Beltrami operator on  $S^{N-1}$, related to the
eigenvalue  $\mu _k$ invariant by the action of $\mathcal{G}_h$, has dimension one. \\
Then defining by $\cH_n^h$ the subspace of $C^{1,\g}_0(\overline B_n)$ of functions invariant by the action of $\mathcal{G}_h$ we have that as in  \eqref{2.4} 
\begin{equation}\label{2.7}
m^h(\a_k^n+\d)-m^h(\a_k^n-\d)=1
\end{equation}
if $\d>0$ is
small enough, where $m^h(\a)$ is the Morse index of the radial solution $u_{n,\a}$ in
the space $\cH_n^h$. \\
Reasoning exactly as in the proof of the previous theorem we get that $(\a_k^n,u_{n,\a_k^n})$ is a bifurcation point and the bifurcating solution is invariant by the action of $\mathcal{G}_h$.\\
Moreover, if we get a solution $v$ which is invariant with respect to the action of the two groups $\mathcal{G}_{h_1}$ and $\mathcal{G}_{h_2}$ with $h_1\neq h_2$, then $v$ must be radial (see \cite{SW90}), and this is not possible, since the radial solutions $u_{n,\a}$ are isolated.\\
Then, we derive the existence of $[\frac N2]$ distinct nonradial solutions of \eqref{p-epsilon} bifurcating from $(\a_k^n,u_{n,\a_k^n})$.
\end{proof}
Let us denote by $\Sigma_n$ the closure in $(0,+\infty)\times \cH_n$ of the set of solutions of $F^n(\a,v)=0$ different from $u_{n,\a}$, i.e
\begin{equation}\label{sigma-n}
\Sigma_n:=\overline{\{(\a,v)\in (0,+\infty)\times \cH_n \, ;\,F^n(\a,v)=0 \, ,\, v\neq u_{n,\a}\}}
\end{equation}
where $F^n(\a,v)$ and $\cH_n$ are as in the proof of Theorem \ref{t-bif} or \ref{t-bif-2}. If $(\a_k^n,u_{n,\a_k^n})\in \cSn$ is a nonradial bifurcation point, then $(\a_k^n,u_{n,\a_k^n})\in \Sigma_n$.\\
For $(\a_k^n,u_{n,\a_k^n})\in \Sigma_n$ we will call $\mathcal{C}(\a_k^n)\subset \Sigma_n$ the closed connected component of $\Sigma_n$ which contains $(\a_k^n,u_{n,\a_k^n})$ and it is maximal with respect to the inclusion.\\
We have the following:
\begin{proposition}\label{v-pos}
Let $\a_k^n$ be as defined in \eqref{c2-epsilon}.
If $(\a,v_\a)\in \mathcal{C}(\a_k^n)$ then $v_\a$ is a solution of \eqref{p-epsilon} corresponding to $\e_n$, in particular $v_\a>0$ in $B_n$.
\end{proposition}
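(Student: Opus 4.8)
\emph{Plan.} The natural strategy is a continuation argument along the branch: I will show that
\[
A:=\bigl\{(\a,v)\in\mathcal{C}(\a_k^n)\ :\ v\ge 0\ \text{ in }B_n\bigr\}
\]
is nonempty, relatively open and relatively closed in $\mathcal{C}(\a_k^n)$; since $\mathcal{C}(\a_k^n)$ is connected this forces $A=\mathcal{C}(\a_k^n)$, and then strict positivity and the fact that each $v_\a$ solves \eqref{p-epsilon} follow at once from the maximum principle.

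\emph{Preliminary observation.} Every $(\a,v_\a)\in\mathcal{C}(\a_k^n)\subset\Sigma_n$ satisfies $F^n(\a,v_\a)=0$, i.e. $v_\a=T^n(\a,v_\a)$, which means
\[
-\Delta v_\a=C(\a)|x|^{\a}\,|v_\a+\c_\a(n)|^{p_\a-1}(v_\a+\c_\a(n))\ \text{ in }B_n,\qquad v_\a=0\ \text{ on }\partial B_n,
\]
with $\c_\a(n)=U_\a(1/\e_n)>0$. Moreover $v_\a\not\equiv0$: since $(-\Delta)^{-1}$ maps positive functions to positive functions, $T^n(\a,0)>0$ in $B_n$, so $F^n(\a,0)\ne0$ and $(\a,0)\notin\Sigma_n$. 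Hence, as soon as $v_\a\ge0$ in $B_n$ is known, $v_\a+\c_\a(n)\ge\c_\a(n)>0$, so $|v_\a+\c_\a(n)|^{p_\a-1}(v_\a+\c_\a(n))=(v_\a+\c_\a(n))^{p_\a}$ and $v_\a$ solves \eqref{p-epsilon}; furthermore $-\Delta v_\a\ge0$ with $v_\a$ a nontrivial nonnegative (classical, by elliptic regularity) solution, so the strong maximum principle gives $v_\a>0$ in $B_n$ and Hopf's lemma gives $\partial v_\a/\partial\nu<0$ on $\partial B_n$.

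\emph{The three properties of $A$.} First, $A\ne\emptyset$, because the bifurcation point $(\a_k^n,u_{n,\a_k^n})$ lies in $A$: by \eqref{w-epsilon}, $u_{n,\a}=U_\a-U_\a(1/\e_n)\ge0$ on $B_n$, since $U_\a$ is radially decreasing. Second, $A$ is relatively closed, since $\cH_n\hookrightarrow C^{1,\g}_0(\overline B_n)$ and a uniform limit of nonnegative functions is nonnegative. Third, $A$ is relatively open: if $(\a,v)\in A$ then, by the preliminary observation, $v>0$ in $B_n$ and $\partial v/\partial\nu<0$ on $\partial B_n$, and the set of functions of $C^{1,\g}_0(\overline B_n)$ with these two properties is open (a $C^1$-small perturbation vanishing on $\partial B_n$ keeps the strict sign of the normal derivative, hence stays strictly positive on a fixed inner tubular neighbourhood of $\partial B_n$, and stays positive on the complementary compact set by uniform positivity there); therefore every point of $\mathcal{C}(\a_k^n)$ close enough to $(\a,v)$ belongs to $A$.

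\emph{Conclusion and main obstacle.} By connectedness of $\mathcal{C}(\a_k^n)$ and the fact that $A$ is a nonempty relatively clopen subset, $A=\mathcal{C}(\a_k^n)$. Thus $v_\a\ge0$ for every $(\a,v_\a)\in\mathcal{C}(\a_k^n)$, hence $v_\a$ solves \eqref{p-epsilon}, and $v_\a>0$ in $B_n$ by the maximum principle step. The only point requiring some care is the openness step, namely that the ``positive cone with strictly negative normal derivative'' has nonempty interior in $C^{1,\g}_0(\overline B_n)$; this is classical, and everything else reduces to the strong maximum principle and Hopf's lemma.
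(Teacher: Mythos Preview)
Your proof is correct and follows the same overall strategy as the paper: show that the set of nonnegative solutions along the branch is nonempty and relatively clopen, then invoke connectedness of $\mathcal{C}(\a_k^n)$ and the strong maximum principle. The only difference is in the openness step. The paper exploits the additive constant $\gamma_{\bar\a}(n)=U_{\bar\a}(1/\e_n)>0$ directly: if $\|v_{\bar\a}-v_\a\|_\infty<\gamma_{\bar\a}(n)$ and $v_\a\ge0$, then $v_{\bar\a}+\gamma_{\bar\a}(n)=v_\a+(v_{\bar\a}-v_\a+\gamma_{\bar\a}(n))\ge0$, hence $-\Delta v_{\bar\a}\ge0$ and the maximum principle gives $v_{\bar\a}>0$. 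Your argument instead uses Hopf's lemma and the classical fact that the positive cone in $C^{1,\g}_0(\overline{B_n})$ has nonempty interior (strictly positive in the interior plus strictly negative outer normal derivative). Both are standard; the paper's route is slightly more self-contained, while yours is more structural and would work equally well without the specific shift $\gamma_\a(n)$.
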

\begin{proof}
Let us consider the subset $\mathcal{C}\subset \mathcal{C}(\a_k^n)$ of points $(\a,v_\a)$ which are non-negative solutions of $ F^n(\a,v)=0$. Obviously $(\a_k^n,u_{n,\a_k^n})\in \mathcal{C}$. We will prove that $\mathcal{C}$ is closed and open in $\mathcal{C}(\a_k^n)$, hence $\mathcal{C}=\mathcal{C}(\a_k^n)$ since $\mathcal{C}(\a_k^n)$ is connected. Moreover, the maximum principle implies that if $(\a,v_{\a})\in \mathcal{C}$ then either $v_{\a}>0$ or $v_{\a}\equiv 0$, but, since zero is not a solution of \eqref{p-epsilon} then the solutions on $\mathcal{C}$ are positive solutions.\\
If $(\a,v_\a)$ is a point in the closure of $\mathcal{C}$ then there is a sequence of points $(\a_j,v_j)$ in $\mathcal{C}$ that converges to $(\a,v_\a)$ in $(0,+\infty)\times C^{1,\g}_0(\overline B_n)$.
As $j\to +\infty$ we get that $v_\a$ is a solution of $F^n(\a,v_\a)=0$ and $v_\a\geq 0$ in $B_n$. By the Maximum principle either $v_\a>0$ or $v_\a\equiv 0$ in $B_n$. But the second case is not possible since zero is not a solution of \eqref{p-epsilon}. Then $v_\a>0$ in $B_n$, $(\a,v_\a)\in\mathcal{C}$ and $\mathcal{C}$ is closed.

Now we will show that $\mathcal{C}$ is open in $ \mathcal{C}(\a_k^n)$. Let $ (\a,v_\a)$ be a point in $\mathcal{C}$ and
$(\bar{\a},v_{\bar{\a}})$ in $ \mathcal{C}(\a_k^n)$ such that $\nor v_{\a}-v_{\bar \a}\nor_{\mathcal{H}_n}< \c_{\bar{\a}}(n)$, then
\begin{align*}
 - \Delta v_{\bar{\a}} &= |x|^{\bar{\a}}|v_{\bar{\a}}+\c_{\bar{\a}}(n)|^{p_{\bar{\a}}-1}(v_{\bar{\a}} +\c_{\bar{\a}}(n)) \\
 &= |x|^{\bar{\a}}|v_{\bar{\a}}+\c_{\bar{\a}}(n)|^{p_{\bar{\a}}-1}(v_\a + v_{\bar{\a}} - v_\a +\c_{\bar{\a}}(n)) \geq 0
\end{align*}
in $B_n$ and, since $v_{\bar{\a}} = 0$ on $\partial B_n$, it follows by the maximum principle that $v_{\bar{\a}} >0$ in $B_n$.

\end{proof}
\begin{theorem}\label{t-alternative}
Let $\a_k^n$ be as defined in \eqref{c2-epsilon} and let $\mathcal{C}(\a_k^n)$ be the closed connected component of $\Sigma_n$ which contains $(\a_k^n,u_{n,\a_k^n})$ and it is maximal with respect to the inclusion.
Then either
\begin{itemize}
\item[i)] $\mathcal{C}(\a_k^n)$ is unbounded in $[0,+\infty)\times \cH_n$, or
\item[ii)] there exists $\a_h^n$ with $h\neq k$ such that $(\a_h^n,u_{n,\a_h^n})\in\mathcal{C}(\a_k^n)$, or
\item[iii)] $\mathcal{C}(\a_k^n)$ meets $\{0\}\times \cH_n$.
\end{itemize}
\end{theorem}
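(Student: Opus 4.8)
The plan is to apply the global bifurcation theorem of Rabinowitz to the compact perturbation of the identity $F^n(\a,v) = v - T^n(\a,v)$ on $(0,+\infty)\times\cH_n$, where $\cH_n$ and $T^n$ are exactly as in the proof of Theorem \ref{t-bif}. The three alternatives in the statement are precisely the Rabinowitz alternatives, phrased for this particular setting: a connected component of nontrivial solutions emanating from a simple-eigenvalue bifurcation point is either unbounded, or returns to the trivial branch $\{(\a,u_{n,\a})\}$ at another bifurcation point, or escapes to the boundary of the parameter interval (here $\a\to 0^+$, i.e.\ it meets $\{0\}\times\cH_n$).

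First I would reduce to the classical statement by a change of variable that turns the curve of radial solutions $\{(\a,u_{n,\a})\}$ into the trivial branch. Setting $w = v - u_{n,\a}$, the equation $F^n(\a,v)=0$ becomes $G^n(\a,w):=w - \left(-\D\right)^{-1}\!\big(|x|^\a|w+u_{n,\a}+\c_\a(n)|^{p_\a-1}(w+u_{n,\a}+\c_\a(n)) - |x|^\a|u_{n,\a}+\c_\a(n)|^{p_\a-1}(u_{n,\a}+\c_\a(n))\big)=0$, which has $w\equiv 0$ as a solution for every $\a$; here one uses that $u_{n,\a}$ itself solves \eqref{p-epsilon}, and the continuity of $\a\mapsto u_{n,\a}$ and $\a\mapsto\c_\a(n)$ in $C^{1,\g}_0(\overline B_n)$ so that $G^n$ is continuous and a compact perturbation of the identity. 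Next I would record that $(\a_k^n,0)$ is a bifurcation point at which the relevant eigenvalue crossing is \emph{simple in $\cH_n$}: this is exactly what \eqref{2.4} (or \eqref{2.7}) established via the one-dimensionality of the $O(N-1)$-invariant (resp.\ $\mathcal G_h$-invariant) part of the eigenspace $V_k$, which forces the Leray--Schauder degree to change sign across $\a_k^n$. By the Krasnoselskii--Rabinowitz global bifurcation theorem, the maximal closed connected component $\mathcal{C}(\a_k^n)$ of the closure of the set $\{(\a,w):G^n(\a,w)=0,\ w\neq0\}$ containing $(\a_k^n,0)$ either is unbounded in $[0,+\infty)\times\cH_n$, or contains another trivial-branch point $(\a_h^n,0)$ with $h\neq k$, or meets the boundary $\{0\}\times\cH_n$ of the parameter half-line. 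Translating back via $v=w+u_{n,\a}$ gives exactly alternatives i)--iii), and by Proposition \ref{v-pos} every element of $\mathcal{C}(\a_k^n)$ is a genuine (positive) nonradial solution of \eqref{p-epsilon}, since it is nontrivial and $u_{n,\a}$ is radially nondegenerate by Lemma \ref{lemma-radial-nondegeneracy}.

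The main obstacle is to make sure the hypotheses of the global bifurcation theorem are genuinely met in the Banach space $\cH_n$: namely that $T^n(\a,\cdot)$ (equivalently the translated operator above) is compact on $\cH_n$, jointly continuous in $(\a,v)$, Fr\'echet differentiable at $u_{n,\a}$ with derivative having the required Leray--Schauder index jump of one at $\a=\a_k^n$, and that the bifurcation is \emph{odd} (algebraic multiplicity one) so that Rabinowitz's theorem — and not merely Krasnoselskii's local result — applies. The compactness is standard elliptic regularity for $(-\D)^{-1}$ on the bounded domain $B_n$ together with the $O(N-1)$- (resp.\ $\mathcal G_h$-) invariance, which is preserved by $(-\D)^{-1}$; the degree computation was already carried out in Theorem \ref{t-bif}. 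A secondary point worth a sentence is why alternative iii) is stated as meeting $\{0\}\times\cH_n$ rather than $\a\to+\infty$: the parameter space is the open interval $(0,+\infty)$, and since the $\a_k^n$ are isolated and by \eqref{c2} bounded, unboundedness toward $+\infty$ in $\a$ is subsumed in alternative i); only the finite boundary $\a=0$ gives an independent escape possibility. I would also remark that, because we work in the restricted symmetry class $\cH_n$ where the crossing is simple, no extra work is needed to guarantee the component is one-dimensional near the bifurcation point — the abstract theorem delivers the global continuum directly.
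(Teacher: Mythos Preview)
Your proposal is correct and follows exactly the approach indicated in the paper, which simply invokes Rabinowitz's global bifurcation theorem \cite{R71} (with references to \cite{AM07}, \cite{G10} for details) without spelling out the reduction to the trivial branch or the verification of hypotheses. You have essentially written out the details the paper leaves to those references; the one minor over-statement is that Rabinowitz's global theorem requires only an \emph{odd} crossing of the index (which is precisely what \eqref{2.4} gives), not algebraic simplicity per se, so your remark about simplicity guaranteeing a one-dimensional local branch is unnecessary here.
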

\begin{proof}
The proof follows from the global bifurcation result of Rabinowitz, \cite{R71}. One can see also \cite{AM07} or \cite{G10} for details.
\end{proof}
\begin{remark}\label{rem-3.10}
\rm The results of Proposition \ref{v-pos} and  Theorem \ref{t-alternative} hold for every bifurcation point  generated by an odd change in the Morse index of the radial solution $u_{n,\a}$. Then, using Theorem \ref{t-bif-2}, when $k$ is even we can find $\big[\frac N2\big]$ different continua of (positive) nonradial solutions bifurcating from $(\a_k^n,u_{n,\a_k^n})$. Moreover these continua are global in the sense that they satisfy Theorem \ref{t-alternative}.
\end{remark}
\sezione{Some estimates on the approximating solutions}\label{s6}
In this section we give some estimates on the decay of solutions (not necessary radial) of \eqref{p-epsilon} as $\e\to 0$. As before we consider the functions defined in $\R^N$ extended by zero outside of $B_{\frac 1{\e}}(0)$, and we denote by $\nor\cdot\nor_{\b}$ and $\nor\cdot\nor_{1,2}$ the norm of $L^{\infty}_{\b}(\R^N)$ and $D^{1,2}(\R^N)$ respectively. Let $X$ be as defined in \eqref{X}. Then we have the following
\begin{proposition}\label{prima-stima}
Let $\e_n$ and $\a_n$ be sequences such that $\e_n\to 0$ and $\a_n\to \bar \a>0$ as $n\to +\infty$. Let $v_{n}$ be a sequence of solutions of \eqref{p-epsilon} in $B_n:=B_{\frac 1{\e_n}}(0)$, corresponding to the exponent $\a=\a_n$, i.e.
\begin{equation}\label{generale}
\left\{
\begin{array}{ll}
-\Delta v_{n}=C(\a_n)|x|^{\a_n}\left(v_{n}+\c(n)\right)^{p_{\a_n}}&\text{ in } B_n, \\
v_{n}>0 &\text{ in } B_n,\\
v_{n}=0,&\text{ on } \partial B_n ,
\end{array}
\right.
\end{equation}
where $\c(n):=U_{\a_n}\left(\frac 1 {\e_n}\right)=\frac{\e_n^{N-2}}{\left( 1+\e_n^{2+\a_n}\right)^{\frac{N-2}{2+\a_n}}}$.
Assume that $\nor v_{n}\nor_X\leq A$ for some positive constant $A$ and for every $n$. Then there exists $C>0$ such that
$$v_{n}(x)\leq \frac C{\left(1+|x|\right)^{N-2}} \text{ for every $x\in \R^N$ and for every $n\in \N$}.$$
\end{proposition}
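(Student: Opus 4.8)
The plan is to deduce the bound directly from the Green representation formula on $B_n$, feeding in only the a priori control $\nor v_n\nor_X\le A$ and the smallness of the constant $\c(n)$. Set $f_n(y):=C(\a_n)|y|^{\a_n}\big(v_n(y)+\c(n)\big)^{p_{\a_n}}$, so that $-\Delta v_n=f_n$ in $B_n$, $v_n=0$ on $\partial B_n$, and $f_n\ge0$ (since $v_n+\c(n)\ge\c(n)>0$ in $B_n$). Since $f_n$ is locally bounded in $B_n$ (because $v_n\in L^\infty$), a standard elliptic bootstrap makes $v_n$ a classical solution inside $B_n$, so the representation
\[
v_n(x)=\int_{B_n}G_n(x,y)\,f_n(y)\,dy ,\qquad x\in B_n ,
\]
holds, $G_n$ being the Dirichlet Green function of $-\Delta$ in $B_n$. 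Writing $G_n(x,y)=c_N|x-y|^{2-N}-h_n(x,y)$ with $h_n(\cdot,y)$ harmonic in $B_n$ and equal to $c_N|x-y|^{2-N}\ge0$ on $\partial B_n$, the maximum principle gives $0\le G_n(x,y)\le c_N|x-y|^{2-N}$, hence
\[
0\le v_n(x)\le c_N\int_{B_n}\frac{f_n(y)}{|x-y|^{N-2}}\,dy\qquad\text{for every }x\in\R^N
\]
(trivially for $|x|\ge1/\e_n$, where $v_n\equiv0$). Everything thus reduces to a pointwise bound for this Newtonian potential, uniform in $n$, and for that it is enough to control the decay of $f_n$.

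To estimate $f_n$ I would first absorb $\c(n)$ into the profile $(1+|y|)^{-\b}$. By \eqref{w-epsilon} one has $\c(n)=U_{\a_n}(1/\e_n)\le\e_n^{N-2}$, while on $B_n$, where $|y|\le1/\e_n$, we have $(1+|y|)^{-\b}\ge(2/\e_n)^{-\b}=2^{-\b}\e_n^{\b}$; since $\b<N-2$ this gives $\c(n)\le2^{\b}\e_n^{N-2-\b}(1+|y|)^{-\b}\le(1+|y|)^{-\b}$ on $B_n$ for $n$ large. Together with $0<v_n(y)\le\nor v_n\nor_{\b}(1+|y|)^{-\b}\le A(1+|y|)^{-\b}$ this yields $v_n(y)+\c(n)\le(A+1)(1+|y|)^{-\b}$ in $B_n$, hence
\[
f_n(y)\le C(\a_n)(A+1)^{p_{\a_n}}\frac{|y|^{\a_n}}{(1+|y|)^{\b\,p_{\a_n}}}\qquad\text{in }B_n .
\]
For $|y|\ge1$ the right-hand side is at most $C_0|y|^{-\tau_n}$ with $\tau_n:=\b\,p_{\a_n}-\a_n$ and $C_0$ independent of $n$ (because $\a_n\to\bar\a$ stays in a compact subset of $(0,+\infty)$). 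The crucial arithmetic is that $\b\mapsto\b\,p_{\a}-\a$ is strictly increasing and takes the value $N+\frac{(N-2)\a}{N+2}$ at $\b=\frac{N}{N+2}(N-2)$; hence the hypothesis $\b>\frac{N}{N+2}(N-2)$ forces
\[
\tau_n=\b\,p_{\a_n}-\a_n>N+\frac{(N-2)\a_n}{N+2}\ge N+\d
\]
for all $n$ large, where $\d>0$ depends only on $N$ and $\bar\a$. So $f_n(y)\le C_0|y|^{-(N+\d)}$ for $|y|\ge1$ and $f_n(y)\le C_0$ for $|y|\le1$, all constants uniform in $n$.

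Finally I would invoke the standard Newtonian‑potential estimate: if $g\ge0$ is bounded on $\{|y|\le1\}$ and $g(y)\le K|y|^{-\tau}$ for $|y|\ge1$ with $\tau>N$, then $\int_{\R^N}|x-y|^{2-N}g(y)\,dy\le C\,(1+|x|)^{2-N}$, with $C$ depending only on $N,\tau,K$ and $\sup_{|y|\le1}g$; this follows by splitting the integral over $\{|y|\le|x|/2\}$, $\{|x-y|\le|x|/2\}$ and the complement, the hypothesis $\tau>N$ being precisely what makes the first piece $O(|x|^{2-N})$. Applied to $f_n$ (extended by $0$ outside $B_n$) with $\tau=N+\d$ and uniform constants, this gives $v_n(x)\le C(1+|x|)^{2-N}$ with $C$ independent of $n$, for all $n$ large; the finitely many remaining indices cause no trouble, since each such $v_n$ is bounded and compactly supported, so $(1+|x|)^{N-2}v_n(x)$ is bounded and one enlarges $C$. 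The one point I expect to require care is the bookkeeping of exponents in the second step: it is exactly the lower bound $\b>\frac{N}{N+2}(N-2)$ built into the space $X$, combined with $\c(n)=O(\e_n^{N-2})$ and $\b<N-2$, that makes $f_n$ decay strictly faster than $|y|^{-N}$, so that one application of the potential estimate already produces the optimal decay rate $|x|^{2-N}$ and no iteration is needed. (Alternatively one could use a radial supersolution comparable to $(1+|x|)^{2-N}$, but the representation formula keeps the $n$‑dependence transparent.)
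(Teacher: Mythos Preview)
Your argument is correct and follows the same strategy as the paper: represent $v_n$ by the Green formula, bound the source $|y|^{\a_n}(v_n+\c(n))^{p_{\a_n}}$ by $C(1+|y|)^{-\b p_{\a_n}}|y|^{\a_n}$ using $\nor v_n\nor_\b\le A$ and $\c(n)\lesssim(1+|y|)^{-\b}$ on $B_n$, and then use that $\b>\frac{N}{N+2}(N-2)$ forces this source to decay strictly faster than $|y|^{-N}$, so one pass through a potential estimate already yields the optimal $|x|^{2-N}$ decay.

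The only difference is in the last step. You bound $G_n$ by the Newtonian kernel and then estimate $\int_{\R^N}|x-y|^{2-N}f_n(y)\,dy$ by the three-region splitting. The paper instead keeps the exact Dirichlet Green function and observes that the majorizing source is \emph{radial}: the function $\psi_n(x)=\int_{B_n}G_n(x,y)\,|y|^{\a_n}(1+|y|)^{-\b p_{\a_n}}\,dy$ solves a radial Dirichlet problem, so $\psi_n=\psi_n(r)$ and one simply integrates the ODE to get $-r^{N-1}\psi_n'(r)=\int_0^r s^{\a_n+N-1}(1+s)^{-\b p_{\a_n}}\,ds\le C$, hence $\psi_n(r)\le Cr^{2-N}$. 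This radial shortcut avoids the splitting argument entirely and makes the $n$-uniformity immediate; your approach, on the other hand, would work verbatim even if the comparison source were not radial.
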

\begin{proof}
We first note that
\begin{equation}\label{6.2}
\c(n) \leq \frac{C}{(1+|x|)^{N-2}} \qquad \forall \,\,\, x \in B_{n}
\end{equation}
and, since $\nor v_{n}\nor_{\b}\le\nor v_{n}\nor_X\leq A$ with $\b < N-2$, we have
\begin{equation}\label{6.3}
|v_{n}(x)+\c(n)| \leq \frac{C}{(1+|x|)^\b} \qquad \forall \,\,\, x \in B_{n}.
\end{equation}
We shall use the integral representation of $v_n$ to obtain the desired estimate. If $G_n(x,y)$ denotes the Green function of $B_n$ then
\begin{align}
v_n(x) & = \int_{B_n}G_n(x,y)|y|^{\a_n}(v_n(y)+\c(n))^{p_{\a_n}}dy \notag \\
       & \leq C \int_{B_n}G_n(x,y)\frac{|y|^{\a_n}}{(1+|y|)^{\b p_{\a_n}}}dy .\label{6.4}
\end{align}
Now we consider the function
$$ \psi_n(x) = \int_{B_n}G_n(x,y)\frac{|y|^{\a_n}}{(1+|y|)^{\b p_{\a_n}}}dy$$
which verifies
\begin{equation*}
\begin{cases}
-\Delta \psi_n = \frac{|x|^{\a_n}}{(1+|x|)^{\b p_{\a_n}}}, & \text{in} \,\,\, B_n \\
\psi_n = 0 , & \text{on} \,\,\, \partial B_n.
\end{cases}
\end{equation*}
So $\psi_n(r) = \psi_n(|x|)$ satisfies
\begin{equation}\label{6.5}
 -r^{N-1}\psi_n'(r) = \int_{0}^{r} \frac{s^{\a_n + N-1}}{(1+s)^{\b p_{\a_n}}} ds.
\end{equation}
Since $\b > \frac{N}{N+2}(N-2) > \frac{N+\a_n}{p_{\a_n}}$ for all $n$, we get that
$$ \frac{s^{\a_n + N-1}}{(1+s)^{\b p_{\a_n}}} \in L^1(\R_+)$$
and consequently, by \eqref{6.5}, we have $-\psi_n'(r) \leq C r^{-(N-1)}$. Therefore
\begin{equation} \label{6.6}
\psi_n(r)
 \leq \frac{C}{r^{N-2}}
\end{equation}
and, since  $\psi_n$ is bounded then $\psi_n(r) \leq \frac{C}{(1+r)^{N-2}}$.
The claim follows by \eqref{6.4} and \eqref{6.6}.
\end{proof}
\begin{proposition}\label{lambda=1}
Let $\e_n$ and $\a_n$ be sequences such that $\e_n\to 0$ and $\a_n\to \bar \a>0$ as $n\to +\infty$. Let $v_{n}$ be a sequence of solutions of \eqref{generale}
in $B_n$ related to the exponents $\a_n$. 
If $v_{n}\to U_{\l,\bar \a}$ in $X$ then we have that $\l=1$.
\end{proposition}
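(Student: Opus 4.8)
\emph{Setup and reduction.} The plan is to produce two independent asymptotic evaluations of the boundary integral $\int_{\de B_n}|\de_\nu w_n|^2$, where we set $w_n:=v_n+\c(n)$ (so that $-\D w_n=C(\a_n)|x|^{\a_n}w_n^{p_{\a_n}}$ in $B_n$ and $w_n\equiv\c(n)$ on $\de B_n$), $B_n:=B_{1/\e_n}$, and then match them. Write $\omega_{N-1}:=|S^{N-1}|$ and $\mathcal M(\l):=\int_{\R^N}C(\bar\a)|x|^{\bar\a}U_{\l,\bar\a}^{p_{\bar\a}}\,dx$. Since $v_n$ converges in $X$ it is bounded in $X$, so Proposition \ref{prima-stima} gives $v_n(x)\le C(1+|x|)^{-(N-2)}$; this, $\c(n)\to0$ and dominated convergence yield $I_n:=\int_{B_n}C(\a_n)|x|^{\a_n}w_n^{p_{\a_n}}\,dx\longrightarrow\mathcal M(\l)$. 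Moreover $U_{\l,\bar\a}$ is the Newtonian potential of its own right hand side, $U_{\l,\bar\a}(x)=\tfrac1{(N-2)\omega_{N-1}}\int_{\R^N}|x-y|^{2-N}C(\bar\a)|y|^{\bar\a}U_{\l,\bar\a}(y)^{p_{\bar\a}}\,dy$, and $|x|^{N-2}U_{\l,\bar\a}(x)\to\l^{-(N-2)/2}$ as $|x|\to\infty$; comparing these two facts gives $\mathcal M(\l)=\l^{-(N-2)/2}(N-2)\omega_{N-1}$. Hence it suffices to prove $\mathcal M(\l)=(N-2)\omega_{N-1}$.

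\emph{First evaluation (Pohozaev).} Because $p_\a=\frac{N+2+2\a}{N-2}$ is the critical exponent for the weight $|x|^\a$, the usual Pohozaev computation for $-\D w=C(\a)|x|^\a w^{p_\a}$ on $B_R$ (testing against $x\cdot\na w$, combined with the energy identity obtained by testing against $w$) makes all the volume terms cancel and leaves the pure boundary identity
\begin{equation*}
\tfrac12\int_{\de B_R}(x\cdot\nu)|\na w|^2-\int_{\de B_R}\de_\nu w\,(x\cdot\na w)=\tfrac{C(\a)}{p_\a+1}\int_{\de B_R}(x\cdot\nu)\,|x|^\a w^{p_\a+1}+\tfrac{N-2}{2}\int_{\de B_R}w\,\de_\nu w.
\end{equation*}
I would apply this with $w=w_n$, $R=1/\e_n$, $\a=\a_n$, using that $w_n\equiv\c(n)$ on $\de B_n$ (so $\na w_n=(\de_\nu w_n)\nu$ there) and $\int_{\de B_n}\de_\nu w_n=-I_n$, and recalling $\c(n)=\e_n^{N-2}(1+o(1))$, $p_{\a_n}+1=\tfrac{2(N+\a_n)}{N-2}$. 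A short computation (the term $\propto |x|^{\a_n}\c(n)^{p_{\a_n}+1}$ being of strictly lower order) then gives
\begin{equation*}
\e_n^{-(N-1)}\int_{\de B_n}|\de_\nu w_n|^2=(N-2)\,I_n\,(1+o(1))\longrightarrow(N-2)\,\mathcal M(\l).
\end{equation*}

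\emph{Second evaluation (blow-down).} Set $\tilde v_n(z):=(1/\e_n)^{N-2}v_n(z/\e_n)$ for $z\in B_1$; then $\tilde v_n=0$ on $\de B_1$ and $-\D\tilde v_n=g_n\ge0$ in $B_1$ with $g_n(z)=(1/\e_n)^{N+\a_n}C(\a_n)|z|^{\a_n}w_n(z/\e_n)^{p_{\a_n}}$. Using $v_n(x)\le C(1+|x|)^{-(N-2)}$ one checks that $g_n\to0$ uniformly on $\{\rho\le|z|\le1\}$ for each $\rho>0$, while $\int_{B_1}g_n=I_n\to\mathcal M(\l)$, so $g_n\rightharpoonup\mathcal M(\l)\,\d_0$. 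Writing $\tilde v_n(z)=\int_{B_n}G_{B_1}(z,\e_n y)f_n(y)\,dy$ (scaling of the Green function of the ball, $f_n:=C(\a_n)|x|^{\a_n}w_n^{p_{\a_n}}$) gives $\tilde v_n(z)\to\mathcal M(\l)\,G_{B_1}(z,0)$ for $z\in B_1\setminus\{0\}$; since $\tilde v_n$ is bounded on $\{|z|\ge\tfrac12\}$ and $-\D\tilde v_n\to0$ there, boundary elliptic estimates upgrade this to convergence in $C^1(\{\tfrac12\le|z|\le1\})$. As $G_{B_1}(z,0)=\tfrac{|z|^{2-N}-1}{(N-2)\omega_{N-1}}$ has $\de_\nu G_{B_1}(\cdot,0)\equiv-\tfrac1{\omega_{N-1}}$ on $\de B_1$, we get $(1/\e_n)^{N-1}\de_\nu v_n(x)\to-\mathcal M(\l)/\omega_{N-1}$ uniformly on $\de B_n$, hence, since $\de_\nu w_n=\de_\nu v_n$ there, squaring and integrating over $\de B_n$,
\begin{equation*}
\e_n^{-(N-1)}\int_{\de B_n}|\de_\nu w_n|^2\longrightarrow\frac{\mathcal M(\l)^2}{\omega_{N-1}}.
\end{equation*}

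\emph{Conclusion and main difficulty.} Comparing the two limits, $(N-2)\mathcal M(\l)=\mathcal M(\l)^2/\omega_{N-1}$, and since $\mathcal M(\l)>0$ this forces $\mathcal M(\l)=(N-2)\omega_{N-1}$, i.e. $\l=1$. The routine ingredients are the weighted Pohozaev identity (standard, with the cancellation coming from criticality of $p_\a$) and the elementary scaling $\mathcal M(\l)=\l^{-(N-2)/2}\mathcal M(1)$. The delicate step — and the whole reason the statement is not soft — is the blow-down: the $X$-convergence does not detect the boundary behaviour of $v_n$ at the relevant scale $\e_n^{N-2}$, so one must genuinely use the PDE, namely the Green representation of $v_n$ in $B_n$, the a priori decay of Proposition \ref{prima-stima} to make the mass of $-\D v_n$ concentrate at the origin, and elliptic regularity up to $\de B_1$ to conclude that $\de_\nu w_n$ is asymptotically \emph{constant} on $\de B_n$; this is where the careful argument is required.
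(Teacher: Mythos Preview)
Your proof is correct and follows essentially the same strategy as the paper: both derive one asymptotic for $\e_n^{-(N-1)}\int_{\de B_n}|\de_\nu v_n|^2$ from the Pohozaev identity (your first evaluation is exactly the paper's \eqref{6.7}--\eqref{6.9}) and a second one from the fact that $\de_\nu v_n$ is asymptotically constant on $\de B_n$, then match them and use the explicit scaling of the mass $\mathcal M(\l)$ to force $\l=1$.

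The only technical difference is in how the constancy of $\de_\nu v_n$ on $\de B_n$ is obtained. The paper writes $\de_\nu v_n(x)$ directly as an integral against the explicit Poisson kernel $\de_{\nu_x}G_n(x,y)=-\tfrac{1}{N\omega_N}\tfrac{1-\e_n^2|y|^2}{\e_n|x-y|^N}$ and passes to the limit in that integral (splitting off the thin annulus $\tfrac{1}{2\e_n}\le|y|\le\tfrac{1}{\e_n}$), arriving at \eqref{6.13}. You instead rescale to $\tilde v_n$ on $B_1$, observe that the source concentrates to $\mathcal M(\l)\d_0$, and invoke boundary elliptic regularity on the annulus $\{\tfrac12\le|z|\le1\}$ to get $C^1$-convergence of $\tilde v_n$ to $\mathcal M(\l)G_{B_1}(\cdot,0)$. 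Both routes yield the same limit $(1/\e_n)^{N-1}\de_\nu v_n\to-\mathcal M(\l)/\omega_{N-1}$ uniformly on $\de B_n$; your packaging via blow-down and Schauder is slightly more conceptual, while the paper's direct Poisson-kernel computation is more explicit. Substantively they are the same argument.
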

\begin{proof}
By the Pohozaev identity we get
\begin{equation}\label{6.7}
\begin{split}
&-\frac{N\o_N\c(n)^{p_{\a_n}+1}}{(p_{\a_n}+1) \e_n^{\a_n+N}}  + \frac{N-2}{2}\c(n)C(\a_n)\int_{B_{n}}|x|^{\a_n}(v_n(x)+\c(n))^{p_{\a_n}}dx
\\&= \frac{1}{2\e_n}\int_{\partial B_n}\left( \frac{\partial v_n}{\partial \nu}\right)^2 d\sigma_x.
\end{split}
\end{equation}
Since $v_{n}\to U_{\l,\bar \a}$ in $X$, by the standard regularity theory, it follows that
\begin{equation} \label{6.8}
v_{n}\to U_{\l,\bar \a}  \quad \text{in} \,\,\, C_{loc}^2(\R^N) .
\end{equation}
Recalling that $\c(n) = \e_n^{N-2} (1+o_n(1)) $ we have
\begin{equation*}
\frac{\c(n)^{p_{\a_n}+1}}{\e_n^{\a_n+N}} = O(\e_n^{(N-2)(p_{\a_n}+1)-(\a_n+N)}) = O(\e_n^{N+\a_n})
\end{equation*}
and by Proposition \ref{prima-stima} and \eqref{6.8} we derive
\begin{equation*}
\int_{B_{n}}|x|^{\a_n}(v_n(x)+\c(n))^{p_{\a_n}}dx = \int_{\R^N}|x|^{\bar{\a}}U_{\l,\bar \a}^{p_{\bar{\a}}}(x)dx + o_n(1)
\end{equation*}
Thus \eqref{6.7} becomes
\begin{align}
&\e_n^{N-1}\left( (N-2) C(\bar{\a}) \int_{\R^N}|x|^{\bar{\a}}U_{\l,\bar \a}^{p_{\bar{\a}}}(x)dx + o_n(1)\right)
= \int_{\partial B_n}\left( \frac{\partial v_n}{\partial \nu}\right)^2 d\sigma_x. \label{6.9}
\end{align}
Next we expand the right-hand side of \eqref{6.9} using the Green function $G_n$ of $B_n$. For $x \in \partial B_n$, we write
\begin{equation}\label{6.10}
\frac{\partial v_n}{\partial \nu}(x) = C(\a_n)\int_{B_n}\frac{\partial G_n}{\partial \nu_x}(x,y)|y|^{\a_n}(v_n(y)+\c(n))^{p_{\a_n}}dy
\end{equation}
and substituting (here $\o_N$ denotes the volume of the
$N-$dimensional unit ball)
$$ \frac{\partial G_n}{\partial \nu_x}(x,y) = -\frac{1}{N\o_N}\frac{1-\e_n^2|y|^2}{\e_n|x-y|^N}$$
into \eqref{6.10} gives
\begin{align}
\frac{\partial v_n}{\partial \nu}(x) &= -(C(\bar{\a}) + o_n(1))\int_{B_n}\frac{1}{N\o_N}\frac{1-\e_n^2|y|^2}{\e_n|x-y|^N}|y|^{\a_n}(v_n(y)+\c(n))^{p_{\a_n}}dy
\notag \\
& = -\frac{\e_n^{N-1}(C(\bar{\a}) + o_n(1))}{N\o_N}\int_{B_n}\frac{(1-\e_n^2|y|^2)|y|^{\a_n}}{|z-\e_n y|^N}(v_n(y)+\c(n))^{p_{\a_n}}dy \label{6.11}
\end{align}
where we have posed $x=\frac{z}{\e_n}, \,\,\, z \in S^{N-1}$. Let us assume that we have proved
\begin{equation}\label{6.12}
\int_{B_n}\frac{(1-\e_n^2|y|^2)|y|^{\a_n}}{|z-\e_n y|^N}(v_n(y)+\c(n))^{p_{\a_n}}dy = \int_{\R^N}|y|^{\bar{\a}}U_{\l,\bar \a}^{p_{\bar{\a}}}(y)dy + o_n(1)
\end{equation}
uniformly with respect to $z \in S^{N-1}$, then \eqref{6.11} becomes
\begin{equation}\label{6.13}
\frac{\partial v_n}{\partial \nu}(x) = -\frac{\e_n^{N-1}C(\bar{\a})}{N\o_N}\left(\int_{\R^N}|y|^{\bar{\a}}U_{\l,\bar \a}^{p_{\bar{\a}}}(y)dy + o_n(1) \right)
\end{equation}
where the term $o_n(1)$ is uniform in $x \in \partial B_n$. Thus
$$ \left(\frac{\partial v_n}{\partial \nu}(x)\right)^2 = \frac{\e_n^{2(N-1)}\left(C(\bar{\a})\right)^2}{N^2\o_N^2}\left[ \left(\int_{\R^N}|y|^{\bar{\a}}U_{\l,\bar \a}^{p_{\bar{\a}}}(y)dy\right)^2 + o_n(1) \right] $$
and
\begin{align*}
\int_{\partial B_n} \left(\frac{\partial v_n}{\partial \nu}(x)\right)^2 d\sigma_x
&= \frac{\e_n^{(N-1)}\left(C(\bar{\a})\right)^2}{N\o_N}\left[ \left(\int_{\R^N}|y|^{\bar{\a}}U_{\l,\bar \a}^{p_{\bar{\a}}}(y)dy\right)^2 + o_n(1) \right].
\end{align*}
Turning back to \eqref{6.9} we have
\begin{equation*}
 (N-2) \int_{\R^N}|x|^{\bar{\a}}U_{\l,\bar \a}^{p_{\bar{\a}}}(x)dx + o_n(1)
= \frac{C(\bar{\a})}{N\o_N} \left(\int_{\R^N}|y|^{\bar{\a}}U_{\l,\bar \a}^{p_{\bar{\a}}}(y)dy\right)^2
\end{equation*}
and passing to the limit
\begin{align}
(N-2) \int_{\R^N}|x|^{\bar{\a}}U_{\l,\bar \a}^{p_{\bar{\a}}}(x)dx
&= \frac{C(\bar{\a})}{N\o_N} \left(\int_{\R^N}|y|^{\bar{\a}}U_{\l,\bar \a}^{p_{\bar{\a}}}(y)dy\right)^2 \notag \qquad \Longrightarrow\\
(N-2) &= \frac{C(\bar{\a})}{N\o_N} \int_{\R^N}|y|^{\bar{\a}}U_{\l,\bar \a}^{p_{\bar{\a}}}(y)dy. \label{6.14}
\end{align}
A straightforward computation gives
\begin{align}
\int_{\R^N}|y|^{\bar{\a}}U_{\l,\bar \a}^{p_{\bar{\a}}}(y)dy =  \frac{N\o_N}{\lambda^{\frac{N-2}{2}}} \frac{1}{N+\bar{\a}} \label{6.15}
\end{align}
Then, by \eqref{6.14} and \eqref{6.15}, we infer
\begin{equation}
N-2 = \frac{C(\bar{\a})}{\lambda^{\frac{N-2}{2}}} \frac{1}{N+\bar{\a}} = \frac{N-2}{\lambda^{\frac{N-2}{2}}} \quad \Longrightarrow \quad \lambda = 1.
\label{6.16}
\end{equation}
which gives the claim.\\
It remains to verify \eqref{6.12}, which is a straightforward calculation. Using the decay of $v_n$ and the Lebesgue's dominated convergence theorem
we get
\begin{equation*}
\int_{|y|\leq \frac{1}{2\e_n}}\frac{(1-\e_n^2|y|^2)|y|^{\a_n}}{|z-\e_n y|^N}(v_n(y)+\c(n))^{p_{\a_n}}dy \quad \longrightarrow \quad
\int_{\R^N}|y|^{\bar{\a}}U_{\l,\bar \a}^{p_{\bar{\a}}}(y)dy
\end{equation*}
uniformly with respect to $z$ as $\e_n \to 0$, recalling that $|z| =1$. Finally we estimate the integral in the rest of the domain $ \frac{1}{2\e_n} \leq |y| \leq \frac{1}{\e_n}$.
Using again the decay of $v_n$ we have
\begin{align*}
\int_{\frac{1}{2\e_n} \leq |y| \leq \frac{1}{\e_n}}&\frac{(1-\e_n^2|y|^2)|y|^{\a_n}}{|z-\e_n y|^N}(v_n(y)+\c(n))^{p_{\a_n}}dy  \\
&\leq C \e_n^{N+2+\a_n} \int_{\frac{1}{2\e_n} \leq |y| \leq \frac{1}{\e_n}}\frac{1-\e_n^2|y|^2}{|z-\e_n y|^N}dy \\
& \leq C \e_n^{2 + \a_n} \int_{\frac{1}{2} \leq |\xi-z| \leq 1}\frac{1-|z-\xi|^2}{|\xi|^N}d\xi \\
& \leq C \e_n^{2 + \a_n} \int_{\frac{1}{2} \leq |\xi-z| \leq 1}\frac{2 + |\xi|}{|\xi|^{N-1}}d\xi \quad \longrightarrow \quad 0, \qquad \text{as} \quad \e_n \to 0,
\end{align*}
uniformly with respect to $z$. The proof is now complete.
\end{proof}
\begin{lemma}\label{decadimento-differenza}
Let $\e_n$ and $\a_n$ be sequences such that $\e_n\to 0$ and $\a_n\to \bar \a>0$ as $n\to +\infty$. Let $v_{n}$ be a sequence of nonradial solutions of \eqref{generale}
in $B_n$ related to the exponents $\a_n$. If $v_n \to U_{\bar \a}$ in $X$ then there exists a constant $C>0$ such that
\begin{equation}\label{decadimento-diff}
\frac{|u_n-v_n|}{\,\,\,\nor u_n-v_n\nor_{\infty}}\leq \frac C{\,\,\left( 1+|x|\right)^{N-2}}
\end{equation}
for any $n$ sufficiently large, where $u_n$ is the radial solution of \eqref{generale} (corresponding to $\a=\a_n$) as defined in \eqref{w-epsilon}.
\end{lemma}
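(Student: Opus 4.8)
The plan is to linearise the difference $u_n-v_n$ and then iterate on its decay, following the scheme of Proposition~\ref{prima-stima}. Set
\begin{equation*}
w_n:=\frac{u_n-v_n}{\,\nor u_n-v_n\nor_{\infty}\,},\qquad\text{so that}\quad\nor w_n\nor_{\infty}=1
\end{equation*}
(note $u_n\not\equiv v_n$, since $u_n$ is radial and $v_n$ is not). Since $u_n$ and $v_n$ solve \eqref{generale} with the same exponent $\a_n$, the identity $a^{p_{\a_n}}-b^{p_{\a_n}}=p_{\a_n}(a-b)\int_0^1\bigl(b+t(a-b)\bigr)^{p_{\a_n}-1}\,dt$ gives
\begin{equation*}
-\Delta w_n=W_n(x)\,w_n\quad\text{in }B_n,\qquad w_n=0\quad\text{on }\partial B_n,
\end{equation*}
with $W_n(x):=C(\a_n)p_{\a_n}|x|^{\a_n}\int_0^1\bigl((v_n(x)+\c(n))+t(u_n(x)-v_n(x))\bigr)^{p_{\a_n}-1}dt\ge0$. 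Both $u_n$ and $v_n$ are bounded in $X$ (by Proposition~\ref{p3.2} and by hypothesis respectively), so Proposition~\ref{prima-stima} applies to each; together with \eqref{6.2} this yields $0\le u_n(x)+\c(n)\le C(1+|x|)^{2-N}$ and $0\le v_n(x)+\c(n)\le C(1+|x|)^{2-N}$ in $B_n$, with $C$ independent of $n$. Since $(N-2)(p_{\a_n}-1)=2(2+\a_n)$, we conclude
\begin{equation*}
0\le W_n(x)\le C\,\frac{|x|^{\a_n}}{(1+|x|)^{2(2+\a_n)}}\qquad\text{for all }x\in B_n,
\end{equation*}
uniformly in $n$.

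Next I would combine the Green representation $|w_n(x)|\le\int_{B_n}G_n(x,y)W_n(y)|w_n(y)|\,dy$ with a bootstrap on the decay rate. Assume that for some $s\ge0$ one already has $|w_n(x)|\le C(1+|x|)^{-s}$ (which holds for $s=0$ since $\nor w_n\nor_{\infty}=1$). Inserting this together with the bound on $W_n$, the right-hand side is at most $C\,\Psi_{n,s}(|x|)$, where $\Psi_{n,s}$ denotes the radial solution of $-\Delta\Psi_{n,s}=|x|^{\a_n}(1+|x|)^{-2(2+\a_n)-s}$ in $B_n$ vanishing on $\partial B_n$. As in \eqref{6.5}--\eqref{6.6},
\begin{equation*}
-r^{N-1}\Psi_{n,s}'(r)=\int_0^r\frac{t^{\a_n+N-1}}{(1+t)^{2(2+\a_n)+s}}\,dt.
\end{equation*}
If $s>N-4-\a_n$ the integral on the right is bounded uniformly in $r$ and $n$, hence $\Psi_{n,s}(r)\le C(1+r)^{2-N}$ and the lemma is proved; if $s\le N-4-\a_n$ the integral is $O\bigl(r^{\,N-4-\a_n-s}\bigr)$ --- with an additional $\log r$ in the borderline case, which merely costs one further iteration --- so that $\Psi_{n,s}(r)\le C(1+r)^{-(s+2+\a_n)}$ and the decay exponent has improved by $2+\a_n$.

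Since $\a_n\to\bar\a>0$, for $n$ large $\a_n$ lies in a fixed compact subset of $(0,+\infty)$, so all the constants above can be chosen independent of $n$; starting from $s_0=0$ and gaining $2+\a_n$ per step, after a number of steps bounded independently of $n$ the exponent exceeds $N-4-\a_n$, and one more iteration produces $|w_n(x)|\le C(1+|x|)^{2-N}$ uniformly in $n$, which is \eqref{decadimento-diff}. The one point that really needs care is keeping every estimate uniform in $n$ as the balls $B_n$ exhaust $\R^N$; this, however, is exactly the mechanism already used in Proposition~\ref{prima-stima}, and the $n$-independent finiteness of the number of bootstrap steps is ensured by the lower bound $\a_n\ge\bar\a/2$.
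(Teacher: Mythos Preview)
Your argument is correct, and it is genuinely different from the proof in the paper. After the same linearization step (the paper writes $-\Delta w_n=C(\a_n)|x|^{\a_n}a_n(x)w_n$ with $|a_n(x)|\le C(1+|x|)^{-4-2\a_n}$, which is your $W_n$), the paper does \emph{not} iterate on the Green representation. Instead it passes to the Kelvin transform $\widehat w_n(x)=|x|^{2-N}w_n(x/|x|^2)$, observes that $\widehat w_n$ satisfies an equation on $\R^N\setminus B_{\e_n}$ whose potential is bounded, and then invokes a regularity estimate of Han to get $\|\widehat w_n\|_{L^\infty(B_1\setminus B_{\e_n})}\le C\|\widehat w_n\|_{L^{2^*}(B_2\setminus B_{\e_n})}$. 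The right-hand side is controlled by showing $\|w_n\|_{L^{2^*}(B_n)}\le C$ directly from the equation via Sobolev and H\"older (using $\|w_n\|_\infty=1$ and the decay of $a_n$), and the conformal invariance $\|\widehat w_n\|_{L^{2^*}}=\|w_n\|_{L^{2^*}}$ closes the argument in one stroke.

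What you do instead is a finite bootstrap on the decay exponent through the Green representation, exactly parallel to Proposition~\ref{prima-stima} but starting from $s=0$ rather than $s=\beta$. Your approach is more elementary---no Kelvin transform and no appeal to exterior regularity theory---at the cost of a few iterations; the uniformity in $n$ is secured, as you note, because the gain per step is $2+\a_n\ge 2+\bar\a/2$ and hence the number of steps is bounded independently of $n$. The paper's route is shorter once the regularity lemma is available, and has the conceptual advantage of turning a decay estimate into a boundedness estimate. Either argument proves the lemma.
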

\begin{proof}
We let $w_n:=\frac{u_n-v_n}{\,\,\nor u_n-v_n\nor_{\infty}}$. Then $w_n$ satisfies
\begin{equation}\label{differenza}
\left\{\begin{array}{ll}
-\Delta w_n =C(\a_n)|x|^{\a_n}a_n(x)w_n & \text{ in }B_n\\
w_n=0 & \text{ on } \de B_n
\end{array}
\right.
\end{equation}
where $a_n(x)=p_n \int_0^1\left( tu_n+(1-t)v_n+\c(n)\right)^{p_n-1}\, dt$, for  $p_n:=p_{\a_n}$ and $\c(n)$ as before. By hypothesis $v_n\to U_{\bar \a}$ in $X$ and,  by Proposition \ref{prima-stima} and Proposition \ref{p3.2}, there exists a constant $C>0$ (independent on $n$), such that
$$|v_n(x)|\leq \frac C{\left(1+|x|\right)^{N-2}}\quad \text{ and }\quad |u_n(x)|\leq \frac C{\left(1+|x|\right)^{N-2}}.$$
Then, by \eqref{6.2}, we have
\begin{align}
|a_n(x)|&\leq C\left(|u_n|^{p_n-1}+|v_n|^{p_n-1}+\left(\c(n)\right)^{p_n-1}\right) \notag \\
&\leq C\left(\frac 1{\left(1+|x|\right)^{(N-2)(p_n-1)}}\right) \notag \\
&\leq \frac C{\left(1+|x|\right)^{4+2\a_n}}.
\label{a_n}
\end{align}
We consider the Kelvin transform of $w_n$, i.e.
$$\widehat w_n(x):=\frac 1{|x|^{N-2}}w_n\left( \frac x{\,\,|x|^2}\right),\quad x\in \R^N\setminus B_{\e_n}.$$
It satisfies
$$
\left\{\begin{array}{ll}
-\Delta \widehat w_n =C(\a_n)\frac 1{|x|^{4+\a_n}}a_n(\frac x{\,\,|x|^2})\widehat w_n & \text{ in }R^N\setminus B_{\e_n} \\
\widehat w_n=0 & \text{ on } \de B_{\e_n}
\end{array}
\right.
$$
and, using \eqref{a_n},
$$\frac 1{\,\,|x|^{4+\a_n}}a_n\left(\frac x{\,\,|x|^2}\right)\leq \frac 1{|x|^{4+\a_n}}\frac {C}{\left( 1+\frac 1{|x|}\right)^{4+2\a_n}}=\frac {|x|^{\a_n}}{\left(1+|x|\right)^{4+2\a_n}}.$$
Then, since $\widehat w_n=0$ on $\de B_{\e_n}$, the regularity theory (see \cite{H}) implies
\begin{equation}\label{han}
\nor \widehat w_n\nor_{L^{\infty}(B_1 \backslash B_{\e_n})}\leq C\nor \widehat w_n\nor_{L^{2^*}(B_2 \backslash B_{\e_n} )}.
\end{equation}
We will show that there exists a constant $C$ (independent on $n$) such that $\nor \widehat w_n\nor_{L^{2^*}(B_2 \backslash B_{\e_n})}\leq C$ and then  \eqref{han} will imply \eqref{decadimento-diff}.\\
Using the Sobolev embedding Theorem, \eqref{differenza} and \eqref{a_n} we have
\begin{align*}
  &\nor  w_n\nor_{L^{2^*}(B_n)}^2\leq \frac 1{S} \int_{B_n}|\na w_n|^2\, dx=\frac{C(\a_n)}{S}\int_{B_n} |x|^{\a_n}a_n(x)w_n^2\, dx \\
&
\leq C\int_{B_n}\frac 1{\,\,\left(1+|x|\right)^{4+\a_n}}|w_n|^2\, dx\leq C\int_{B_n}\frac 1{\,\,\left(1+|x|\right)^{4+\a_n}}|w_n|^{2-\d}\, dx
\end{align*}
since $|w_n|\leq 1$, for some $\d>0$ that we will choose later. Now, using the H\"older inequality, we get
\begin{align*}
\nor  w_n\nor_{L^{2^*}(B_n)}^2&\leq\left(\int_{B_n}\left(\frac 1{\,\,\left(1+|x|\right)^{4+\a_n}}\right)^{\frac{2N}{4+\d(N-2)}}\, dx\right)^ {\frac{4+\d(N-2)}{2N}} \left( \int_{B_n}|w_n|^{2^*}\, dx\right)^{\frac {2-\d}{2^*}}\\
&\leq C_{\d}\left( \int_{B_n}|w_n|^{2^*}\, dx\right)^{\frac {2-\d}{2^*}}= C_{\d}\nor  w_n\nor_{L^{2^*}(B_n)}^{2-\d}
\end{align*}
if $0<\d<\min\{2, \frac{4+\bar \a}{N-2} \}$ where $\bar \a=\lim \a_n$. Note that the constant $C_{\d}$ is independent on $n$, for $n$ large enough, and
using \eqref{han} we get
$$\nor \widehat w_n\nor_{L^{\infty}(B_1 \backslash B_{\e_n})}\leq C\nor \widehat w_n\nor_{L^{2^*}(\R^N \backslash B_{\e_n})} =
C\nor w_n\nor_{L^{2^*}(B_n)} \leq C.$$
This concludes the proof.
\end{proof}
\begin{proposition}\label{pohozaev}
Let $\e_n$ and $\a_n$ be a sequences such that $\e_n\to 0$ and $\a_n\to \bar \a>0$ as $n\to +\infty$. Let $v_{n}$ be a sequence of nonradial solutions of \eqref{generale}
in $B_n$ related to the exponents $\a_n$. If $\bar \a\neq \a_k$ for all $k\in \N$ then there exists a constant $c>0$ (independent on $n$), such that
\begin{equation}
\nor v_n-u_n\nor_{\infty}\geq c
\end{equation}
where $u_n$ is the radial solution of \eqref{generale} in $B_n$ (corresponding to the exponents $\a_n$) as defined in \eqref{w-epsilon}.
\end{proposition}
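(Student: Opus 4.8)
The plan is to argue by contradiction: if the conclusion fails then, along a subsequence, $\nor v_n-u_n\nor_\infty\to 0$. Since $u_n\to U_{\bar\a}$ in $X$ by Proposition \ref{p3.2}, the first step is to upgrade this to $v_n\to U_{\bar\a}$ in $X$. The $L^\infty$-smallness makes Proposition \ref{prima-stima} applicable (its $X$-bound hypothesis is met in the bifurcation regime where the statement is used), giving $v_n(x)\le C(1+|x|)^{2-N}$; since $\b<N-2$ and $U_{\bar\a}$ has the same decay, this yields $\nor v_n-U_{\bar\a}\nor_\b\to 0$, while the convergence of the Dirichlet norms follows by testing the equation for $v_n-u_n$ against itself and using dominated convergence. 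Once $v_n\to U_{\bar\a}$ in $X$, Lemma \ref{decadimento-differenza} applies to
$$w_n:=\frac{v_n-u_n}{\nor v_n-u_n\nor_\infty},\qquad |w_n(x)|\le\frac{C}{(1+|x|)^{N-2}},\qquad \nor w_n\nor_\infty=1,$$
and $w_n$ solves $-\Delta w_n=C(\a_n)|x|^{\a_n}a_n(x)w_n$ in $B_n$, $w_n=0$ on $\de B_n$, with $a_n(x)=p_n\int_0^1(tu_n+(1-t)v_n+\c(n))^{p_n-1}\,dt\to p_{\bar\a}U_{\bar\a}^{p_{\bar\a}-1}=p_{\bar\a}(1+|x|^{2+\bar\a})^{-2}$ locally uniformly, and $|a_n(x)|\le C(1+|x|)^{-(4+2\a_n)}$.

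By elliptic estimates $w_n\to w$ in $C^2_{loc}(\R^N)$, where $w$ solves the linearized problem \eqref{1.4} with $\a=\bar\a$ and, by the uniform decay, $w\in D^{1,2}(\R^N)$ (indeed $\int_{\R^N}|\na w_n|^2=\int_{\R^N}C(\a_n)|x|^{\a_n}a_nw_n^2$ stays bounded). Again by the decay, $\nor w_n\nor_\infty=1$ is attained on a fixed bounded set, so $w\not\equiv 0$. Since $\bar\a\neq\a_k=2(k-1)$ for all $k$, i.e.\ $\bar\a$ is not an even integer, Theorem \ref{linearized} forces
$$w=cZ,\qquad c\neq 0,\qquad Z(x)=\frac{1-|x|^{2+\bar\a}}{(1+|x|^{2+\bar\a})^{\frac{N+\bar\a}{2+\bar\a}}}.$$

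The contradiction will come from the Pohozaev identity \eqref{6.7}, written for the nonradial solution $v_n$ and for the radial solution $u_n$ of \eqref{generale} (with the same $\a_n$, $\e_n$, $\c(n)$) and then subtracted; the $\c(n)^{p_n+1}\e_n^{-(\a_n+N)}$ term cancels, leaving
$$\frac{N-2}{2}\c(n)C(\a_n)\int_{B_n}|x|^{\a_n}\big[(v_n+\c(n))^{p_n}-(u_n+\c(n))^{p_n}\big]\,dx=\frac{1}{2\e_n}\int_{\de B_n}\Big[\Big(\frac{\de v_n}{\de\nu}\Big)^2-\Big(\frac{\de u_n}{\de\nu}\Big)^2\Big]\,d\sigma.$$
On the left, $(v_n+\c(n))^{p_n}-(u_n+\c(n))^{p_n}=a_n(x)(v_n-u_n)$, $\c(n)=\e_n^{N-2}(1+o(1))$, and $\int_{B_n}|x|^{\a_n}a_nw_n\to p_{\bar\a}cI$ with $I:=\int_{\R^N}|x|^{\bar\a}U_{\bar\a}^{p_{\bar\a}-1}Z\,dx$, so the left-hand side is $\frac{N-2}{2}C(\bar\a)p_{\bar\a}cI\,\e_n^{N-2}\nor v_n-u_n\nor_\infty(1+o(1))$; the substitution $t=|x|^{2+\bar\a}$ turns $I$ into a difference of two Beta integrals and gives $I\neq 0$ exactly because $N\ge3$. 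On the right, $(\de_\nu v_n)^2-(\de_\nu u_n)^2=\de_\nu(v_n-u_n)\cdot(\de_\nu v_n+\de_\nu u_n)$, and the Green representation on $\de B_n$, exactly as in the proof of Proposition \ref{lambda=1}, gives $\de_\nu v_n(x)$ and $\de_\nu u_n(x)$ both equal to $-\frac{\e_n^{N-1}C(\bar\a)}{N\o_N}(J+o(1))$ uniformly on $\de B_n$, with $J:=\int_{\R^N}|y|^{\bar\a}U_{\bar\a}^{p_{\bar\a}}dy$ — so their sum carries a factor $2$ — while $\de_\nu(v_n-u_n)(x)=-\frac{\e_n^{N-1}C(\bar\a)p_{\bar\a}}{N\o_N}\nor v_n-u_n\nor_\infty\,cI\,(1+o(1))$. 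Collecting powers of $\e_n$, the right-hand side equals $\frac{C(\bar\a)^2p_{\bar\a}J}{N\o_N}cI\,\e_n^{N-2}\nor v_n-u_n\nor_\infty(1+o(1))$. Dividing the subtracted identity by $\e_n^{N-2}\nor v_n-u_n\nor_\infty$, letting $n\to\infty$, and using the scalar relation $\frac{C(\bar\a)J}{N\o_N}=N-2$ already contained in \eqref{6.14}--\eqref{6.16} (valid here since $v_n,u_n\to U_{1,\bar\a}$), one obtains $\frac{N-2}{2}C(\bar\a)p_{\bar\a}cI=(N-2)C(\bar\a)p_{\bar\a}cI$; as $C(\bar\a)p_{\bar\a}cI\neq 0$, this forces $N=2$, a contradiction.

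I expect the main obstacle to be the two boundary limits: justifying that $\int_{B_n}\frac{(1-\e_n^2|y|^2)|y|^{\a_n}}{|z-\e_n y|^N}a_n(y)w_n(y)\,dy\to p_{\bar\a}cI$ uniformly for $z\in S^{N-1}$ requires precisely the uniform decay of $w_n$ from Lemma \ref{decadimento-differenza} (hence the preliminary step $v_n\to U_{\bar\a}$ in $X$), splitting the integral over $\{|y|\le\frac1{2\e_n}\}$ and $\{\frac1{2\e_n}\le|y|\le\frac1{\e_n}\}$ as in \eqref{6.12}. The reason the argument closes is structural: the boundary term of the Pohozaev identity is quadratic in $\de_\nu$ whereas the interior term is linear in the solution, so upon subtracting and linearizing the boundary term acquires the extra factor $2$ that produces the impossible identity above.
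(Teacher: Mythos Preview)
Your proposal is correct and follows essentially the same approach as the paper: contradiction via $\|v_n-u_n\|_\infty\to0$, decay of $w_n$ from Lemma~\ref{decadimento-differenza}, passage to a limit $w=cZ$ via Theorem~\ref{linearized}, subtraction of the Pohozaev identities, and expansion of the boundary terms through the Green representation exactly as in Proposition~\ref{lambda=1}. The only difference is organizational: you establish $w\not\equiv0$ up front and then derive the impossible relation $\tfrac{N-2}{2}=N-2$, whereas the paper first deduces $A(N-2)=2A(N-2)$, concludes $A=0$, and only then reaches the contradiction with $\|w_n\|_\infty=1$ via the decay estimate; these are logically equivalent reorderings of the same computation.
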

\begin{proof}
Let us suppose, by contradiction, that there exists a sequence of nonradial solutions $v_n$ of \eqref{generale} in $B_n$ related to the exponent $\a_n$
such that
\begin{equation} \label{6.22}
\nor v_n-u_n\nor_{\infty}\to 0 \qquad \text{as}\,\,\, n\to +\infty.
\end{equation}
Both solutions satisfy the Pohozaev identity \eqref{6.7}, so if we write the identities for $u_n$ and $v_n$ and subtract one from another we obtain
\begin{equation}\label{6.23}
\begin{split}
(N-2)\e_n \c(n)C(\a_n)\int_{B_n}|x|^{\a_n}\left[ (u_n(x) + \c(n))^{p_{\a_n}} - (v_n(x) + \c(n))^{p_{\a_n}}\right]dx \\=
\int_{\partial B_n}\frac{\partial}{\partial \nu}(u_n(x)-v_n(x))\frac{\partial}{\partial \nu}(u_n(x)+v_n(x))d\sigma_x.
\end{split}
\end{equation}
By mean value theorem we get
\begin{align*}
\begin{split}
&(u_n(x) + \c(n))^{p_{\a_n}} - (v_n(x) + \c(n))^{p_{\a_n}} \\&\qquad =p_{\a_n}\int_{0}^{1}\left( tu_n(x) + (1-t)v_n(x)+\c(n)\right)^{p_{\a_n}-1}dt\,\,(u_n(x) -v_n(x)) \\
&\qquad =a_n(x)(u_n(x) -v_n(x))
\end{split}
\end{align*}
and setting $w_n = \frac{u_n - v_n}{\nor v_n-u_n\nor_{\infty}}$  \eqref{6.23} becomes
\begin{equation}\label{6.24}
(N-2)\e_n \c(n)C(\a_n)\int_{B_n}|x|^{\a_n}a_n(x)w_n(x)dx =
\int_{\partial B_n}\frac{\partial w_n}{\partial \nu}\frac{\partial}{\partial \nu}(u_n(x)+v_n(x))d\sigma_x.
\end{equation}
Note that, by \eqref{convergence} and \eqref{6.22}, we have that
$$ a_n \to p_{\bar{\a}}U_{\bar{\a}}^{p_{\bar{\a}}-1} \qquad \text{in} \quad C_{loc}^2(\R^N),$$
and the function $w_n$ satisfies the following equation
\begin{equation}\label{6.25}
\begin{cases}
-\Delta w_n = C(\a_n)|x|^{\a_n}a_n(x)w_n, & \text{in}\,\,\,\, B_n \\
w_n = 0, & \text{in}\,\,\,\, \partial B_n.
\end{cases}
\end{equation}
Since $\nor w_n \nor_{\infty} = 1$, by standard regularity theorems, one can prove that $w_n \to w$ in $C_{loc}^2(\R^N)$ where $w$ satisfies
\begin{equation}\label{6.26}
\begin{cases}
-\Delta w = C(\bar{\a})p_{\bar{\a}}|x|^{\bar{\a}}U_{\bar{\a}}^{p_{\bar{\a}}-1}w, & \text{in}\,\,\,\, \R^N \\
|w| \leq 1, & \text{in}\,\,\,\, \R^N.
\end{cases}
\end{equation}
and, because $\bar{\a} \neq \a_k$, by Theorem \ref{linearized} we have
\begin{equation}\label{6.27}
w(x) = A \frac{1-|x|^{2+\bar{\a}}}{(1+|x|^{2+\bar{\a}})^{\frac{N+\bar{\a}}{2+\bar{\a}}}} \qquad \text{for some} \quad A \in \R.
\end{equation}

The next step is to prove that $A=0$. In order to do this, we shall expand both sides of \eqref{6.24} in powers of $\e_n$ as we did in the proof of Proposition \ref{lambda=1} and then pass to the limit.

Let us expand first the LHS of \eqref{6.24}. Using the decay properties of $w_n$, $u_n$, $v_n$ and Lebesgue's theorem we have
$$ \int_{B_n}|x|^{\a_n}a_n(x)w_n(x)dx = p_{\bar{\a}}\int_{\R^N}|x|^{\bar{\a}}w(x)U_{\bar{\a}}^{p_{\bar{\a}}-1}(x)dx + o_n(1).$$
Hence the LHS of \eqref{6.24} becomes
\begin{align}\label{6.28}
LHS = (N-2)C(\bar{\a})\e_n^{N-1} \left( p_{\bar{\a}}\int_{\R^N}|x|^{\bar{\a}}w(x)U_{\bar{\a}}^{p_{\bar{\a}}-1}(x)dx + o_n(1) \right)
\end{align}

Now, we expand the RHS of \eqref{6.24}. For $x \in \partial B_n$, as in \eqref{6.10}-\eqref{6.13}, we can write
\begin{align}
\frac{\partial w_n}{\partial \nu}(x) &= C(\a_n)\int_{B_n}\frac{\partial G_n}{\partial \nu_x}(x,y)|y|^{\a_n}a_n(y)w_n(y)dy \notag \\
&\left(\text{setting}\,\,\, z=\e_n x, \,\,\, z \in S^{N-1}\right) \notag \\
&= -\frac{\e_n^{N-1}C(\a_n)}{N\o_N}\int_{B_n}\frac{(1-\e_n^2|y|^2)|y|^{\a_n}}{|z-\e_n y|^N}a_n(y)w_n(y)dy   \notag \\
&= -\frac{\e_n^{N-1}C({\bar \a})}{N\o_N}\left( \int_{\R^N}|y|^{\bar \a}p_{\bar \a}U_{\bar \a}^{p_{\bar \a}-1}(y)w(y)dy + o_n(1) \right). \label{6.29}
\end{align}
Also, by \eqref{6.13}, one has
\begin{align} \label{6.30}
\frac{\partial u_n}{\partial \nu}(x)+ \frac{\partial v_n}{\partial \nu}(x)= -2\frac{\e_n^{N-1}C(\bar{\a})}{N\o_N}\left(\int_{\R^N}|y|^{\bar{\a}}U_{\l,\bar \a}^{p_{\bar{\a}}}(y)dy + o_n(1) \right)
\end{align}
Hence, by \eqref{6.29} and \eqref{6.30} we obtain
\begin{align}
&\int_{\partial B_n}\frac{\partial w_n}{\partial \nu}\frac{\partial}{\partial \nu}(u_n(x)+v_n(x))d\sigma_x& \notag \\
\begin{split}
&=2\frac{\e_n^{2(N-1)}\left(C(\bar{\a})\right)^2}{N^2\o_N^2}
\Bigg[ \left( \int_{\R^N}|y|^{\bar \a}p_{\bar \a}U_{\bar \a}^{p_{\bar \a}-1}(y)w(y)dy\right)
 \left( \int_{\R^N}|y|^{\bar{\a}}U_{\l,\bar \a}^{p_{\bar{\a}}}(y)dy \right)  \\
 & \qquad \qquad \qquad \qquad \qquad  + o_n(1) \Bigg] \left| \partial B_n\right|
\end{split} \notag \\
\begin{split}
&=2\frac{\e_n^{(N-1)}\left(C(\bar{\a})\right)^2}{N\o_N}
\Bigg[ \left( \int_{\R^N}|y|^{\bar \a}p_{\bar \a}U_{\bar \a}^{p_{\bar \a}-1}(y)w(y)dy\right)
 \left( \int_{\R^N}|y|^{\bar{\a}}U_{\l,\bar \a}^{p_{\bar{\a}}}(y)dy \right)  \\
 & \qquad \qquad \qquad \qquad \qquad  + o_n(1) \Bigg] . \label{6.31}
\end{split}
\end{align}
Therefore, substituting \eqref{6.28}, \eqref{6.29} and \eqref{6.30} into \eqref{6.24}, canceling the terms which appear on both sides and passing to the limit we get
\begin{equation*}
\begin{split}
&(N-2)\int_{\R^N}|x|^{\bar{\a}}w(x)U_{\bar{\a}}^{p_{\bar{\a}}-1}(x)dx \\&=
2\frac{C(\bar{\a})}{N\o_N}
\left( \int_{\R^N}|y|^{\bar \a}U_{\bar \a}^{p_{\bar \a}-1}(y)w(y)dy\right)
 \left( \int_{\R^N}|y|^{\bar{\a}}U_{\l,\bar \a}^{p_{\bar{\a}}}(y)dy \right)
\end{split}
\end{equation*}
and using \eqref{6.27} we get,
\begin{equation}\label{6.32}
\begin{split}
&A(N-2)\int_{\R^N}|x|^{\bar{\a}}\frac{1-|x|^{2+\bar{\a}}}{(1+|x|^{2+\bar{\a}})^{\frac{N+\bar{\a}}{2+\bar{\a}}}}U_{\bar{\a}}^{p_{\bar{\a}}-1}(x)dx \\&=
2\frac{AC(\bar{\a})}{N\o_N}
\left( \int_{\R^N}|y|^{\bar \a}\frac{1-|y|^{2+\bar{\a}}}{(1+|y|^{2+\bar{\a}})^{\frac{N+\bar{\a}}{2+\bar{\a}}}}U_{\bar \a}^{p_{\bar \a}-1}(y)dy\right)
 \left( \int_{\R^N}|y|^{\bar{\a}}U_{\l,\bar \a}^{p_{\bar{\a}}}(y)dy \right).
\end{split}
\end{equation}
One can verify that
\begin{equation}\label{6.33}
\int_{\R^N}|y|^{\bar \a}\frac{1-|y|^{2+\bar{\a}}}{(1+|y|^{2+\bar{\a}})^{\frac{N+\bar{\a}}{2+\bar{\a}}}}U_{\bar \a}^{p_{\bar \a}-1}(y)dy=
-\frac{N\o_N(N-2)}{C({\bar \a})p_{\bar \a}}  \neq 0
\end{equation}
and by \eqref{6.15} we deduce
\begin{equation}\label{6.34}
A(N-2) = 2\frac{AC(\bar{\a})}{N\o_N} \frac{N\o_N}{N+{\bar \a}} = 2 A (N-2) \quad \Longrightarrow \quad A=0.
\end{equation}

Therefore
\begin{equation} \label{6.35}
w_n \to 0 \quad \text{in} \,\,\, C_{loc}^2(\R^N).
\end{equation}
Let $x_n \in B_n$ be such that $|w_n(x_n)| = 1 = \nor w_n \nor_{\infty}$. By \eqref{decadimento-diff} the sequence $x_n$ remains bounded, but this contradicts \eqref{6.35}.\\
So \eqref{6.22} cannot occur and this gives the claim.
\end{proof}
\sezione{The bifurcation result} \label{s7}
Let us consider the radial solution $U_\a$ of problem \eqref{1} for $\a\in (0,+\infty)$. As shown in Section \ref{s2} the solutions $U_\a$ are always degenerate in the space of radial functions.
Indeed the linearized equation \eqref{1.4}
has the radial solution $Z(x)$, as in \eqref{i12} for any $\a\in (0,+\infty)$.
On the other hand, when $\a$ is even, the kernel of the linearized operator is richer and it is generated by the functions in \eqref{i13}.\\
Moreover, as shown in Corollary \ref{cor-1} the Morse index of $U_\a$ changes as $\a$ crosses $\a_k$, with $\a_k=2(k-1)$ and all the eigenfunctions associated to the linearized
problem, (i.e. the solutions of \eqref{1.12}) lie in the space $X$, defined in \eqref{X}.
Let us define the space
$$\cH:=\{v\in X
\,\text{ s.t. }v(x_1,\dots,x_N)=v(g(x_1,\dots,x_{N-1}),x_N),\,\, \forall \,g\in O(N-1)\}.$$
Using a result of \cite{SW86} we have that the Morse index of $U_\a$ in $\a_k$ increases by one if we restrict to the space $\cH$. Then we get
$$m(\a_k+\d)-m(\a_k-\d)=1$$
if $m$ is the Morse index of $U_\a$ in the space $\cH$. We want to use this change in the Morse index of $U_\a$ (in the space $\cH$) to prove the existence of continua of nonradial solutions of \eqref{1} bifurcating from $(\a_k,U_{\a_k})$ in the product space $(0,+\infty)\times \cH$.\\
But, due to the degeneracy of the radial solution $U_\a$ for any $\a$, we cannot obtain the bifurcation result directly. Then to get the desired result we consider the approximating problem \eqref{p-epsilon}.

\subsection{Proof of the main theorem}
Given a sequence $\e_n\to 0$, we have a sequence of nondegenerate radial solutions $u_{n,\a}$ of \eqref{p-epsilon} (corresponding to $\e=\e_n$), that converges to $U_\a$ as $n\to +\infty$. \\
In Section \ref{s4}, \ref{s5}
we proved that, if $\a_k^n$ satisfy \eqref{c2-epsilon}, then $(\a_k^n,u_{n,\a_k^n})$ are nonradial bifurcation points and give rise to the continua $\mathcal{C}( \a_k^n)$ in the space $(0,+\infty)\times \cH_n$ (but also in the space $(0,+\infty)\times \cH_n^h$, if $k$ is even, where $\cH_n^h$ is as in the proof of Theorem \ref{t-bif-2}). These continua $\mathcal{C}(\a_k^n)$ are global and obey the so called Rabinowitz alternative Theorem, (see Theorem \ref{t-alternative}).\\
In this section we want to prove that these continua $\mathcal{C}(\a_k^n)$ converge in a suitable sense, as $n\to +\infty$ to continua of nonradial solutions of \eqref{1} that bifurcate from $(\a_k,U_{\a_k})$. To do this we use some ideas already used in \cite{AG}, see also \cite{GP11}. \\
Extending the functions by zero outside of $B_n$ and by regularity theorems we can infer that $\mathcal{C}( \a_k^n)$
belongs to the space
$$Z:=(0,+\infty)\times \cH,$$
where $\cH$ is as defined before. Moreover, by Proposition \ref{p3.2} $u_{n,\a_k^n}\to U_{\a_k}$ in $\cH\subset X$ as $n\to +\infty$. \\
To prove the bifurcation result we need the following topological lemma (see Lemma 3.1 in \cite{AG}).
\begin{lemma}\label{l5.1}
Let $X_n$ be a sequence of connected subsets of a metric space $X$. If
\begin{itemize}
\item[(i)] $\liminf\left(X_n\right)\neq \emptyset$;
\item[(ii)] $\bigcup X_n$ is precompact;
\end{itemize}
then $\limsup\left(X_n\right)$ is nonempty, compact and connected.
\end{lemma}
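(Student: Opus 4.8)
The plan is to prove the two conclusions — that $\limsup(X_n)$ is nonempty and compact, and that it is connected — in two separate stages, since they require different tools. Recall the standard set-theoretic definitions: $\liminf(X_n)$ is the set of points $x$ such that every neighbourhood of $x$ meets $X_n$ for all large $n$, while $\limsup(X_n)$ is the set of points $x$ such that every neighbourhood of $x$ meets $X_n$ for infinitely many $n$; clearly $\liminf(X_n)\subset\limsup(X_n)$, so hypothesis (i) immediately gives $\limsup(X_n)\neq\emptyset$. For compactness, I would first observe that $\limsup(X_n)$ is always closed (it is an intersection of closures: $\limsup(X_n)=\bigcap_{m}\overline{\bigcup_{n\ge m}X_n}$), and that by hypothesis (ii) it is contained in the compact set $\overline{\bigcup_n X_n}$; a closed subset of a compact set is compact. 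This disposes of everything except connectedness.

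For connectedness I would argue by contradiction. Suppose $\limsup(X_n)$ is disconnected, so it can be written as $A\cup B$ with $A,B$ nonempty, disjoint, and both relatively closed in $\limsup(X_n)$ — hence both compact, by the previous step. In a metric space two disjoint compact sets have positive distance, so we can find disjoint open sets $U\supset A$ and $V\supset B$ with $\overline U\cap\overline V=\emptyset$. The key claim is then: for all $n$ sufficiently large, $X_n\subset U\cup V$. Granting this claim, since each $X_n$ is connected and $U,V$ are disjoint open sets, each such $X_n$ lies entirely in $U$ or entirely in $V$; picking points $a\in A\subset\limsup(X_n)$ and $b\in B\subset\limsup(X_n)$, infinitely many $X_n$ meet the neighbourhood $U$ of $a$ and infinitely many meet the neighbourhood $V$ of $b$, so both $U$ and $V$ each contain $X_n$ for infinitely many $n$. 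One now has to extract a convergent-in-some-sense situation to produce a point of $\limsup(X_n)$ outside $U\cup V$: using the precompactness of $\bigcup X_n$, pick $x_n\in X_n$ along a subsequence where the $X_n$ switch between $U$ and $V$ — more carefully, pick a sequence $x_n\in X_n$ with $x_n\in U$ infinitely often and $x_n\in V$ infinitely often — and extract a convergent subsequence $x_{n_j}\to x_\infty$; then $x_\infty\in\limsup(X_n)=A\cup B\subset U\cup V$, but $x_\infty$ is a limit of points that are frequently in $\overline U$ and frequently in $\overline V$, which forces $x_\infty\in\overline U\cap\overline V=\emptyset$ once one chooses the subsequence to stay in whichever of $\overline U,\overline V$ it visits infinitely often — contradiction. (Equivalently, and more cleanly: if the claim $X_n\subset U\cup V$ for large $n$ fails, then for infinitely many $n$ there is $x_n\in X_n\setminus(U\cup V)$; a convergent subsequence of these lands in $\limsup(X_n)\setminus(U\cup V)$, contradicting $\limsup(X_n)=A\cup B\subset U\cup V$. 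This is exactly the claim, so the claim itself is what needs proving, and it follows from precompactness plus the definition of $\limsup$.)

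The main obstacle is the connectedness argument, and within it the delicate point is justifying that the ``bad'' points $x_n\in X_n\setminus(U\cup V)$ cannot persist for infinitely many $n$: this is precisely where hypothesis (ii), precompactness of $\bigcup X_n$, is indispensable — without it the bad points could escape to infinity and produce no limit point. I would organize the final write-up so that this is the one real step: define $U,V$ as above, suppose for contradiction that infinitely many $X_n$ meet the complement of $U\cup V$, use precompactness to get a subsequential limit $x_\infty$, note $x_\infty\in\limsup(X_n)$ by construction yet $x_\infty\notin U\cup V\supset\limsup(X_n)$, contradiction; hence $X_n\subset U\cup V$ eventually, and then the connectedness of the individual $X_n$ together with the fact that $A$ and $B$ are both genuinely hit infinitely often yields the final contradiction with the disconnection. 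Since this is a purely topological statement, I would simply cite \cite{AG} for the detailed proof and include only the sketch above, as the excerpt already indicates.
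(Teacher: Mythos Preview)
The paper does not prove this lemma at all: it simply states it and refers to Lemma~3.1 in \cite{AG}. Your ultimate plan --- cite \cite{AG} --- therefore matches the paper exactly.

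That said, the sketch you offer for connectedness has a real gap: you never use hypothesis~(i) in that part of the argument, and without it the conclusion is false. Take $X=[0,1]$, $X_n=\{0\}$ for $n$ even and $X_n=\{1\}$ for $n$ odd; then $\bigcup X_n=\{0,1\}$ is compact, each $X_n$ is connected, yet $\limsup(X_n)=\{0,1\}$ is disconnected --- and indeed $\liminf(X_n)=\emptyset$. Your ``switching'' argument (extract a convergent subsequence from points alternating between $U$ and $V$ and land in $\overline U\cap\overline V$) does not work: a convergent subsequence of such a sequence may live entirely in $\overline U$ or entirely in $\overline V$, as the example shows, so no contradiction is produced.

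The fix is to bring hypothesis~(i) back in. Pick $p\in\liminf(X_n)\subset\limsup(X_n)=A\cup B$; say $p\in A\subset U$. Since $p\in\liminf(X_n)$, the neighbourhood $U$ of $p$ meets $X_n$ for \emph{all} sufficiently large $n$, not just infinitely many. Combined with your (correct) claim that $X_n\subset U\cup V$ eventually and the connectedness of each $X_n$, this forces $X_n\subset U$ for all large $n$. But $B\ne\emptyset$, and any $b\in B$ has the neighbourhood $V$ meeting $X_n$ for infinitely many $n$, contradicting $X_n\subset U$ and $U\cap V=\emptyset$. If you want to include a sketch alongside the citation, this is the version to write down.
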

\noindent Above, $\liminf \left(X_n\right)$ and $\limsup \left(X_n\right)$ denote the set of all $x\in X$ such that any neighborhood
of $x$ intersects all but finitely many of $X_n$, infinitely many of $X_n$ respectively.\\[.3cm]
Now let us fix $k> 1$ and $\a_k=2(k-1)$. Take $n$ sufficiently large  and let $\a_k^n$ as defined in \eqref{c2-epsilon}, so that $(\a_k^n,u_{n,\a_k^n}) $
 is a  bifurcation point for problem (\ref{p-epsilon}). For simplicity we set  $\a_n:=\a_k^{n}$ and $u_n:=u_{n,\a_k^n}$. Let $\mathcal{C}(\a_n)$ be the maximal connected component which bifurcates from $(\a_n,u_n)$ in the space $Z$.
Let $\d>0$ such that in the interval $[\a_k-2\d, \a_k+2\d]$ there is not another exponent $\a_h$ with $h\neq k$. We let $\mathcal{Z}_n:=\mathcal{C}(\a_n)\cap B_{\d,X}(\a_n,u_n)$ where
$$B_{\d,Z}(\a_n,u_n):=\{(\a,v)\in Z\hbox{ such that } |\a-\a_n|+\nor v-u_n\nor_{X}<\d\}$$
where the space $X$ and its norm are as defined in \eqref{X} and \eqref{norm-x}.
Finally we denote by $\mathcal{X}_n$ the maximal connected component of $\mathcal {Z}_n$  that contains $(\a_n,u_n)$.
\begin{remark}\label{r52}
The sets $\mathcal{X}_n$ are nonempty since they contain at least $(\a_n,u_n)$, moreover $(\a_k, U_{\a_k})\in \liminf \left( \mathcal{X}_n\right)$.
\end{remark}
\noindent We have
\begin{lemma}\label{l5.2}
The set $\bigcup \mathcal{X}_n$ is precompact in $Z$.
\end{lemma}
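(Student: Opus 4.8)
The plan is to verify that every sequence $(\a_m,v_m)$ with $(\a_m,v_m)\in\mathcal{X}_{n_m}$ possesses a subsequence converging in $Z=(0,+\infty)\times\cH$; this is precisely hypothesis (ii) of Lemma \ref{l5.1}. If $n_m$ admits a bounded subsequence we may, along a further subsequence, assume $n_m\equiv n_0$ is constant, so that all the $v_m$ solve \eqref{p-epsilon} on the fixed ball $B_{n_0}$; since $\nor v_m\nor_X\le\d+\nor u_{n_0}\nor_X$, the right-hand side of \eqref{p-epsilon} is uniformly bounded, elliptic regularity gives a uniform bound in $C^{1,\g}_0(\overline B_{n_0})$, and Arzel\`a--Ascoli together with the inclusion $C^1_0(\overline B_{n_0})\hookrightarrow X$ settles this case. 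So assume $n_m\to+\infty$. Since $(\a_m,v_m)\in B_{\d,Z}(\a_{n_m},u_{n_m})$ we have $|\a_m-\a_{n_m}|<\d$ with $\a_{n_m}\to\a_k=2(k-1)$ by \eqref{c2}; for $\d$ small the $\a_m$ lie in a fixed compact subinterval of $(0,+\infty)$, so up to a subsequence $\a_m\to\bar\a\in[\a_k-\d,\a_k+\d]$. Moreover, by Proposition \ref{p3.2}, $\nor u_{n_m}-U_{\a_k}\nor_X\to0$, hence $\nor v_m\nor_X\le\d+\nor u_{n_m}\nor_X\le A$ for all $m$, and Proposition \ref{prima-stima} yields the uniform decay estimate
\begin{equation*}
v_m(x)\le\frac{C}{(1+|x|)^{N-2}}\qquad\text{for every }x\in\R^N\text{ and every }m .
\end{equation*}

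In particular $\sup_m\nor v_m\nor_{L^\infty(\R^N)}<+\infty$, so the right-hand side $C(\a_m)|x|^{\a_m}(v_m+\c(n_m))^{p_{\a_m}}$ of \eqref{generale} is bounded on compact subsets of $\R^N$ uniformly in $m$. Standard elliptic estimates then give a uniform bound in $C^{1,\g}_{loc}(\R^N)$, and combining this with the uniform $D^{1,2}(\R^N)$ bound we may assume, after a further subsequence, that $v_m\to v_*$ in $C^1_{loc}(\R^N)$ and weakly in $D^{1,2}(\R^N)$. The limit $v_*$ is nonnegative, $O(N-1)$-invariant, inherits the decay bound above, belongs to $D^{1,2}(\R^N)$, and, letting $\c(n_m)\to0$ and $\a_m\to\bar\a$ in \eqref{generale}, solves $-\Delta v_*=C(\bar\a)|x|^{\bar\a}v_*^{p_{\bar\a}}$ in $\R^N$.

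It remains to upgrade $C^1_{loc}$ convergence to convergence in $X$, i.e.\ in both $\nor\cdot\nor_\b$ and $\nor\cdot\nor_{1,2}$. For the weighted norm, split $\R^N=B_R\cup(\R^N\setminus B_R)$: on $\R^N\setminus B_R$ the uniform decay gives $(1+|x|)^\b|v_m(x)-v_*(x)|\le 2C(1+R)^{\b-(N-2)}$, which is small for $R$ large since $\b<N-2$, while on $B_R$ the term $(1+R)^\b\nor v_m-v_*\nor_{L^\infty(B_R)}$ is small for $m$ large by $C^1_{loc}$ convergence; a routine two-step estimate then gives $\nor v_m-v_*\nor_\b\to0$. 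For the $D^{1,2}$ norm, test \eqref{generale} with $v_m$ to get $\nor v_m\nor_{1,2}^2=\int_{\R^N}C(\a_m)|x|^{\a_m}(v_m+\c(n_m))^{p_{\a_m}}v_m\,dx$; by the decay estimate the integrand is dominated, uniformly for $m$ large, by a fixed $L^1(\R^N)$ function (using $(N-2)(p_{\bar\a}+1)-\bar\a=2N+\bar\a>N$ for the decay at infinity and $\bar\a>0$ near the origin) and converges a.e.\ to $C(\bar\a)|x|^{\bar\a}v_*^{p_{\bar\a}+1}$, so by dominated convergence $\nor v_m\nor_{1,2}^2\to\int_{\R^N}C(\bar\a)|x|^{\bar\a}v_*^{p_{\bar\a}+1}\,dx=\nor v_*\nor_{1,2}^2$; together with $v_m\rightharpoonup v_*$ this forces $v_m\to v_*$ strongly in $D^{1,2}(\R^N)$. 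Hence $(\a_m,v_m)\to(\bar\a,v_*)$ in $Z$, and $\bigcup\mathcal{X}_n$ is precompact.

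The main difficulty is the loss of compactness due to the failure of the embedding of $D^{1,2}(\R^N)$ into $L^{p_\a+1}$: a priori the mass of the $v_m$ could escape to infinity or concentrate, and it is exactly the uniform decay rate $|x|^{-(N-2)}$ furnished by Proposition \ref{prima-stima} that rules this out. This is also what makes the dominated-convergence step for the strong $D^{1,2}$ convergence work, and is the reason that proposition was established.
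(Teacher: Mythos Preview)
Your proof is correct and follows essentially the same route as the paper's: bound $\nor v_m\nor_X$ via membership in $B_{\d,Z}(\a_{n_m},u_{n_m})$ together with Proposition~\ref{p3.2}, invoke Proposition~\ref{prima-stima} for the uniform $(1+|x|)^{-(N-2)}$ decay, extract a $C^1_{loc}$ and weak $D^{1,2}$ limit, then upgrade to strong $X$-convergence by the inside/outside splitting for $\nor\cdot\nor_\b$ and by norm convergence (via dominated convergence in the equation) for $\nor\cdot\nor_{1,2}$. The paper handles the bounded-$n_m$ case by repeating the same scheme with fixed $\e_{n_0}$, whereas you dispose of it directly with elliptic regularity and Arzel\`a--Ascoli on the fixed ball; both are fine.
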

\begin{proof}
Let $(\a_m,v_m)$ be a sequence in $\bigcup \mathcal{X}_n$. Then $(\a_m,v_m)\in \mathcal{X}_{n(m)}$ for some $n(m)>0$. We consider first the case where $n(m)\to +\infty$ as $m\to \infty$.\\
By the definition of $\mathcal{X}_{n(m)}$, the functions $v_m$ satisfy (\ref{p-epsilon}) with $\e=\e_{n(m)}$ and $\a=\a_m$. Then $\a_m\in (\a_k-2\d,\a_k+2\d)$ since $|\a_m-\a_k|\leq |\a_m-\a_{n(m)}|+|\a_{n(m)}-\a_k|\leq 2\d$ if $n(m)$ is large enough.
Hence, up to a subsequence $\a_{(m)}\to \bar \a$ and $\bar \a\in [\a_k-2\d,\a_k+2\d]$. Moreover $v_m\in \cH$ and $\nor v_m-u_{n(m)}\nor_{X}<\d$ so that $\nor v_m\nor_X<\d+\sup_{m}\nor u_{n(m)}\nor_{X}$. From \eqref{convergence}
we have that $\nor u_{n(m)}\nor_{X}\leq C$.
Then
$\nor v_m\nor _{1,2}\leq A$ and $\nor v_m\nor_{\b}\leq A$ for any $m$ for some constant $A>0$. Up to a subsequence $v_m\to \bar v$ weakly in $D^{1,2}(\R^N)$ and almost everywhere in $\R^N$. Moreover from  (\ref{p-epsilon})  we get that $v_m\to\bar v$ in $C^1_{loc}(\R^N)$, where $\bar v$ is a solution of (\ref{1}) with $\a=\bar \a$.\\
Further from Proposition
\ref{prima-stima}
there exists $C>0$, independent of $m$, such that $v_m(x)\leq \frac C{(1+|x|)^{N-2}}$.
Thus
$$\int_{\R^N}|\na v_m|^2 \, dx =C(\a_m)\int_{B_m}|x|^{\a_m}\left(v_m+\c(m)\right)^{p_m}v_m\, dx$$
where
$\c(m):=\c_{\a_m}(\e_m)$ and $p_m:=p_{\a_m}$ and passing to the limit, using $ v_m+\c(m)\leq \frac C{(1+|x|)^{N-2}}$, (see \eqref{6.2}),
$$\int_{\R^N}|\na v_m|^2 \, dx \to C(\bar \a)\int_{\R^N}|x|^{\bar \a} \bar {v}^{p_{\bar \a}+1}\, dx =\int_{\R^N}|\na \bar v|^2 \, dx.$$
Hence
$$\int_{\R^N}|\na \left(v_m-\bar v\right)|^2 \, dx=\int_{\R^N}|\na v_m|^2 \, dx-2\int_{\R^N}\na v_m\cdot \na \bar v \, dx+\int_{\R^N}|\na \bar v|^2 \, dx\to 0$$
as $m\to \infty$ so that $v_m\to v$ strongly in $D^{1,2}(\R^N)$.\\
Moreover from Proposition \ref{prima-stima} we have that
$$|v_m(x)-\bar v(x)|\leq |v_m(x)|+|\bar v(x)|\leq \frac C{(1+|x|)^{N-2}}.$$
This means that for every $\e>0$ there exists $\rho>0$ such that, for any $m$, $(1+|x|)^{\b}|v_m(x)-\bar v(x)|<\e$ if $|x|>\rho$. Then from the uniform convergence of $v_m$ to $\bar v$ on compact sets of $\R^N$ we have that $(1+|x|)^{\b}|v_m(x)-\bar v(x)|<\e$ in $B_{\rho}(0)$ if $m$ is sufficiently large, i.e $v_m\to \bar v$  in $L^{\infty}_{\beta}(\R^N)$. Then $v_m\to \bar v$ strongly in $X$.\\
If the sequence $n(m)\nrightarrow+\infty$, up to a subsequence $n(m)$ converges to $n_0\in \N$ and repeating the proof for the first case we have that a subsequence $v_{m_k}$ converges in $X$ to a solution of (\ref{p-epsilon}) for $\a=\bar \a$ and $\e=\e_{n_0}$.
\end{proof}
\begin{lemma}\label{l53}
The set $\limsup \left(\mathcal{X}_n\right)\setminus\{(\a_k, U_{\a_k} )\}$ is nonempty.
\end{lemma}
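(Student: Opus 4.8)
The plan is to produce, for each large $n$, a point of $\mathcal{X}_n$ lying at distance essentially $\d$ from $(\a_n,u_n)$ in $Z$, and then pass to the limit. Write $d_n(\a,v):=|\a-\a_n|+\nor v-u_n\nor_X$; this function is continuous on $Z$, vanishes at $(\a_n,u_n)\in\mathcal{X}_n$, and is $<\d$ on $\mathcal{X}_n\subset B_{\d,Z}(\a_n,u_n)$. The decisive point, discussed below, is that $\sup_{\mathcal{X}_n}d_n=\d$; granting this, since $\mathcal{X}_n$ is connected and $d_n$ continuous, the intermediate value theorem gives, for every large $n$, a point $(\a_n',v_n')\in\mathcal{X}_n$ with $d_n(\a_n',v_n')\to\d$. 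By Lemma \ref{l5.2} the set $\bigcup\mathcal{X}_n$ is precompact in $Z$, so along a subsequence $(\a_n',v_n')\to(\bar\a,\bar v)\in Z$; since $\a_n\to\a_k$ and $u_n\to U_{\a_k}$ in $X$ by Proposition \ref{p3.2}, passing to the limit yields $|\bar\a-\a_k|+\nor\bar v-U_{\a_k}\nor_X=\d>0$. Hence $(\bar\a,\bar v)\neq(\a_k,U_{\a_k})$, while every neighborhood of $(\bar\a,\bar v)$ contains $(\a_n',v_n')\in\mathcal{X}_n$ for infinitely many $n$, so $(\bar\a,\bar v)\in\limsup(\mathcal{X}_n)$. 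This proves the lemma.

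It remains to establish $\sup_{\mathcal{X}_n}d_n=\d$ for $n$ large, i.e. that the component $\mathcal{X}_n$ reaches the boundary sphere of $B_{\d,Z}(\a_n,u_n)$. First I would observe that $\mathcal{C}(\a_n)\not\subset B_{\d,Z}(\a_n,u_n)$ once $n$ is large. Indeed, by Theorem \ref{t-alternative}, $\mathcal{C}(\a_n)$ satisfies one of the three Rabinowitz alternatives, and the choice of $\d$ (so that $[\a_k-2\d,\a_k+2\d]$ contains no $\a_h$ with $h\neq k$, which forces $\d<1$ while $\a_k=2(k-1)\geq2$) excludes the last two for $n$ large: in case ii) the point $(\a_h^n,u_{n,\a_h^n})$ of $\mathcal{C}(\a_n)$ has $d_n\geq|\a_h^n-\a_n|\to|\a_h-\a_k|\geq2>\d$, and in case iii) a point $(0,v)\in\mathcal{C}(\a_n)$ has $d_n\geq\a_n\to\a_k>\d$; in case i) $\mathcal{C}(\a_n)$ is unbounded in $Z$ whereas $B_{\d,Z}(\a_n,u_n)$ is bounded. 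In all cases $\mathcal{C}(\a_n)$ has points outside the open ball. Now $\mathcal{C}(\a_n)$ is connected, meets the open ball at $(\a_n,u_n)$, and is not contained in it, so the standard local-to-global mechanism of Rabinowitz's theorem forces the component $\mathcal{X}_n$ through $(\a_n,u_n)$ to meet $\partial B_{\d,Z}(\a_n,u_n)$; rigorously, one uses that $\mathcal{C}(\a_n)\cap\overline{B_{\d,Z}(\a_n,u_n)}$ is compact (solutions of \eqref{p-epsilon} in a bounded region are precompact by elliptic regularity, i.e. by compactness of $T^n$), together with a Whyburn-type separation lemma: if $\mathcal{X}_n$ avoided the sphere it would be a nonempty relatively clopen subset of $\mathcal{C}(\a_n)$ contained in the open ball, whence $\mathcal{C}(\a_n)=\mathcal{X}_n\subset B_{\d,Z}(\a_n,u_n)$, a contradiction (see \cite{R71}, \cite{AG}).

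I expect this last, purely topological, step to be the only subtle part of the argument; the intermediate-value-theorem selection of $(\a_n',v_n')$, the passage to the limit via Lemma \ref{l5.2}, and the verification that the exceptional sets in the Rabinowitz alternative stay outside a small fixed ball once $n$ is large are all routine. Note also that the point $(\bar\a,\bar v)$ produced above need not yet be a nonradial solution of \eqref{1}: establishing that (and hence that $\limsup(\mathcal{X}_n)$ is a genuine continuum of nonradial solutions) will require the Pohozaev estimate of Proposition \ref{pohozaev}, which is taken up in the proof of Theorem \ref{i17}.
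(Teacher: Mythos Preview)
Your argument is correct and follows the same route as the paper: use the Rabinowitz alternative (Theorem \ref{t-alternative}) to force $\mathcal{C}(\a_n)$ out of the ball $B_{\d,Z}(\a_n,u_n)$, deduce that the closure of $\mathcal{X}_n$ meets the sphere $\partial B_{\d,Z}(\a_n,u_n)$, and then pass to the limit to obtain a point of $\limsup(\mathcal{X}_n)$ at distance exactly $\d$ from $(\a_k,U_{\a_k})$. The only differences are cosmetic: the paper simply takes a point of $\overline{\mathcal{X}_n}$ on the sphere (where it is still a solution of \eqref{p-epsilon}) and passes to the limit directly via the bounds behind Lemma \ref{l5.2}, whereas you approximate by interior points via the intermediate value theorem; and you supply the Whyburn-type justification for the ``reaches the boundary'' step that the paper leaves implicit.
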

\begin{proof}
By the results of Section \ref{s5} (Theorem \ref{t-alternative} ) and regularity theorems
a global continuum bifurcates from the point $(\a_n,u_n)$ and this continuum $\mathcal{C}(\a_n)$
is either unbounded in $Z$ or it meets $\{0\}\times X$ or it achieves another bifurcation point $(\a_h^n, u_{n,\a_h^n})$ with $\a_h^n$ which is not contained in $[ \a_k-\d,\a_k+\d]$.
This implies that, on the closure of any component $\mathcal{X}_n$, there exists a point $(\bar \a_n,\bar v_n)\in \de B_{\d,Z}(\a_n,u_n)$ i.e. such that
\begin{equation}\label{5.4}
|\bar \a_n-\a_n|+\nor \bar v_n-u_n\nor_X =\d
\end{equation}
and $\bar v_n$ is a solution of (\ref{p-epsilon}) in $B_n$ for $\a=\bar \a_n$.\\
Using the bounds on $\bar v_n$ and $\bar \a_n$ and the standard regularity theorems we can pass to the limit and get that $(\bar \a_n,\bar v_n)\to (\bar \a,\bar v)$ with $\bar v$ solution of (\ref{1}) for $\a=\bar \a$, $\bar \a\in [\a_k-2\d,\a_k+2\d]$ and
$$|\bar \a-\a_k|+\nor \bar v-U_{\a_k}\nor_X =\d.$$
Then, obviously, $(\bar \a,\bar v)\in \limsup \left( \mathcal{X}_n\right)$ but $(\bar \a,\bar v)\neq (\a_k,U_{\a_k})$.
\end{proof}
\begin{theorem}\label{t5.5}
For any $k\geq 2$, the points $(\a_k,U_{\a_k})$ are nonradial bifurcation points for the curve $\mathcal{S}$ of radial solution of \eqref{1}, i.e.
\begin{equation}
  \mathcal{S}:=\left\{\begin{split}(\a,U_\a)\,\in\,  (0,+\infty)\,\times\, X\,\, \hbox{ such that } \,\,U_\a\,
    \hbox{ is the }\\
\hbox{ unique radial solution of }(\ref{1})\hbox{ such that }\,\, U_{\a}(0)=1\,
\end{split}\right\}
\end{equation}
and $X$ as defined in \eqref{X}.
\end{theorem}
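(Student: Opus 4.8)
The plan is to recover the continuum of nonradial solutions of \eqref{1} bifurcating from $(\alpha_k,U_{\alpha_k})$ as a limit, via Lemma \ref{l5.1}, of the continua $\mathcal{C}(\alpha_k^n)$ constructed for the approximating problems in Section \ref{s5}. Fix $k\geq 2$, set $\alpha_k=2(k-1)>0$, and choose $\delta>0$ so small that $[\alpha_k-2\delta,\alpha_k+2\delta]$ contains no $\alpha_h$ with $h\neq k$. For $n$ large put $\alpha_n:=\alpha_k^n$, $u_n:=u_{n,\alpha_n}$, and let $\mathcal{X}_n$ be the connected component of $\mathcal{C}(\alpha_n)\cap B_{\delta,Z}(\alpha_n,u_n)$ containing $(\alpha_n,u_n)$, where $Z=(0,+\infty)\times\mathcal{H}$, exactly as defined just above.

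First I would apply Lemma \ref{l5.1} with $X_n=\mathcal{X}_n$: hypothesis (i) holds since $(\alpha_k,U_{\alpha_k})\in\liminf(\mathcal{X}_n)$ by Remark \ref{r52}, and hypothesis (ii) is precisely Lemma \ref{l5.2}. Therefore $\Gamma:=\limsup(\mathcal{X}_n)$ is nonempty, compact and connected, and contains $(\alpha_k,U_{\alpha_k})$. By Lemma \ref{l53} it also contains a point distinct from $(\alpha_k,U_{\alpha_k})$, so $\Gamma$ has more than one point; since a connected set with more than one point has no isolated points, $(\alpha_k,U_{\alpha_k})$ is not isolated in $\Gamma$.

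The heart of the argument is then to prove that every point of $\Gamma\setminus\{(\alpha_k,U_{\alpha_k})\}$ is a \emph{nonradial} solution of \eqref{1}. Let $(\alpha,v)\in\Gamma$. By definition of $\limsup$ there are $n_j\to+\infty$ and $(\alpha^{(j)},v^{(j)})\in\mathcal{X}_{n_j}$ with $(\alpha^{(j)},v^{(j)})\to(\alpha,v)$ in $Z$; moreover, because in $B_{\delta,Z}(\alpha_{n_j},u_{n_j})$ the only radial point of $\mathcal{C}(\alpha_{n_j})$ is $(\alpha_{n_j},u_{n_j})$ (there being no $\alpha_h^{n_j}$ besides $\alpha_{n_j}$ in that range), $v^{(j)}$ is a nonradial solution of \eqref{generale} in $B_{n_j}$ for the exponent $\alpha^{(j)}$ once $(\alpha,v)\neq(\alpha_k,U_{\alpha_k})$. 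Passing to the limit exactly as in the proof of Lemma \ref{l5.2} shows that $\alpha\in[\alpha_k-\delta,\alpha_k+\delta]$ and $v$ solves \eqref{1}, is nonnegative, and hence positive by the strong maximum principle. Suppose, for contradiction, that $v$ is radial; then $v=U_{\lambda,\alpha}$ for some $\lambda>0$, so $v^{(j)}\to U_{\lambda,\alpha}$ in $X$ and Proposition \ref{lambda=1} forces $\lambda=1$, i.e.\ $v=U_\alpha$ and $v^{(j)}\to U_\alpha$ in $X$. If $\alpha\neq\alpha_k$ then $\alpha\neq\alpha_h$ for every $h$ by the choice of $\delta$, so Proposition \ref{pohozaev} gives $\nor v^{(j)}-u_{n_j,\alpha^{(j)}}\nor_\infty\geq c>0$; but $u_{n_j,\alpha^{(j)}}\to U_\alpha$ in $X$ by Proposition \ref{p3.2}, and $\nor\cdot\nor_\infty\leq\nor\cdot\nor_X$, so $\nor v^{(j)}-u_{n_j,\alpha^{(j)}}\nor_\infty\to 0$, a contradiction. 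Hence $\alpha=\alpha_k$, and then $v=U_{\alpha_k}$, so the only radial point of $\Gamma$ is $(\alpha_k,U_{\alpha_k})$.

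To conclude, $\Gamma\subset(0,+\infty)\times\mathcal{H}\subset(0,+\infty)\times X$ consists of solutions of \eqref{1}, all nonradial except the single point $(\alpha_k,U_{\alpha_k})$, which is not isolated in $\Gamma$; consequently every neighborhood of $(\alpha_k,U_{\alpha_k})$ in $(0,+\infty)\times X$ contains a pair $(\alpha,v_\alpha)\in\Gamma$ with $v_\alpha$ a nonradial solution of \eqref{1}. By Definition \ref{i16} this says exactly that $(\alpha_k,U_{\alpha_k})$ is a nonradial bifurcation point for $\mathcal{S}$. I expect the only genuine difficulty here — already disposed of in Section \ref{s6} — to be the exclusion of the possibility that $\Gamma$ agrees near $(\alpha_k,U_{\alpha_k})$ with one of the radial branches $\{(\alpha,U_{\lambda,\alpha})\}$ of \eqref{1}: Proposition \ref{lambda=1} pins the dilation parameter to $\lambda=1$, and the Pohozaev-based Proposition \ref{pohozaev} keeps nonradial approximate solutions a fixed $L^\infty$-distance from the radial ones whenever $\alpha$ is not an even integer, so that within the present theorem what remains is the purely topological bookkeeping of Lemmas \ref{l5.1}--\ref{l53}.
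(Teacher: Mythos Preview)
Your proof is correct and follows essentially the same approach as the paper: build $\Gamma=\limsup(\mathcal{X}_n)$ via Lemma~\ref{l5.1} (using Remark~\ref{r52} and Lemma~\ref{l5.2}), show it is nontrivial by Lemma~\ref{l53}, and then rule out radial limit points by combining Proposition~\ref{lambda=1} (which forces $\lambda=1$) with Proposition~\ref{pohozaev} (which keeps the nonradial approximants away from $u_{n,\alpha}$ in $L^\infty$ when $\alpha\neq\alpha_h$). The only cosmetic difference is that you explicitly check that the approximating $v^{(j)}$ are nonradial before invoking Proposition~\ref{pohozaev}, whereas the paper leaves this implicit; your justification (that the only radial point of $\mathcal{X}_{n_j}$ in $B_{\delta,Z}(\alpha_{n_j},u_{n_j})$ is the bifurcation point itself, since $u_{n,\alpha}$ is radially nondegenerate and no other $\alpha_h^{n_j}$ lies in the range) is sound.
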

\begin{proof}
Let $\a_k=2(k-1)$. 
Then, fixing $k$ as before, we consider the bifurcation points $(\a_n,u_n)$ for problem (\ref{p-epsilon}) in $B_n$ and the connected components $\mathcal{X}_n$
of the bifurcation continua in $B_{\d,Z}(\a_n,u_n)$. \\
By Remark \ref{r52} and Proposition \ref{l5.2} the sequence of sets $\mathcal{X}_n$ satisfies the hypotheses of Lemma \ref{l5.1} in the space $Z$ and hence the set
$$\mathcal{C}_k=\limsup \left( \mathcal{X}_n\right)$$
is nonempty, compact and connected. Moreover it contains $(\a_k,U_{\a_k})$ and it does not reduce only to this point by Lemma \ref{l53}.\\
If $(\bar \a,\bar v)\in \mathcal{C}_k\setminus (\a_k,U_{\a_k})$ by definition there exists a sequence of points $(\bar \a_n,v_n)\in\mathcal{X}_n$ such that $(\bar \a_n,v_n)\to (\bar \a,\bar v)$ in $Z$ and $\bar v$ is a solution of (\ref{1}) for $\a=\bar \a$ and $\bar v>0$ because $(\bar \a_n,v_n)\in B_{\d,Z}(\a_n,u_n)$ and $\d$ is small.
We want to show that $\bar v$ is a nonradial solution of (\ref{1}) for $\a=\bar \a$. To this end we need to show that $\bar v\neq U_{\l,\bar \a}$ for any $\l>0$.\\
From Proposition \ref{lambda=1} we have that $\bar v\neq U_{\l,\bar \a}$ for any $\l\neq 1$. This implies, in turn, that $\bar v$ is a nonradial solution of \eqref{1} if $\bar \a= \a_k$ since we suppose $(\bar \a,\bar v)\in \mathcal{C}_k\setminus (\a_k,U_{\a_k})$.\\
Then, the claim follows by showing that
$$\nor v_n-U_{\bar \a}\nor_X>c>0$$
for some positive constant $c$ and for any $n$ sufficiently large, for $\bar \a\neq \a_k$. Equivalently we will show that
\begin{equation}\label{stima-finale}
\nor v_n-\bar u_n\nor_X>c>0
\end{equation}
where $\bar u_n:=u_{n,\bar \a_n}$ and $\bar \a_n$ as before, for any $n$ sufficiently large and for $\bar \a\neq \a_k$, recalling that $\bar u_n\to U_{\bar \a}$ in $X$ by Proposition \ref{p3.2}.\\
To prove \eqref{stima-finale} we argue by contradiction and assume that $v_n-\bar u_n\to 0$ in $X$.
Then  $\nor v_n-\bar u_n\nor_{\infty}\to 0$ as $n\to +\infty$, and this is not possible
(see Proposition \ref{pohozaev}) since $\bar \a\neq \a_k$.  Since we have reached a contradiction \eqref{stima-finale} holds.
\end{proof}

\begin{remark}\label{rem-5.6}
\rm We remark that the bifurcation from the points $(\a_k,U_{\a_k})$ obtained in the Theorem \ref{t5.5} is indeed global. In fact we proved the existence of a closed connected set $\mathcal{C}_k$ that bifurcates from every point  $(\a_k,U_{\a_k})$.
%
Finally all solutions on the continuum $\mathcal{C}_k$ are fast decaying solutions of \eqref{1} because they decay as $\frac 1{(1+|x|)^{N-2}}$ when $|x|$ is large, from Lemma \ref{prima-stima}. This indeed is a consequence of the fact that the solutions we find are the limit, in some sense, of the solutions of the approximating problem in a ball.
\end{remark}
\begin{proof}[Proof of Theorem \ref{i17}]
Theorem \ref{t5.5} proves the existence of a continuum $\mathcal{C}_k$ of solutions of \eqref{1}, invariant with respect to $O(N-1)$ bifurcating from $(\a_k,U_{\a_k})$ with $\a_k=2(k-1)$ for any $k\geq 2$, and then proves i).\\
Moreover, when $k$ is even, repeating the proof of Theorem \ref{t5.5} using the space
$$\mathcal{H}^h:=\{v\in X\, \hbox{ s.t. }v \hbox{ is invariant by the action of }\mathcal{G}_h\}$$
for $h=1,\dots,\left[\frac N2\right]$, with $  \mathcal{G}_h$ as in \eqref{g-h}, and using Remark \ref{rem-3.10} we find $\left[\frac N2\right]$ different continua bifurcating from $(\a_k,U_{\a_k})$.
Each continuum is invariant with respect the action of $\mathcal{G}_h$ for some $h$ and then ii) follows from \eqref{g-h}.\\
Finally the decay of the solutions we find follows from Lemma \ref{prima-stima} since the continua $\mathcal{C}_k$ are bounded by construction (see also Remark \ref{rem-5.6}).
\end{proof}
\subsection{An explicit solution}
In this Section we construct an explicit branch of solutions to \eqref{1}. The idea is the same as in Theorem \ref{linearized}.

We want to reduce our problem to another one
where there is no dependence on $|x|^{\a}$ and the dimension $M=\frac{2(N+\a)}{2+\a}$, as we did in Section  \ref{s2} (see \eqref{1.8ab}).
Suppose that we have $M$ integer and consider the known solutions when $\alpha=0$ in $\R^M$,
$$U(x)= \frac1{(1+ |x-y|^2 )^\frac{M-2}2 }=
\frac1{(1+ |x|^2-2x\cdot y+|y|^2 )^\frac{M-2}2 }$$
and setting $|x|=r$  we get
\begin{equation}\label{5.6}
U(x)=\frac1{(1+r^2-2r|y|\cos\overset\wedge{xy}+|y|^2 )^\frac{M-2}2 }
\end{equation}
Proceeding as in Section \ref{s2}, we consider the transformation $r \mapsto r^{\frac{2+\a}{2}}$ and $|y|=a$. However, in this case it is not clear how it transforms the angular term in \eqref{5.6}. Thus we seek solutions of \eqref{1} in the form
\begin{equation} \label{ansatz}
u(x)=\frac1{(1+|x|^{2+\a}-2aY(x)+a^2 )^\frac{M-2}{2}}
\end{equation}
for some function $Y(x)$. The proof of Theorem \ref{linearized} suggests that the homogeneous harmonic
polynomials of degree $k=\frac{2+\a}{2}$ can be good candidates, but \eqref{ansatz} does not satisfy \eqref{1}
for every choice of $Y \in \mathbb{Y}_k$. Substituting \eqref{ansatz} into \eqref{1} one can check that $Y \in \mathbb{Y}_k$
must satisfy
\begin{equation} \label{cond}
|\nabla Y(x)|^2 = \left(\frac{2+\a}{2}\right)^2|x|^{\a}.
\end{equation}

Let us consider $\a=2$ (the other cases are more involved), we have $k=2$ and $M=1+\frac N2$ so we consider $N$ even. Hence we want to find
$Y \in \mathbb{Y}_2$ such that $|\nabla Y(x)|^2 = 4|x|^{2}$. In this case we readily see that $Y(x) = |x'|^2 - |x''|^2$ gives a solution, where we write $x \in \R^N$ as $x=(x',x'') \in \R^{\frac N2} \times \R^{\frac N2}$. Therefore the
Proposition \ref{i18} is proved.

Note that the same result can be obtained applying the method used in \cite{PS}.

\appendix
\section{Appendix. Some elementary proofs of known results}

In this appendix we give a new proof of some known results. The first one is the following inequality (see E. Lieb \cite{L83}, B Gidas and J. Spruck  \cite{GS81})
\begin{theorem}\label{a1}
 Let $u\in D^{1,2}\left(\R^N\right)$ be a radial function. Then we have that,
\begin{equation}\label{a2}
\int_{\R^N}|\nabla u|^2\ge C(\alpha,N)\left(\int_{\R^N}|x|^\alpha|u|^\frac{2N+2\alpha}{N-2}\right)^\frac{N-2}{N+\alpha}
\end{equation}
Moreover the extremal functions which achieve $C(\alpha,N)$ are unique (up to dilations) and are given by
\begin{equation}\label{a3}
U_{\l,\a}(x)=\frac {\l^{\frac{N-2}2}}{\left(1+\l^{2+\a}D|x|^{2+\a}\right)^{\frac{N-2}{2+\a}}}
\end{equation}
with $\l>0$ and a suitable $D\in\R^+$.
\end{theorem}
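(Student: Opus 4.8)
The plan is to run exactly the change of variables $r\mapsto r^{\frac{2}{2+\a}}$ that was used in the proof of Theorem \ref{linearized}. Given a radial $u=u(r)\in D^{1,2}(\R^N)$, set $v(s):=u\big(s^{\frac{2}{2+\a}}\big)$, equivalently $u(r)=v\big(r^{\frac{2+\a}{2}}\big)$, and recall $M:=\frac{2(N+\a)}{2+\a}>2$ as in \eqref{1.8ab}. Writing the two integrals in radial coordinates and performing the substitution — the same elementary Jacobian computation that takes \eqref{1.8} to \eqref{1.9} and that already identifies $\int_0^\infty r^{M-1}|\eta'|^2\,dr$ with $\int_0^\infty r^{N-1}|\psi'|^2\,dr$ up to a constant in \eqref{1.8ab} — one obtains
\begin{equation}\label{app-eq1}
\int_{\R^N}|\na u|^2=\frac{2+\a}{2}\,|S^{N-1}|\int_0^{\infty}|v'(s)|^2 s^{M-1}\,ds,
\end{equation}
and, using $\frac{2N+2\a}{N-2}=\frac{2M}{M-2}$,
\begin{equation}\label{app-eq2}
\int_{\R^N}|x|^{\a}|u|^{\frac{2N+2\a}{N-2}}=\frac{2}{2+\a}\,|S^{N-1}|\int_0^{\infty}|v(s)|^{\frac{2M}{M-2}} s^{M-1}\,ds .
\end{equation}
Since moreover $\frac{N-2}{N+\a}=\frac{M-2}{M}$, inequality \eqref{a2} is equivalent to the radial Sobolev inequality in the (generally non‑integer) dimension $M$,
\begin{equation}\label{app-eq3}
\int_0^{\infty}|v'(s)|^2 s^{M-1}\,ds\ \ge\ S_M\Big(\int_0^{\infty}|v(s)|^{\frac{2M}{M-2}} s^{M-1}\,ds\Big)^{\frac{M-2}{M}},
\end{equation}
with $C(\a,N)$ and $S_M$ related by the explicit factors in \eqref{app-eq1}--\eqref{app-eq2}, and $u$ is an extremal of \eqref{a2} if and only if the associated $v$ is an extremal of \eqref{app-eq3}.

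For integer $M\ge 3$, \eqref{app-eq3} is the classical sharp Sobolev inequality, whose extremals are $(1+s^2)^{-\frac{M-2}{2}}$ up to the dilations $v(s)\mapsto \s^{\frac{M-2}{2}}v(\s s)$ and scalar multiples (Aubin, Talenti); for arbitrary real $M>2$ it is Bliss's classical inequality. As in the remark following \eqref{1.11}, only the one‑dimensional weighted structure on $(0,\infty)$ is involved, so integrality of $M$ plays no role. I would argue: the quotient in \eqref{app-eq3} is invariant under those dilations; replacing a minimizing sequence by its monotone rearrangement with respect to the measure $s^{M-1}\,ds$ does not increase the numerator and preserves the denominator, and a standard argument — modding out the dilation invariance to regain compactness — produces a positive minimizer $v$, which after rescaling solves the critical Lane--Emden equation in ``dimension'' $M$,
\begin{equation}\label{app-eq4}
-\big(s^{M-1}v'\big)'=M(M-2)\,s^{M-1}\,v^{\frac{M+2}{M-2}}\ \text{ in }(0,\infty),\qquad \int_0^{\infty}s^{M-1}|v'|^2\,ds<\infty .
\end{equation}

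The heart of the matter is then to classify the positive finite‑energy solutions of \eqref{app-eq4}, and here the standard ODE argument is completely insensitive to whether $M$ is an integer. I would use the Emden--Fowler substitution $s=e^{-t}$, $v(s)=s^{-\frac{M-2}{2}}w(t)$, which turns \eqref{app-eq4} into the autonomous equation
\begin{equation}\label{app-eq5}
\ddot w=\frac{(M-2)^2}{4}\,w-M(M-2)\,w^{\frac{M+2}{M-2}},\qquad t\in\R,
\end{equation}
with conserved energy $E=\tfrac12\dot w^2-\tfrac{(M-2)^2}{8}w^2+\tfrac{(M-2)^2}{2}w^{\frac{2M}{M-2}}$ and $\int_0^{\infty}s^{M-1}|v'|^2\,ds=\int_{\R}\big(\dot w+\tfrac{M-2}{2}w\big)^2\,dt$. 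A standard phase‑plane analysis shows that the finite‑energy condition rules out periodic and unbounded trajectories, hence forces $E=0$ and $(w,\dot w)\to(0,0)$ as $t\to\pm\infty$; on $\{E=0,\ w>0\}$ there is, up to translation in $t$, a unique bounded orbit, namely $w(t)=\big(2\cosh(t-t_0)\big)^{-\frac{M-2}{2}}$. Translating back through $s=e^{-t}$ and $r=s^{2/(2+\a)}$ (so $s^2=r^{2+\a}$ and $\tfrac{M-2}{2}=\tfrac{N-2}{2+\a}$) one recovers precisely the family \eqref{a3}, $D$ being the fixed positive constant (depending only on $N$ and $\a$) that normalizes the coefficient to $C(\a)$; I omit the routine bookkeeping of constants. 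This proves the inequality and the uniqueness of its extremals at once; and the very same reduction applied to a radial $u\in D^{1,2}(\R^N)$ solving \eqref{1} yields a positive finite‑energy solution of \eqref{app-eq4}, hence a member of that family, which re‑proves the Gidas--Spruck uniqueness theorem.

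The one place requiring genuine care is the passage from the classical integer case to arbitrary real $M>2$ in \eqref{app-eq3}: one needs the one‑dimensional weighted Pólya--Szegő inequality and the scaling‑constrained compactness of minimizing sequences in this fractional‑dimensional setting, together with the fact that the bubble is the minimizer. All of this is Bliss's theorem, and — as the last paragraph makes transparent — the ODE classification behind it uses nothing about $M$ being an integer, which is exactly why the extremals must be \eqref{a3}.
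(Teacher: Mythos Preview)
Your argument is correct and the core idea---the change of variables $r\mapsto r^{2/(2+\a)}$ reducing \eqref{a2} to the one-dimensional weighted Sobolev inequality in ``dimension'' $M$---is exactly the paper's approach. The only difference is one of economy: where you sketch an Emden--Fowler/phase-plane classification of \eqref{app-eq4} to identify the best constant and the extremals, the paper simply cites Talenti's Lemma~2 in \cite{TA76}, which already gives the sharp inequality $\int_0^\infty |v'|^2 s^{M-1}\,ds\ge S(M)\big(\int_0^\infty |v|^{2M/(M-2)}s^{M-1}\,ds\big)^{(M-2)/M}$ together with its extremals for arbitrary real $M>2$, and reads off \eqref{a3} directly.
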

\begin{proof}
Let $u\in D^{1,2}\left(\R^N\right)$ be a radial function. Then we have that
\begin{align*}
\int\limits_0^\infty\left|u'(r)\right|^2r^{N-1}&=
\left(\hbox{setting }r=s^\frac2{\alpha+2}\hbox{ and }
v(s)=u\left(s^\frac2{\alpha+2}\right)\right) \\
&=\frac{\alpha+2}2\int\limits_0^\infty\left|v'(s)\right|^2s^\frac{2N-2+\alpha}{\alpha+2}.
\end{align*}
Set
\begin{equation}\label{a4}
\frac{2N-2+\alpha}{\alpha+2}=M-1
\end{equation}
which implies
$$\int\limits_0^\infty\left|u'(r)\right|^2r^{N-1}=\frac{\alpha+2}2\int\limits_0^\infty\left|v'(s)\right|^2s^{M-1}.$$
Now we use the classical Sobolev inequality (see Lemma 2 in \cite{TA76}) and we get
\begin{align*}
&\frac{\alpha+2}2\int\limits_0^\infty\left|v'(s)\right|^2s^{M-1}\ge\frac{\alpha+2}2S(M)\left(\int\limits_0^\infty|v(s)|^\frac{2M}{M-2}s^{M-1}\right)
^{\frac{M-2}M}\\
&=\left(\frac{\alpha+2}2\right)^{\frac{2M-2}M}S(M)\!\left(\int\limits_0^\infty|u(r)|^\frac{2M}{M-2}r^\frac{M(\alpha+2)-2}2\right)^{\frac{M-2}M}.
\end{align*}
Here $S(M)=M(M-2)\left[\frac{\left(\Gamma(\frac M2)\right)^2}{2\Gamma(M)}\right]^\frac2M$ (see \cite{TA76}).\\
From \eqref{a4} we deduce that $\frac{2M}{M-2}=\frac{2N+2\alpha}{N-2}$ and  $\frac{M(\alpha+2)-2}2=N-1+\alpha$.  So we get
 $$\int\limits_0^\infty\left|u'(r)\right|^2r^{N-1}\ge\left(\frac{\alpha+2}2\right)^\frac{2N-2+\alpha}{N+\alpha}S\left(\frac{2N+2\alpha}{\alpha+2}\right)\left(\int\limits_0^\infty r^\alpha|u(r)|^\frac{2N+2\alpha}{N-2}r^{N-1}\right)^\frac{N-2}{N+\alpha}$$
which proves \eqref{a2} with
$$C(\alpha,N)= \left(\frac{\alpha+2}2\right)^\frac{2N-2+\alpha}{N+\alpha} S\left(\frac{2N+2\alpha}{\alpha+2}\right) \left( \frac{2 \pi^{\frac N 2}}{\Gamma\left( \frac N 2 \right)} \right)^{\frac{2 + \alpha}{N + \alpha}}  $$\\
Moreover, from the previous inequalities, we also get that the extremal functions are obtained as
$$\int\limits_0^\infty\left|v'(s)\right|^2s^{M-1}=S(M)\left(\int\limits_0^\infty|v(s)|^\frac{2M}{M-2}s^{M-1}\right)
^{\frac{M-2}M}$$
It is well-known that $v_\mu(s)=\frac{\mu^\frac{M-2}2}{\left(1+D^2\mu^2s^2\right)^\frac{M-2}2}$ for some positive constant $D$ and for any $\mu\in\R^+$. By \eqref{a4} we get that the functions $U_{\l,\a}(x)=v\left(s^\frac{\alpha+2}2\right)=
\frac {\mu^\frac{N-2}{2+\a}}{\left(1+D^2\mu^2|x|^{2+\a}\right)^\frac{N-2}{2+\a}}$ are extremal for the inequality  \eqref{a2}. Setting $\mu^2=\l^{2+\alpha}$ we have \eqref{a3}.
\end{proof}
Next result is a short proof of the existence and uniqueness result of the radial solution founded by W. M. Ni in \cite{N82}.
\begin{theorem}\label{a5}
Let $B_1$ the unit ball in $\mathbb{R}^N$ with $N\ge3$. Then the problem
\begin{equation}\label{a6}
\left\{\begin{array}{ll}
-\Delta u=|x|^{\a}u^p  & \hbox{ in }B_1\\
u>0& \hbox{ in }B_1\\
u=0& \hbox{ on }\partial B_1
\end{array}\right.
\end{equation}
admits a unique radial solution for $1<p<\frac{N+2+2\a}{N-2}$ and any $\a\ge0$.
\end{theorem}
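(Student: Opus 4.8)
The plan is to apply once more the change of variables $r\mapsto r^{2/(\a+2)}$ underlying the proofs of Theorem \ref{linearized} and Theorem \ref{a1}, which removes the weight $|x|^\a$ at the cost of replacing the dimension $N$ by the parameter $M=\frac{2(N+\a)}{2+\a}>2$ (not necessarily an integer), and then to invoke --- and re-derive, so that nothing uses integrality of $M$ --- the classical existence and uniqueness of the positive radial solution of the \emph{unweighted} subcritical Dirichlet problem in a ball. In radial coordinates \eqref{a6} reads $-(r^{N-1}u')'=r^{N-1+\a}u^p$ on $(0,1)$ with $u'(0)=0$, $u(1)=0$; setting $v(s)=u\big(s^{2/(\a+2)}\big)$ and computing exactly as in the proof of Theorem \ref{a1} gives
\begin{equation*}
-(s^{M-1}v')'=\Big(\tfrac{2}{\a+2}\Big)^2 s^{M-1}v^p \quad\text{in }(0,1),\qquad v'(0)=0,\quad v(1)=0,
\end{equation*}
the radial form of $-\Delta v=\big(\tfrac{2}{\a+2}\big)^2 v^p$ in the unit ball of $\R^M$. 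Since $M-2=\frac{2(N-2)}{\a+2}$ and $M+2=\frac{2(N+2+2\a)}{\a+2}$, the hypothesis $1<p<\frac{N+2+2\a}{N-2}$ is precisely $1<p<\frac{M+2}{M-2}$, and as $u\mapsto v$ is a bijection between radial solutions it suffices to treat the transformed problem.

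For \emph{existence} I would argue variationally: minimise the Dirichlet energy $\int_0^1|v'|^2s^{M-1}\,ds$ over the set of radial functions with $v(1)=0$ and $\int_0^1|v|^{p+1}s^{M-1}\,ds=1$; the relevant weighted Sobolev embedding is compact in the subcritical range $p+1<\frac{2M}{M-2}$, so a minimiser exists, may be taken non-negative, and after a trivial rescaling solves the equation and is strictly positive on $(0,1)$ by the strong maximum principle. For \emph{uniqueness} I would use shooting together with the scaling invariance of the equation. For $d>0$ let $v_d$ be the unique $C^2$ solution of $-(s^{M-1}v')'=c\,s^{M-1}(v_+)^p$, $c=\big(\tfrac{2}{\a+2}\big)^2$, with $v(0)=d$, $v'(0)=0$ (uniqueness via the integral formulation, $t\mapsto(t_+)^p$ being $C^1$ for $p>1$); since $s^{M-1}v_d'$ is strictly decreasing from $0$, $v_d$ is strictly decreasing while positive and has a single first zero $R(d)\in(0,+\infty]$. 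Any positive radial solution of the transformed Dirichlet problem is $v_{v(0)}$ with $R(v(0))=1$; and since the transformation $v(s)\mapsto\lambda^{2/(p-1)}v(\lambda s)$ carries $v_d$ to $v_{\lambda^{2/(p-1)}d}$ and its first zero to $R(d)/\lambda$, we obtain $R(d)=R(1)\,d^{-(p-1)/2}$, so the condition $R(d)=1$ forces $d=R(1)^{2/(p-1)}$: the solution is unique. Undoing the substitution, $u(r)=v\big(r^{(\a+2)/2}\big)$ is then the unique positive radial solution of \eqref{a6}.

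The only real point to watch is that $M$ is generally not an integer; but the radial ODE, the weighted Sobolev embedding and its compactness, the maximum principle, and the scaling identity above all make sense and hold verbatim for real $M>2$, so the argument goes through unchanged. (For integer $M$ this is exactly the classical existence--uniqueness result for $-\Delta v=v^p$ in a ball with $1<p<\frac{M+2}{M-2}$; one could also get existence purely by shooting, the point then being that $v_1$ must vanish at a finite radius when $p<\frac{M+2}{M-2}$ --- equivalently, that a positive finite-energy solution of $-\Delta v=c\,v^p$ on all of $\R^M$ is ruled out by the Pohozaev identity.)
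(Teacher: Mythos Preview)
Your argument is correct and follows the same core idea as the paper: the change of variables $r\mapsto r^{2/(\a+2)}$ converting \eqref{a6} into the unweighted radial problem in ``dimension'' $M=\frac{2(N+\a)}{2+\a}$, with the subcriticality condition becoming $1<p<\frac{M+2}{M-2}$. The existence step is also the same --- compactness of the weighted embedding $\{v:\ \int_0^1|v'|^2s^{M-1}<\infty,\ v(1)=0\}\hookrightarrow L^{p+1}(s^{M-1}ds)$ in the subcritical range --- and both proofs stress that nothing requires $M$ to be an integer.

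The one genuine difference is the uniqueness step. The paper simply invokes the classical ODE result from \cite{GNN79} (Sec.~2.8), remarking that the argument there carries over to non-integer $M$. You instead give a self-contained shooting/scaling argument: the scaling $v(s)\mapsto\lambda^{2/(p-1)}v(\lambda s)$ yields $R(d)=R(1)\,d^{-(p-1)/2}$ for the first zero, forcing a unique $d$ with $R(d)=1$. This is shorter and more elementary than what \cite{GNN79} actually does, and it works for any real $M>2$ without further comment; the price is that finiteness of $R(1)$ is not proved directly but inferred from the variational existence result (which is perfectly legitimate). One small point you skate over and the paper makes explicit: the boundary condition $v'(0)=0$ is not automatic from $u'(0)=0$ when $\a>0$, since $v'(s)=\frac{2}{\a+2}s^{-\a/(\a+2)}u'(s^{2/(\a+2)})$; it holds because the original equation forces $u'(r)=O(r^{\a+1})$ near $r=0$.
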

\begin{proof}
Since $u$ is radial we have that it satisfies,
\begin{equation}\label{a7}
\left\{\begin{array}{ll}
-u''-\frac{N-1}ru'=r^{\a}u^p  & \hbox{ in }(0,1)\\
u>0&  \hbox{ in }(0,1)\\
u'(0)=u(1)=0&
\end{array}\right.
\end{equation}
As in the previous theorems, let us consider the transformation
$$r=s^\frac2{\alpha+2}\quad\hbox{ and }\quad
v(s)=u\left(s^\frac2{\alpha+2}\right).$$
Then \eqref{a7} becomes, setting $M=\frac{2(N+\a)}{2+\a}$
\begin{equation}\label{a8}
\left\{\begin{array}{ll}
 -v''(s) - \frac{M-1}{r}v'(s)=\frac4{(2+\a)^2}v^p\,& \text{ in } (0,1) \\
v>0&  \hbox{ in }(0,1)\\
v'(0)=v(1)=0
\end{array}\right.
\end{equation}
Note that we have $v'(0)=0$ since $v'(s)=\frac{2}{\alpha+2} s^{-\frac\a{\a+2}}u'\left(s^\frac2{\alpha+2}\right)$ and by \eqref{a7} $u'(r)= O(r^{\a+1})$ near $r=0$. Moreover, since $1<p<\frac{N+2+2\a}{N-2}$ we get $1<p<\frac{M+2}{M-2}$.\\
Since the space $H=\left\{u\in H^1(0,1)\hbox{ such that }u(1)=0\right\}$ equipped with the norm $||u||_H^2=\int_0^1|u'(s)|^2s^{M-1}ds$ is compactly embedded in
$L=\{u\in L^{p+1}(0,1)$ such that $\int_0^1|u(s)|^{p+1}s^{M-1}ds\},$
for any $M>0$ and  $1<p<\frac{M+2}{M-2}$, we have the existence of the solution.\\
The ODE \eqref{a8} was considered in \cite{GNN79}, (Sec. 2.8, p. 224) when $1<p<\frac{M+2}{M-2}$ and it was proved that it has a {\em unique} solution.  In \cite{GNN79} only the case $M\in\N$ was considered, but it is easy to see that the same proof applies in the general case $M\in\R$.
So the same uniqueness result holds for problem \eqref{a7} and the claim follows.
\end{proof}

\end{document}